\documentclass[reqno]{amsart}
\usepackage{amsmath}
\usepackage[english]{babel}
\usepackage[utf8]{inputenc}
\usepackage{amsfonts}
\usepackage{amsthm}
\usepackage{color}
\usepackage{graphicx}
\usepackage{dsfont}

\newtheorem{thm}{Theorem}[subsection]

\newtheorem{prop}[thm]{Proposition}
\newtheorem{cor}[thm]{Corollary}
\newtheorem{lem}[thm]{Lemma}
\newtheorem{rmk}[thm]{Remark}
\newtheorem{exmp}{Example}[subsection]
\numberwithin{equation}{section}

\DeclareMathOperator{\E}{\mathds{E}}
\DeclareMathOperator{\N}{\mathbb{N}}
\DeclareMathOperator{\R}{\mathbb{R}}

\DeclareMathOperator{\cL}{\mathcal{L}}
\DeclareMathOperator{\cB}{\mathcal{B}}

\DeclareMathOperator{\fF}{\mathfrak{F}}

\DeclareMathOperator{\fG}{\mathfrak{G}}
\DeclareMathOperator{\fT}{\mathfrak{T}}

\DeclareMathOperator{\bP}{\mathds{P}}

\DeclareMathOperator{\Dom}{Dom}

\newcommand{\der}[2]{\frac{d #1}{d #2}}

\def \l { \left( }
\def \r {\right) }

\newcommand{\bt}[1]{{\textcolor{black}{#1}}}
\newcommand{\bbt}[1]{{\textcolor{black}{#1}}}

\newcommand{\mg}[1]{{\textcolor{black}{#1}}}

\begin{document}
\title{On the Exit Time from Open Sets of Some Semi-Markov Processes}
\author{Giacomo Ascione}
\address{}
\author{Enrica Pirozzi}
\address{}
\author{Bruno Toaldo}
\address{}
\begin{abstract}
\bt{In this paper we characterize the distribution of the first exit time from an arbitrary open set for a class of semi-Markov processes obtained as time-changed Markov processes. We estimate the asymptotic behaviour of the survival function (for large $t$) and of the distribution function (for small $t$) and we provide some conditions for absolute continuity. We have been inspired by a problem of neurophyshiology and our results are particularly usefull in this field, precisely for the so-called Leacky Integrate-and-Fire (LIF) models: the use of semi-Markov processes in these models appear to be realistic under several aspects, e.g., it makes the intertimes between spikes a r.v. with infinite expectation, which is a desiderable property. Hence, after the theoretical part, we provide a LIF model based on semi-Markov processes.}
\end{abstract}
\maketitle
\keywords{}
\tableofcontents
\section{Introduction}
\bt{This paper deals with the problem of studying the distribution of the exit time from an arbitrary open set for a class of semi-Markov processes, constructed as time-changed Markov processes. More precisely, let $M(t)$, $t \geq 0$, be a Markov process and $\sigma(t)$ an independent stricly increasing L\'evy process. Let $L(t)$ be the inverse of $\sigma(t)$ and define $X(t)=M(L(t))$. In recent years this class of processes have attracted the interests of many mathematicians, because of their connection with fractional type equations and since they are very popular in applications (see \cite{meerschaert2011stochastic} for a review) in particular in the field of anomalous diffusive phenomena (e.g. \cite{Metzler}) and many others (see \cite{hairer} for some recent developments). In this paper we consider the following problem. Let $\mathfrak{T}$ be the exit time from an open set of $X$. We study the behaviour as $t \to \infty$ of $P \l \mathfrak{T}>t \r$ and as $t \to 0$ of $P ( \mathfrak{T} \leq t)$. Beside its natural interest as a theoretical question, this problem is inspired by neurophysiology investgations and it turns out that our results are particularly useful in this field, as follows. The stochastic Leaky Integrate-and-Fire models for the membrane potential of a neuron are one of the most popular way to model such dynamics (e.g. \cite{greenwood2016stochastic, Lev15}). However the classical processes used to describe the membrane potential \cite{greenwood2016stochastic, lansky, Lev15, sacric} are such that the first passage time \mg{through} the threshold, upon which the neuron fire, is a r.v. with finite expectation. This is in contrast with the observed behaviour (see, for instance, \cite{gerstein1964random}) since the distribution of the intervals between spikes appears to be heavy tailed. Further, phenomenological evidences such as high variability in the neuronal response to stimulations and the adaptation phenomenon, cannot be explained by models based on Markovian processes, but the introduction of memory seems to be a suitable and powerful tool for modeling such dynamics (see again, for instance, \cite{Lev15} and references therein). Hence, we propose in Section \ref{secmodel} a model based on semi-Markov processes, constructed as above, leading to distributions whose survival function has a $\alpha$-power law decay, $\alpha \in (0,1)$.}

\bt{Therefore, we first accomplish to the theoretical task of characterizing the distribution of the first exit time from an open set for the considered class of semi-Markov processes. For example, it turns out that the behaviour of the tail is a $\alpha$-power law, $\alpha \in (0,1)$, in case $\sigma$ is an
 $\alpha$-stable subordinator and if the function $s \mapsto \mathds{E} \left[ 1-e^{-sT} \right]$, where $T$ is the first exit time of the original Markov process, is regularly varying.
Then, we focus on the situation in which the original Markov process is a Gauss-Markov process, since this kind of processes are usually adopted in LIF models, and we show when they satisfies the condition needed to use our estimates. It turns out that this can be done by means of Doob transformation Theorem.}

\bt{Another feature of our model is that it is a reparametrization of the original one (before the time-change). For example: suppose that the model is obtained by the time-change of a Markov process such that $T$ is its exit time from the open set and $\mathds{E}T=C<\infty$. Suppose that the time-changed process $X$ is obtained with the inverse of an $\alpha$-stable subordinator. Then the tail behaviour of the exit time is $Ct^{-\alpha}/\Gamma (1-\alpha)$ and the parameters $C$ and $\alpha$ can be observed directly by observing the spikes. To highlight this and as a confirmation of our results, in the last sections we provide a method to simulate our processes.}

\section{\bt{The exit time}}\label{sec:main}
\bt{In this section we} study the asymptotic behaviour of the distribution functions of the first passage times of semi-Markov processes obtained by means of a time change from a Markov process.
\subsection{\bt{Construction of the process and general assumptions}}
Let us consider a Markov process $M=\{M(y), \ y \ge 0\}$ with state space $(\Sigma, \mathfrak{G})$, conditional probability laws $(\bP^x)_{x \in \Sigma}$ and infinitesimal generator $G_M$. Let us consider also a subordinator $\sigma=\{\sigma(y), \ y \ge 0\}$ independent on $M$, that is to say a non-decreasing L\'{e}vy process. In particular $\sigma$ has state space $([0,+\infty),\cB[0,+\infty))$ and
\begin{equation}\label{eq:lapexp}
\E^x[e^{-\lambda \sigma(y)}]=e^{-yf(\lambda)}
\end{equation}
where $f(\lambda)$ is a Bernstein function
\begin{equation}\label{eq:Bernfun}
f(\lambda)=\int_0^{+\infty}(1-e^{-\lambda s})\nu(ds).
\end{equation}
\bbt{The measure $\nu(\cdot)$ is} the L\'{e}vy measure of $\sigma$ \bbt{and must} fulfills the integrability condition
\begin{equation}\label{eq:integcond}
\int_0^{+\infty}(s \wedge 1)\nu(ds)<\infty.
\end{equation}
In what follows we will always assume that $\nu(0,+\infty)=+\infty$ to assure that the subordinator is a strictly increasing process, \bt{even if we will always assume that there is no drift}. Let us also define the time-changed process $X^f=\{X^f(t), t \ge 0\}$ as $X^f(t)=(M \circ L)(t)$ where
\begin{equation}\label{eq:invsub}
L(t)=\inf\{y \ge 0 : \, \sigma(y)>t\}
\end{equation}
that is called inverse subordinator since it is the right-continuous inverse of $\sigma$. It is known in \cite{baeumer2001stochastic} that the process $X^f$ is governed by a time-fractional equation when $f(\lambda)=\lambda^\alpha$. Hence it is such that the function $q(x,t):=\E^x[u(X^f(t))]$ satisfies
\begin{equation}\label{eq:fraceqforXf}
\partial_t^\alpha q=G_Mq, \quad q(x,0)=u(x) \in \Dom(G_M),
\end{equation}
where $G_M$ is the generator of $M$ and $\partial_t^\alpha$ is the \bt{fractional derivative of order $\alpha \in (0,1)$,}
\begin{equation}\label{eq:Caputoder}
\partial_t^\alpha u:=\frac{1}{\Gamma(1-\alpha)}\der{}{t}\int_0^tu(s)(t-s)^{-\alpha}ds-u(0)\frac{t^{-\alpha}}{\Gamma(1-\alpha)}.
\end{equation}
This relationship has then been generalized to a general subordinator with Laplace exponent $f(\lambda)$ \mg{in} different ways \cite{chen2017time,hernandez2017generalised,kolokoltsov2009generalized,magdziarz2015asymptotic,meerschaert2008triangular,orsingher2016time,orsingher2018semi,toaldo2015levy}. In particular in \cite{chen2017time} the author proved that, when $f$ is a general Bernstein function having representation \eqref{eq:Bernfun}, then the function $q(x,t)=\E^x[u(X^f(t))]$ satisfies:
\begin{equation}\label{eq:Geneq}
\partial_t\int_0^t(\mg{q(x,s)-q(x,0)})\overline{\nu}(t-s)ds=G_Mq(x,t), \quad q(x,0)=u(x)\in \Dom(G_M)
\end{equation}
where $\overline{\nu}(t)=\nu(t,\infty)$. Further he proved that the occupation measure of $X^f$ is always infinite when the subordinator has infinite expectation. We further observe that when $M$ has the strong Markov property then the process $X^f$ is a semi-Markov process in the sense of \cite[Section 4b]{cinlar1974markov} or \cite{meerschaert2014semi}, i.e. it is not Markovian but it enjoys the Markov property at any random time $\mathcal{T}$ such that \mg{$\mathcal{T}(\omega)\in \{s: \sigma(y,\omega)=s \mbox{ for some }y\}$}.
\subsection{Asymptotic behaviour of the tail}
\bt{In this section we provide} an estimate of the tail of the distribution of the first exit time from an open set of the time-changed process $X^f$. Remark that if $M(y)$ and $\sigma(y)$ are self-similar, $M(y)$ in defined in $\R$ and we consider as particular open set the interval $(0,+\infty)$, explicit results on such distribution are given in \cite{loeffen}.\\
\bt{Let's first introduce the following notation which will be used all throughout the paper. Reserve $S \in \fG$ for an arbitrary open set and define
\begin{equation}\label{eq:firstpasstime}
T:=\inf\{y\ge 0: \ M(y)\not \in S\}
\end{equation}
and
\begin{equation}
\fT:=\inf\{t \ge 0: \ X^f(t)\not \in S\}.
\end{equation}
Further, in order to avoid trivialities, in what follows the results will be always stated for $x$ such that
\begin{align}
P^x(T>0)>0.
\end{align}
In the forthcoming proofs we will make use of the following easy technical lemma.}
\begin{lem}\label{lem:teclem1}
Let $\mathcal{T}$ be a non-negative random variable, $\lambda>0$ and $\mathbf{e}_\lambda$ an exponential random variable of parameter $\lambda$ which is independent from $\mathcal{T}$. Then:
\begin{equation}\label{eq:teclem1}
\bP^x(\mathcal{T}>\mathbf{e}_\lambda)=\E^x[1-e^{-\lambda \mathcal{T}}].
\end{equation}
\end{lem}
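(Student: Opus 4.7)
The plan is to compute $\bP^x(\mathcal{T} > \mathbf{e}_\lambda)$ by conditioning on $\mathcal{T}$ and exploiting the independence of $\mathcal{T}$ and $\mathbf{e}_\lambda$. The key observation is that, given $\mathcal{T} = t$, the event $\{\mathcal{T} > \mathbf{e}_\lambda\}$ reduces to $\{\mathbf{e}_\lambda < t\}$, whose probability is precisely the distribution function of an exponential, namely $1 - e^{-\lambda t}$.

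Concretely, I would first write
\begin{equation*}
\bP^x(\mathcal{T} > \mathbf{e}_\lambda) = \E^x\!\left[\bP^x(\mathbf{e}_\lambda < \mathcal{T} \mid \mathcal{T})\right],
\end{equation*}
which is justified by the tower property. Using the assumed independence of $\mathbf{e}_\lambda$ from $\mathcal{T}$ (and from the $\sigma$-field on which $\bP^x$ depends), the inner conditional probability equals $F_{\mathbf{e}_\lambda}(\mathcal{T}) = 1 - e^{-\lambda \mathcal{T}}$ almost surely. Taking the outer expectation then yields $\E^x[1 - e^{-\lambda \mathcal{T}}]$, as claimed.

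Alternatively, one can carry out the computation via Fubini/Tonelli on the product measure: writing
\begin{equation*}
\bP^x(\mathcal{T} > \mathbf{e}_\lambda) = \int_0^{+\infty}\int_0^{+\infty} \mathds{1}_{\{t > s\}} \lambda e^{-\lambda s}\, ds\, \bP^x\!\circ\mathcal{T}^{-1}(dt) = \int_0^{+\infty}(1-e^{-\lambda t})\,\bP^x\!\circ\mathcal{T}^{-1}(dt),
\end{equation*}
which is exactly $\E^x[1 - e^{-\lambda \mathcal{T}}]$. There is no substantive obstacle here; the only point one must be careful about is recording explicitly that $\mathbf{e}_\lambda$ is defined on the same enlarged probability space as $\mathcal{T}$ and independent of it under every $\bP^x$, so that the conditioning or the product-measure argument is legitimate uniformly in the starting point $x$.
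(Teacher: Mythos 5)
Your proof is correct. It does, however, take a slightly different (and arguably cleaner) route than the paper's. The paper starts from the right-hand side $\E^x[1-e^{-\lambda \mathcal{T}}]$, expands it as a Lebesgue--Stieltjes integral $\int_0^\infty (1-e^{-\lambda t})\,d\bP^x(\mathcal{T}<t)$, integrates by parts to obtain $\int_0^\infty \lambda e^{-\lambda t}\,\bP^x(\mathcal{T}>t)\,dt$, and only then recognizes this last integral as $\bP^x(\mathcal{T}>\mathbf{e}_\lambda)$ by disintegrating over the value of $\mathbf{e}_\lambda$. You instead start from the left-hand side, condition on $\mathcal{T}$, and plug in the exponential distribution function directly, which gets you to the answer in one step and avoids the integration by parts; your Fubini variant is the same computation written as an iterated integral over the product measure. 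Both are valid and short; yours swaps which variable one conditions on and dispenses with the boundary-term bookkeeping. Your closing remark about $\mathbf{e}_\lambda$ living on an enlarged space independent of everything under each $\bP^x$ is a reasonable precaution, though the paper treats it as implicit.
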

\begin{proof}
We only need to observe that:
\begin{align*}
\E^x[1-e^{-\lambda \mathcal{T}}]&=\int_0^{+\infty}(1-e^{-\lambda t})d\bP^x(\mathcal{T}<t)\\&=[(1-e^{-\lambda t})\bP^x(\mathcal{T}<t)]_0^{+\infty}-\int_0^{+\infty}\lambda e^{-\lambda t}\bP^x(\mathcal{T}<t)dt\\&=1-\int_0^{+\infty}\lambda e^{-\lambda t}(1-\bP^x(\mathcal{T}>t))dt\\&=\int_0^{+\infty}\lambda e^{-\lambda t}\bP^x(\mathcal{T}>t)dt=\bP^x(\mathcal{T}>\mathbf{e}_\lambda).
\end{align*}
\end{proof}
By using this lemma, we can show the following result.
\begin{thm}\label{thm:asymbeh}
\bt{Let $x \in S$ be such that} the function $g(s):=\E^x[1-e^{-sT}]$ is regularly varying at zero with index $\beta \in [0,1]$ and $f$ is regularly varying at zero with index $\alpha \in [0,1)$, then $t \mapsto \bP^x(\fT>t)$ varies regularly at infinity with index $\alpha\beta$ and
\begin{equation}\label{eq:asympbeh}
\bP^x(\fT>t)\sim \frac{1}{\Gamma(1-\alpha\beta)}g\left(f\left(\frac{1}{t}\right)\right).
\end{equation}
\end{thm}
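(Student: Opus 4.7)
The plan is to compute the Laplace transform of the survival function $t\mapsto\bP^x(\fT>t)$ in closed form in terms of $g$ and $f$, and then invert it asymptotically via Karamata's Tauberian theorem combined with the monotone density theorem.

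The starting point is the pathwise identity $\{\fT>t\}=\{T>L(t)\}$. This follows from $X^f=M\circ L$ together with the fact that $L$ is continuous and non-decreasing (because $\sigma$ is strictly increasing under the standing hypothesis $\nu(0,+\infty)=+\infty$), so the image $\{L(s):s\in[0,t]\}$ is the whole interval $[0,L(t)]$. Hence $X^f$ stays in $S$ throughout $[0,t]$ precisely when $M$ has not yet exited $S$ by time $L(t)$.

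Next, let $\mathbf{e}_\lambda$ be an exponential random variable of rate $\lambda>0$ independent of everything else. From \eqref{eq:lapexp},
\[
\bP(L(\mathbf{e}_\lambda)>y)=\bP(\sigma(y)<\mathbf{e}_\lambda)=\E[e^{-\lambda\sigma(y)}]=e^{-yf(\lambda)},
\]
so $L(\mathbf{e}_\lambda)$ is exponentially distributed with parameter $f(\lambda)$, and by the independence of $\mathbf{e}_\lambda$ from $\sigma$ and of $\sigma$ from $M$, it is also independent of $T$. Applying Lemma \ref{lem:teclem1} twice and using the identity of the previous paragraph,
\[
\lambda\int_0^{+\infty}e^{-\lambda t}\bP^x(\fT>t)\,dt=\bP^x(\fT>\mathbf{e}_\lambda)=\bP^x(T>L(\mathbf{e}_\lambda))=\E^x[1-e^{-f(\lambda)T}]=g(f(\lambda)).
\]

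Finally, I would invoke the Tauberian machinery. By the uniform convergence theorem for regularly varying functions, the composition $g\circ f$ is regularly varying at $0$ with index $\alpha\beta\in[0,1)$, so we may write $g(f(\lambda))=\lambda^{\alpha\beta}\ell(1/\lambda)$ with $\ell$ slowly varying at infinity. The Laplace--Stieltjes transform of the non-decreasing function $F(t):=\int_0^t\bP^x(\fT>s)\,ds$ is precisely $g(f(\lambda))/\lambda\sim\lambda^{-(1-\alpha\beta)}\ell(1/\lambda)$, and Karamata's Tauberian theorem therefore gives $F(t)\sim t^{1-\alpha\beta}\ell(t)/\Gamma(2-\alpha\beta)$. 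Since $F'(t)=\bP^x(\fT>t)$ is monotone non-increasing, the monotone density theorem of Karamata then yields
\[
\bP^x(\fT>t)\sim\frac{t^{-\alpha\beta}\ell(t)}{\Gamma(1-\alpha\beta)}=\frac{g(f(1/t))}{\Gamma(1-\alpha\beta)},
\]
which is exactly \eqref{eq:asympbeh}. The main technical point is establishing the regular variation of the composition $g\circ f$ in the boundary cases where $\alpha$ or $\beta$ vanishes; this is standard via the representation theorem for regularly varying functions together with the fact that $f(\lambda)\to 0$ as $\lambda\to 0$. All other steps amount to routine applications of well-known Tauberian results, which are legitimate because $\alpha\beta<1$ is guaranteed by the assumption $\alpha\in[0,1)$.
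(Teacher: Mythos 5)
Your argument is correct and rests on the same backbone as the paper's proof: Laplace-transform the survival function, identify the transform as $g(f(\lambda))/\lambda$, then apply Karamata's Tauberian theorem followed by the monotone density theorem. Where you differ is in how you reach the identity $\lambda\int_0^\infty e^{-\lambda t}\bP^x(\fT>t)\,dt=g(f(\lambda))$, and your route is cleaner. The paper first establishes $\fT=\sigma(T-)$ pathwise, then invokes the absence of fixed discontinuities of $\sigma$ (Applebaum) to replace $\sigma(T-)$ by $\sigma(T)$ in distribution, and finally inserts the explicit Lebesgue density of $L(t)$ together with its $t$-Laplace transform $\frac{f(\lambda)}{\lambda}e^{-sf(\lambda)}$ from Meerschaert--Scheffler. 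You instead observe directly that continuity of $L$ (a consequence of $\nu(0,\infty)=\infty$) makes $\{L(s):s\in[0,t]\}=[0,L(t)]$, so $\{\fT>t\}=\{T>L(t)\}$ holds pathwise, and you note that $L(\mathbf{e}_\lambda)$ is $\mathrm{Exp}(f(\lambda))$-distributed and independent of $T$, so that Lemma~\ref{lem:teclem1} applied twice immediately yields the key transform identity. This avoids two external citations at the cost of implicitly assuming right-continuity of the paths of $M$ and openness of $S$, both of which are standing assumptions in the paper, so there is no gap. One small caveat: your pathwise equivalence $\{\fT>t\}=\{T>L(t)\}$ uses that if $M$ has not exited the open set $S$ by time $L(t)$, then $T$ is \emph{strictly} greater than $L(t)$; this requires right-continuity and should be flagged. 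Your remark about regular variation of $g\circ f$ in the boundary cases is well taken and matches the paper's invocation of \cite[Proposition 1.5.7]{bingham1989regular}.
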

\begin{proof}
Let us first note that $X^f$ can be expressed equivalently as
\begin{equation}\label{eq:diffexp}
X^f(t)=M(y), \quad \sigma(y-)\le t < \sigma(y).
\end{equation}
Hence we have that, on any path, $\fT=\sigma(T-)$. But \bt{we know by \cite[Lemma 2.3.2]{applebaum2009levy} that $\sigma$ has no fixed discontinuities, i.e., for any fixed $t >0$ it is true that $\sigma(t-)=\sigma(t)$ a.s. and thus we can write by a conditioning argument
\begin{equation}\label{eq:thmasymbehpass1}
\bP^x(\fT>y)=\mathds{E}^x\bP^x(\sigma(T-)>y|T)=\mathds{E}^x\bP^x(\sigma(T)>y|T) = \bP^x(\sigma(T)>y).
\end{equation}}
Furthermore by definition of $L$ we have that $L(\sigma(t))=t$ a.s. and thus we can rewrite \eqref{eq:thmasymbehpass1} as
\begin{equation}\label{eq:thmasymbehpass2}
\bP^x(\fT>y)=\bP^x(T>L(y)).
\end{equation}
Now let
\begin{equation}\label{eq:thmasymbehU}
U(t):=\int_0^t\bP^x(\fT>y)dy
\end{equation}
and
\begin{equation}\label{eq:thmasymbehtilU}
\widetilde{U}(\lambda):=\int_0^{+\infty}e^{-\lambda t}U(dt).
\end{equation}
Let $\cL(t)$ be a slowly varying function at infinity. The Karamata's Tauberian Theorem \cite[Thm XIII.5.2]{feller1968introduction} states that the relations
\begin{equation}\label{eq:thmasymbehpass3}
U(t)\sim \frac{t^\rho}{\Gamma(1+\rho)}\cL(t) \mbox{ as }t \to +\infty
\end{equation}
and
\begin{equation}\label{eq:thmasymbehpass4}
\widetilde{U}(\lambda)\sim \lambda^{-\rho}\cL(1/\lambda) \mbox{ as }\lambda \to 0
\end{equation}
imply each other. Now we need to determine the relation \eqref{eq:thmasymbehpass4} for \eqref{eq:thmasymbehtilU}. By using \eqref{eq:thmasymbehpass2} we find that
\begin{align}\label{eq:thmasymbehpass5}
\begin{split}
\widetilde{U}(\lambda)&=\int_0^\infty e^{-\lambda t}U(dt)\\&=\int_0^\infty e^{-\lambda t}\int_0^{\infty}\bP^x(T>w)\bP^x(L(t)\in dw)dt.
\end{split}
\end{align}
By \cite[eq (3.13)]{meerschaert2008triangular} we further have that $L(t)$ has a Lebesgue density $s \mapsto l(s,t)$ such that
\begin{equation}\label{eq:thmasymbehpass6}
\int_0^{\infty}e^{-\lambda t}l(s,t)dt=\frac{f(\lambda)}{\lambda}e^{-sf(\lambda)}.
\end{equation}
Hence we can write
\begin{equation}\label{eq:thmasymbehpass7}
\widetilde{U}(\lambda)=\frac{1}{\lambda}\int_0^{\infty}f(\lambda)e^{-yf(\lambda)}\bP^x(T>y)dy.
\end{equation}
Consider now an exponential random variable $\mathbf{e}_{f(\lambda)}$ of parameter $f(\lambda)$ and independent of $T$. Thus we have
\begin{equation}\label{eq:thmasymbehpass8}
\widetilde{U}(\lambda)=\frac{1}{\lambda}\bP^x(T>\mathbf{e}_{f(\lambda)}).
\end{equation}
Then, by using Lemma \ref{lem:teclem1} we have
\begin{equation}\label{eq:thmasymbehpass9}
\widetilde{U}(\lambda)=\frac{1}{\lambda}\E^x[1-e^{-f(\lambda)T}]=\frac{1}{\lambda}g(f(\lambda)).
\end{equation}
Now let us observe that, by hypotheses, $g$ is regularly varying at $0$ with index $\beta \in [0,1]$ and $f$ is regularly varying at $0$ with index $\alpha \in [0,1)$, so $g \circ f$ is regularly varying at $0$ with index $\alpha \beta \in [0,1)$ \bt{by an application of \cite[Proposition 1.5.7]{bingham1989regular}}. Thus there exists a function $\cL$ which is slowly varying at $0$ such that
\begin{equation}\label{eq:thmasymbehpass10}
g(f(\lambda))=\lambda^{\alpha\beta}\cL(\lambda)
\end{equation}
and thus Eq. \eqref{eq:thmasymbehpass9} becomes
\begin{equation}\label{eq:thmasymbehpass11}
\widetilde{U}(\lambda)=\lambda^{\alpha\beta-1}\cL(\lambda).
\end{equation}
By Karamata's Tauberian theorem we mentioned before we have as $t \to \infty$
\begin{equation}\label{eq:thmasymbehpass12}
U(t)\sim \frac{t^{1-\alpha\beta}}{\Gamma(2-\alpha\beta)}\cL(1/t).
\end{equation}
Applying then the Monotone Density Theorem \cite[Thm 1.7.2]{bingham1989regular} we have as $t \to \infty$
\begin{equation}\label{eq:thmasymbehpass13}
\bP^x(\fT>t)\sim \frac{1-\alpha\beta}{\Gamma(2-\alpha\beta)}t^{-\alpha\beta}\cL(1/t)=\frac{1}{\Gamma(1-\alpha\beta)}g(f(1/t))
\end{equation}
where we used Eq. \eqref{eq:thmasymbehpass10} and the fact that $z\Gamma(z)=\Gamma(z+1)$.
\end{proof}
Since checking that $g(s)=\E^x[1-e^{-sT}]$ is regularly varying \bt{may be a difficult task}, we propose the following corollary.
\begin{cor}\label{cor:corasymbeh}
If, for some $x \in S$, $\E^x[T]=C<+\infty$ and $f$ is regularly varying at zero with index $\alpha \in [0,1)$, then $t \mapsto \bP^x(\fT>t)$ varies regularly at infinity and
\begin{equation}\label{eq:asympwithfinmean}
\bP^x(\fT>t)\sim \frac{C}{\Gamma(1-\alpha)}f\left(\frac{1}{t}\right).
\end{equation}
\end{cor}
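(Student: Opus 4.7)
The plan is to reduce Corollary to the preceding theorem by verifying the hypothesis on $g(s):=\mathds{E}^x[1-e^{-sT}]$ with index $\beta=1$, and then to identify the resulting leading constant.

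First I would show that under $\mathds{E}^x[T]=C<\infty$, one has $g(s)\sim Cs$ as $s\to 0^+$, i.e., $g$ is regularly varying at $0$ with index $\beta=1$. To do this I would consider the pointwise limit
\begin{equation*}
\lim_{s\to 0^+}\frac{1-e^{-sT(\omega)}}{s}=T(\omega),
\end{equation*}
together with the elementary bound $1-e^{-sT}\le sT$ (valid since $1-e^{-x}\le x$ for $x\ge 0$), which provides the integrable dominating variable $T$ thanks to $\mathds{E}^x[T]=C<\infty$. Dominated convergence then gives $g(s)/s\to C$, so $g(s)=s\,\ell(s)$ with $\ell(s)\to C$ slowly varying at $0$.

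Next, since $f$ is regularly varying at $0$ with index $\alpha\in[0,1)$ by hypothesis and $g$ is regularly varying at $0$ with index $\beta=1$, all the hypotheses of Theorem~\ref{thm:asymbeh} are met. Applying it, I obtain $\mathds{P}^x(\mathfrak{T}>t)\sim \Gamma(1-\alpha\beta)^{-1}g(f(1/t))$ as $t\to\infty$, with $\alpha\beta=\alpha$. Since $f(1/t)\to 0$ as $t\to\infty$ and $g(u)\sim Cu$ as $u\to 0^+$, substituting $u=f(1/t)$ yields $g(f(1/t))\sim C\,f(1/t)$, hence
\begin{equation*}
\mathds{P}^x(\mathfrak{T}>t)\sim \frac{C}{\Gamma(1-\alpha)}\,f\!\left(\frac{1}{t}\right),
\end{equation*}
which is the desired asymptotic. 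The regular variation at infinity of $t\mapsto \mathds{P}^x(\mathfrak{T}>t)$ follows from that of $u\mapsto f(u)$ at $0$ by the change of variable $u=1/t$.

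There is no real obstacle here: the only mildly delicate point is the interchange of limit and expectation in establishing $g(s)\sim Cs$, and that is handled by the uniform bound $1-e^{-sT}\le sT$, which is integrable precisely because of the finite first moment assumption. The rest is routine substitution into Theorem~\ref{thm:asymbeh}.
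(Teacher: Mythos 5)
Your proof is correct and follows essentially the same route as the paper's: establish $g(s)\sim Cs$ via the bound $1-e^{-sT}\le sT$ and dominated convergence, deduce that $g$ is regularly varying at $0$ with index $1$, apply Theorem~\ref{thm:asymbeh} with $\beta=1$, and then substitute $g(f(1/t))\sim Cf(1/t)$.
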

\begin{proof}
First let us observe that
\begin{equation*}
1-e^{-sT}\le sT
\end{equation*}
for any $s \in \R$. Thus we have
\begin{equation*}
\frac{1-e^{-sT}}{s}\le T
\end{equation*}
for any $s \in \R$. Moreover \bt{since we assumed} that $T$ is an integrable random variable \bt{we have by dominated convergence that}
\begin{equation*}
\lim_{s \to 0}\frac{g(s)}{s}=\lim_{s \to 0}\E^{x}\left[\frac{1-e^{-sT}}{s}\right]=\E^{x}\left[\lim_{s \to 0}\frac{1-e^{-sT}}{s}\right]=\E^x[T]=C
\end{equation*}
\bt{from which we get}
\begin{equation*}
g(s)\sim Cs.
\end{equation*}
Thus $g(s)$ is regularly varying at $0$ with index $1$ and then we can use Theorem \ref{thm:asymbeh} \bt{to say that}
\begin{equation*}
\bP(\fT>t)\sim \frac{1}{\Gamma(1-\alpha)}g\left(f\left(\frac{1}{t}\right)\right).
\end{equation*}
Finally let us observe that $g\left(f\left(\frac{1}{t}\right)\right)\sim Cf\left(\frac{1}{t}\right)$ to obtain Eq. \eqref{eq:asympwithfinmean}.
\end{proof}
Let us \bt{see some instructive} examples.
\begin{exmp}\label{ex:BMd}
Consider $W_{\delta}(t)=W(t)+\delta t$ a $1$-dimensional Wiener process with positive drift \bt{$\delta>0$} (where $W(t)$ is a standard Wiener process) and the open set $S=(-\infty,c)$ for $c>0$. Consider $T:=\inf\{t \ge 0: \ W_\delta(t)\not \in S\}$ and observe that $T$ is absolutely continuous with probability density function $p_T(t)\bt{dt}=\bP^0(T \in dt)$ given by \bt{(e.g. \cite[eq. 2.0.2, pag 295]{BMHB})}
\begin{equation}
p_T(t)=\frac{c}{\sqrt{2\pi}}\frac{e^{-\frac{(c-\delta t)^2}{2t}}}{t^{\frac{3}{2}}}1_{(0,+\infty)}(t).
\end{equation}
It is \bt{well-known that $\E^0[T]=\frac{c}{\delta}<+\infty$}. Consider then $W_\delta^f(t):=W_\delta(L(t))$ and $\fT:=\inf\{t \ge 0: \ W_\delta^f(t) \not \in S\}$. Thus, by Corollary \ref{cor:corasymbeh}, if $f$ is regularly at zero varying with index $\alpha \in (0,1)$ we know that
\begin{equation}
\bP^0(\fT > t)\sim \frac{c}{\delta\Gamma(1-\alpha)}f\left(\frac{1}{t}\right).
\end{equation}
\end{exmp}
\begin{exmp}\label{ex:BM}
Consider $W(t)$ a $1$-dimensional standard Wiener process and the open set $S=(-\infty,c)$ for $c>0$. Consider $T:=\inf\{t \ge 0: W(t)\not \in S\}$ and observe that $T$ is absolutely continuous with probability density function $p_T(t)\bt{dt}=\bP^0(T \in dt)$ given by \bt{(e.g. \cite[eq. 2.0.2 pag. 198]{BMHB})}
\begin{equation}
p_T(t)=\frac{c}{\sqrt{2\pi}}\frac{e^{-\frac{c^2}{2t}}}{t^{\frac{3}{2}}}.
\end{equation}
In this case $\E^0[T]=+\infty$ so we cannot use Corollary \ref{cor:corasymbeh}. Thus we want to study the function $g(s)=\E^0[1-e^{-sT}]$. To do this, let us introduce a Lévy subordinator $\tau(t)$, that is to say a $1/2$-stable subordinator, with probability density function
\begin{equation}
p_{\tau(t)}(\lambda)=\frac{t}{2\sqrt{\pi}}\frac{e^{-\frac{t^2}{4\lambda}}}{\lambda^{\frac{3}{2}}}.
\end{equation}
For this process we know that
\begin{equation}
e^{-t\sqrt{s}}=\E[e^{-s\tau(t)}]=\int_0^\infty e^{-s\lambda}\frac{t}{2\sqrt{\pi}}\frac{e^{-\frac{t^2}{4\lambda}}}{\lambda^{\frac{3}{2}}}d\lambda.
\end{equation}
Thus, let us observe that, by using the change of variable $x=2y$
\begin{align}
\begin{split}
\E[e^{-sT}]&=\int_0^\infty e^{-sx}\frac{c}{\sqrt{2\pi}}\frac{e^{-\frac{c^2}{2x}}}{x^{\frac{3}{2}}}dx\\&=\int_0^\infty e^{-2sy}\frac{c}{2\sqrt{\pi}}\frac{e^{-\frac{c^2}{4y}}}{y^{\frac{3}{2}}}dy=\E[e^{-2s\tau(c)}]=e^{-c\sqrt{2s}}.
\end{split}
\end{align}
Then we have
\begin{equation}
g(s)=\E[1-e^{-sT}]=1-e^{-c\sqrt{2s}}
\end{equation}
which is a regularly varying function at $0^+$ with index $1/2$. Consider now $W^f(t):=W(L(t))$ and $\fT:=\inf\{t \ge 0: \ W^f(t)\not \in S\}$. Thus, by Theorem \ref{thm:asymbeh}, if $f$ is regularly varying at zero with index $\alpha\in (0,1)$ we know that
\begin{equation}
\bP^0(\fT>t)\sim \frac{1}{\Gamma\left(1-\frac{\alpha}{2}\right)}\left[1-e^{-c\sqrt{2f(1/t)}}\right].
\end{equation}
\end{exmp}
The following proposition shows a particular case of Theorem \ref{thm:asymbeh} in which the distribution of $\fT$ can be computed explicitly.
\begin{prop}\label{eq:exptoML}
Let $f(\lambda)=\lambda^\alpha$. If $\bP^x(T>y)=e^{-hy}$ for some $h \ge 0$ then we have
\begin{equation}\label{eq:fptstablesub}
\bP(\fT>t)=E_\alpha(-ht^\alpha):=\sum_{k=0}^{\infty}\frac{(-ht^\alpha)^k}{\Gamma(\alpha k+1)}.
\end{equation}
Furthermore
\begin{equation}\label{eq:fptasympstablesub}
\bP^x(\fT>t)\sim \frac{1}{h}\frac{t^{-\alpha}}{\Gamma(1-\alpha)}
\end{equation}
as $t \to +\infty$.
\end{prop}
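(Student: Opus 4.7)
The plan is to compute everything explicitly via Laplace transforms, exploiting the intermediate computation already performed in the proof of Theorem \ref{thm:asymbeh}.

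First, I would specialize the identity \eqref{eq:thmasymbehpass9}, which reads $\widetilde{U}(\lambda)=\frac{1}{\lambda}g(f(\lambda))$ with $U(t)=\int_0^t\bP^x(\fT>y)dy$ and $g(s)=\mathds{E}^x[1-e^{-sT}]$. Since $T$ is exponentially distributed with rate $h$, a direct computation gives
\begin{equation*}
g(s) \;=\; 1-\frac{h}{h+s} \;=\; \frac{s}{h+s},
\end{equation*}
so with $f(\lambda)=\lambda^\alpha$ one obtains
\begin{equation*}
\int_0^{+\infty}e^{-\lambda t}\bP^x(\fT>t)dt \;=\; \widetilde{U}(\lambda) \;=\; \frac{\lambda^{\alpha-1}}{h+\lambda^\alpha}.
\end{equation*}

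Next, I would recognize the right-hand side as the Laplace transform of the one-parameter Mittag--Leffler function: it is classical that $\int_0^{+\infty}e^{-\lambda t}E_\alpha(-ht^\alpha)dt=\lambda^{\alpha-1}/(h+\lambda^\alpha)$ (this follows by termwise Laplace inversion of the series defining $E_\alpha$). Uniqueness of Laplace transforms, applied to the continuous function $t\mapsto\bP^x(\fT>t)$ and the continuous function $t\mapsto E_\alpha(-ht^\alpha)$, then yields \eqref{eq:fptstablesub}.

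For the tail asymptotics \eqref{eq:fptasympstablesub}, the cleanest route is to invoke Theorem \ref{thm:asymbeh} directly: the function $g(s)=s/(h+s)$ satisfies $g(s)\sim s/h$ as $s\to 0$, hence it is regularly varying at zero with index $\beta=1$, and $f(\lambda)=\lambda^\alpha$ is regularly varying at zero with index $\alpha\in(0,1)$. The theorem gives
\begin{equation*}
\bP^x(\fT>t) \;\sim\; \frac{1}{\Gamma(1-\alpha)}g\!\left(f(1/t)\right) \;=\; \frac{1}{\Gamma(1-\alpha)}\frac{t^{-\alpha}}{h+t^{-\alpha}} \;\sim\; \frac{1}{h}\,\frac{t^{-\alpha}}{\Gamma(1-\alpha)}.
\end{equation*}
Alternatively, one could derive the same estimate from the standard asymptotic expansion of $E_\alpha$ at infinity.

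There is essentially no hard step: the only subtlety is recognising the Mittag--Leffler Laplace transform, which is a textbook fact. The argument is otherwise a plug-and-play specialisation of the machinery already developed in Theorem \ref{thm:asymbeh}.
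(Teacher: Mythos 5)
Your proof is correct and reaches the same conclusions, but the derivation of the first identity \eqref{eq:fptstablesub} follows a genuinely different (and more self-contained) route. The paper starts from \eqref{eq:thmasymbehpass2}, writes $\bP^x(\fT>t)=\int_0^\infty e^{-hy}\,\bP^x(L(t)\in dy)=\E^x[e^{-hL(t)}]$, and then cites \cite{bingham1971limit} for the known fact that the Laplace transform in $y$ of the inverse $\alpha$-stable subordinator's law gives the Mittag--Leffler function. You instead specialize the already-established identity \eqref{eq:thmasymbehpass9} to obtain $\widetilde U(\lambda)=\lambda^{\alpha-1}/(h+\lambda^\alpha)$, recognize this as the $t$-Laplace transform of $E_\alpha(-h t^\alpha)$, and conclude by uniqueness (which is legitimate here since $t\mapsto\bP^x(\fT>t)=\E[e^{-hL(t)}]$ is continuous by dominated convergence, $L$ being a.s.\ continuous, and $E_\alpha(-ht^\alpha)$ is clearly continuous). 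Your version trades the external citation for the textbook Mittag--Leffler Laplace transform, making the argument more explicitly computational; both are equally rigorous. For the tail asymptotics the two arguments are essentially identical: you invoke Theorem \ref{thm:asymbeh} with $g(s)=s/(h+s)\sim s/h$, while the paper invokes Corollary \ref{cor:corasymbeh} using $\E^x[T]=1/h$, which is the same specialization one step removed. One small remark: both arguments (and the statement itself) implicitly require $h>0$ for \eqref{eq:fptasympstablesub}; at $h=0$ your $g(s)\equiv 1$ is slowly varying of index $0$, not $1$, but this edge case is outside the intended scope.
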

\begin{proof}
By using Eq. \eqref{eq:thmasymbehpass2} we have
\begin{equation}
\bP^x(\fT>t)=\int_0^{\infty}\bP^x(T >y)\bP^x(L(t)\in dy)=\int_0^\infty e^{-hy}\bP^x(L(t)\in dy).
\end{equation}
As proved in \cite{bingham1971limit} we know that the Laplace transform of the inverse of an $\alpha$-stable subordinator is
\begin{equation}
\int_0^{\infty}e^{-hy}\bP^x(L(t)\in dy)=E_\alpha(-ht^\alpha)
\end{equation}
and this proves the first statement. The second statement is a consequence of Corollary \ref{cor:corasymbeh} since $\E^x[T]=\frac{1}{h}$, but the fact that
\begin{equation}
E_\alpha(-ht^\alpha)\sim \frac{1}{h}\frac{t^{-\alpha}}{\Gamma(1-\alpha)}
\end{equation}
as $t\to +\infty$ is a well-known fact (e.g. \cite[eq. (24)]{scalas2006five}).
\end{proof}
\begin{rmk}
\bt{Assuming that $f(\lambda)$ is regularly varying at zero with $\alpha \in [0,1)$ implies that the corresponding subordinator has infinite expectation. Further,} since we have by eq. \eqref{eq:thmasymbehpass12} that $U(t)$ defined in Eq. \eqref{eq:thmasymbehU} varies regularly at infinity with index $1-\beta\alpha>0$, it follows that $\E^x[\fT]=+\infty$, hence our result agree with \cite[Thm 3.1]{chen2017time}.
\end{rmk}

\textcolor{black}{
By using Theorem \ref{thm:asymbeh} we can show the following two results concerning family of open sets.
\begin{prop}
Let $\{S_t,t\ge 0\} \subseteq \fG$ be a family of open sets such that $\bigcap_{t \ge 0}S_t \not = \emptyset$ and suppose there exists an open set $S \supseteq \bigcup_{t \ge 0}S_t$ such that $T$ is almost surely finite. Then, if for some $x \in \bigcap_{t \ge 0}S_t$ the function $g(s)=\E^x[1-e^{-sT}]$ is regularly varying at $0^+$ with index $\beta \in [0,1]$ and $f(\lambda)$ is regularly varying at $0^+$ with index $\alpha \in [0,1)$,
\begin{equation}\label{eq:limsupsurv}
\limsup_{t \to +\infty}\frac{\bP^x(\widehat{\fT}>t)\Gamma(1-\alpha \beta)}{g(f(1/t))} \le 1
\end{equation}
where
\begin{equation}\label{eq:fTapvar}
\widehat{\fT}:=\inf\{t>0: \ X^f(t) \not \in S_t \}
\end{equation}
\end{prop}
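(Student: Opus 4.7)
The plan is to reduce everything to Theorem \ref{thm:asymbeh} by means of a pathwise monotonicity argument: a smaller open set is exited no later than a larger one, and even when the small set is allowed to depend on $t$, the exit time is still bounded above by the exit time from the fixed larger set $S$.

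Concretely, I would first fix $x\in\bigcap_{t\ge 0}S_t$ and note that $x\in S$ since $S\supseteq\bigcup_{t \ge 0} S_t$. Define $\fT:=\inf\{t\ge 0: X^f(t)\notin S\}$. Because $S_t\subseteq S$ for every $t$, whenever $X^f(t)\notin S$ one also has $X^f(t)\notin S_t$, so
\begin{equation*}
\{t>0: X^f(t)\notin S\}\subseteq\{t>0: X^f(t)\notin S_t\},
\end{equation*}
and taking infima yields $\widehat{\fT}\le\fT$ on every path. Hence $\bP^x(\widehat{\fT}>t)\le\bP^x(\fT>t)$ for all $t>0$.

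Next, since $x\in S$, the hypotheses of Theorem \ref{thm:asymbeh} are in force (regular variation of $g$ at $0^+$ with index $\beta\in[0,1]$ and of $f$ with index $\alpha\in[0,1)$), so
\begin{equation*}
\bP^x(\fT>t)\sim\frac{1}{\Gamma(1-\alpha\beta)}\,g\!\left(f(1/t)\right),\qquad t\to+\infty.
\end{equation*}
Dividing the inequality $\bP^x(\widehat{\fT}>t)\le\bP^x(\fT>t)$ by $g(f(1/t))/\Gamma(1-\alpha\beta)$ and taking $\limsup_{t\to\infty}$ gives exactly \eqref{eq:limsupsurv}. The almost sure finiteness of $T$ is used implicitly to ensure $g(s)\to 0$ as $s\to 0^+$ and thereby to make sense of the regular variation assumption on $g$; the nontriviality condition $\bP^x(T>0)>0$ is automatic because $x$ lies in the open set $\bigcap_tS_t$.

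There is essentially no technical obstacle beyond the monotonicity observation; the only thing one must be careful about is that the set exited by $\widehat{\fT}$ varies with $t$, so one cannot directly compare with the \emph{$S_t$-exit time at a single $t$}, but only with the exit time from a fixed set containing all $S_t$. The upper envelope $S$ provided by the hypothesis is precisely what makes this comparison work, and because only an upper bound on the survival function is produced, we obtain a $\limsup$ (not an equivalence).
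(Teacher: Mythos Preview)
Your proposal is correct and follows essentially the same route as the paper: establish the pathwise inequality $\widehat{\fT}\le\fT$ from the inclusion $S_t\subseteq S$, deduce $\bP^x(\widehat{\fT}>t)\le\bP^x(\fT>t)$, and then invoke Theorem~\ref{thm:asymbeh} before taking the $\limsup$. The paper phrases the comparison via the observation $S_{\sigma(y)}\subseteq S$ and $S_{\sigma(y-)}\subseteq S$, whereas you argue directly with $\{t:X^f(t)\notin S\}\subseteq\{t:X^f(t)\notin S_t\}$; the content is the same.
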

\begin{proof}
Let us observe that $S_{\sigma(y)}\subseteq S$ and $S_{\sigma(y-)}\subseteq S$. Then we have $\fT \ge \widehat{\fT}$ and
\begin{equation*}
\bP^x(\widehat{\fT}>t)\le \bP^x(\fT>t).
\end{equation*}
Hence we have
\begin{equation*}
\frac{\bP^x(\widehat{\fT}>t)\Gamma(1-\alpha\beta)}{g(f(1/t))}\le \frac{\bP^x(\fT>t)\Gamma(1-\alpha\beta)}{g(f(1/t))}
\end{equation*}
and then, taking the $\limsup_{t \to +\infty}$ and using Theorem \ref{thm:asymbeh}, we obtain Equation \eqref{eq:limsupsurv}. 
\end{proof}
\begin{prop}
Let $\{S_t,t\ge 0\} \subseteq \fG$ be a family of open sets and suppose there exists an open set \mg{$\emptyset \not = S \subseteq \bigcap_{t \ge 0}S_t$} such that $T$ is almost surely finite. Then, if for some $x \in S$ the function $g(s)=\E^x[1-e^{-sT}]$ is regularly varying at $0^+$ with index $\beta \in [0,1]$ and $f(\lambda)$ is regularly varying at $0^+$ with index $\alpha \in [0,1)$,
\begin{equation}\label{eq:liminfsurv}
\liminf_{t \to +\infty}\frac{\bP(\widehat{\fT}>t)\Gamma(1-\alpha \beta)}{g(f(1/t))} \ge 1
\end{equation}
where $\widehat{\fT}$ has been defined before in Eq. \eqref{eq:fTapvar}.
\end{prop}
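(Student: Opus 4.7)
The plan is to reduce the statement to a direct application of Theorem \ref{thm:asymbeh} by bracketing $\widehat{\fT}$ below by the fixed-set exit time $\fT$. The key pathwise observation is that, by hypothesis, $S \subseteq \bigcap_{t \ge 0} S_t$, and so $S \subseteq S_t$ for every $t \ge 0$. Consequently, whenever $X^f(t) \notin S_t$ one also has $X^f(t) \notin S$, so on every path the first time the process leaves the moving region is no earlier than the first time it leaves the fixed region, i.e.
\begin{equation*}
\fT \;\le\; \widehat{\fT} \qquad \text{almost surely},
\end{equation*}
where $\fT = \inf\{t\ge 0: X^f(t)\notin S\}$ is the exit time from $S$ considered in Theorem \ref{thm:asymbeh}. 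Therefore
\begin{equation*}
\bP^x(\widehat{\fT}>t) \;\ge\; \bP^x(\fT>t)
\end{equation*}
for every $t\ge 0$.

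Next I would verify that the hypotheses of Theorem \ref{thm:asymbeh} apply to $\fT$: the point $x$ lies in $S$, the function $g(s)=\E^x[1-e^{-sT}]$ is regularly varying at $0^+$ with index $\beta\in[0,1]$, and $f$ is regularly varying at $0^+$ with index $\alpha\in[0,1)$; the assumption that $T$ is almost surely finite is exactly what ensures $g(0^+)=0$, consistently with the regular variation of $g$ at the origin. Theorem \ref{thm:asymbeh} then gives
\begin{equation*}
\bP^x(\fT>t) \;\sim\; \frac{1}{\Gamma(1-\alpha\beta)}\, g\!\left(f\!\left(\tfrac{1}{t}\right)\right) \qquad \text{as } t\to\infty.
\end{equation*}

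Combining the inequality with the asymptotic and dividing through by $g(f(1/t))/\Gamma(1-\alpha\beta)>0$, one obtains
\begin{equation*}
\frac{\bP^x(\widehat{\fT}>t)\,\Gamma(1-\alpha\beta)}{g(f(1/t))} \;\ge\; \frac{\bP^x(\fT>t)\,\Gamma(1-\alpha\beta)}{g(f(1/t))} \;\longrightarrow\; 1,
\end{equation*}
and taking the $\liminf_{t\to+\infty}$ on the left yields exactly \eqref{eq:liminfsurv}. There is no real obstacle here: the whole content lies in identifying the correct monotone comparison between $\widehat{\fT}$ and $\fT$ induced by the set inclusion $S\subseteq \bigcap_t S_t$, which is the mirror image of the inclusion used in the preceding proposition; once this is observed, Theorem \ref{thm:asymbeh} does all the analytic work.
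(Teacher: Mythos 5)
Your proof is correct and follows the same route as the paper: the pathwise comparison $\fT \le \widehat{\fT}$ coming from the inclusion $S \subseteq \bigcap_{t\ge0} S_t$, the resulting tail inequality $\bP^x(\widehat{\fT}>t)\ge \bP^x(\fT>t)$, and then the asymptotic from Theorem \ref{thm:asymbeh} to pass to the $\liminf$. The paper phrases the set inclusion through $S_{\sigma(y)}\supseteq S$ and $S_{\sigma(y-)}\supseteq S$ but the argument is identical in substance.
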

\begin{proof}
Let us observe that $S_{\sigma(y)}\supseteq S$ and $S_{\sigma(y-)}\supseteq S$. Then we have $\fT \le \widehat{\fT}$ and
\begin{equation*}
\bP(\widehat{\fT}>t)\ge \bP(\fT>t).
\end{equation*}
Hence we have
\begin{equation*}
\frac{\bP(\widehat{\fT}>t)\Gamma(1-\alpha\beta)}{g(f(1/t))}\ge \frac{\bP(\fT>t)\Gamma(1-\alpha\beta)}{g(f(1/t))}
\end{equation*}
and then, taking the $\liminf_{t \to +\infty}$ and using Theorem \ref{thm:asymbeh}, we obtain Equation \eqref{eq:limsupsurv}. 
\end{proof}
}
\subsection{\bt{Smoothness}}
In the previous section we have used the Monotone Density Theorem to deduce the asymptotic behaviour at infinity of the function $t \mapsto \bP^x(\fT>t)$. Moreover we could use such theorem if $\fT$ is absolutely continuous to deduce the asymptotic behaviour of the probability density function of $\fT$. For this reason, it could be interesting to investigate what are some assumptions under which $\fT$ is absolutely continuous.
\begin{thm}\label{thm:abscont}
If the function $s \mapsto \overline{\nu}(s)$ is absolutely continuous, then $\fT$ is an absolutely continuous random variable.
\end{thm}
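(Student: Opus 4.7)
The plan is to reduce absolute continuity of $\fT$ to absolute continuity of the subordinator $\sigma(y)$ at each fixed positive time $y$, and then to prove the latter via a small/large jump decomposition. From the proof of Theorem \ref{thm:asymbeh} (specifically \eqref{eq:thmasymbehpass1}), $\fT$ has the same law as $\sigma(T)$ with $\sigma$ independent of $T$; conditioning on $T$ yields, for any Borel set $A\subseteq[0,+\infty)$,
\[
\bP^x(\fT\in A) = \int_{(0,+\infty)}\bP(\sigma(y)\in A)\,\bP^x(T\in dy) + \bP^x(T=0)\,\mathbf{1}_A(0).
\]
In the non-degenerate situation $T>0$ a.s.\ (as is the case for c\`adl\`ag $M$ and $x$ in the open set $S$), the atom at $0$ disappears and it is enough to show that the law of $\sigma(y)$ is absolutely continuous for every $y>0$.

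The hypothesis that $s\mapsto\overline{\nu}(s)$ is absolutely continuous is equivalent to $\nu$ admitting a Lebesgue density $\phi$ on $(0,+\infty)$. Fix $\varepsilon>0$ and decompose $\sigma=\sigma^\varepsilon+\tilde\sigma^\varepsilon$ into independent subordinators, where $\sigma^\varepsilon$ collects the jumps of size $\ge\varepsilon$ and $\tilde\sigma^\varepsilon$ those of size $<\varepsilon$. Since $\overline{\nu}(\varepsilon)<+\infty$, $\sigma^\varepsilon$ is compound Poisson with intensity $\overline{\nu}(\varepsilon)$ and jump-size density $\overline{\nu}(\varepsilon)^{-1}\phi(s)\mathbf{1}_{s\ge\varepsilon}$. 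Conditioning on the Poisson number of jumps of $\sigma^\varepsilon$ in $[0,y]$ decomposes the law of $\sigma^\varepsilon(y)$ as $e^{-\overline{\nu}(\varepsilon)y}\delta_0+\rho_{\varepsilon,y}$, where $\rho_{\varepsilon,y}$ is a convex combination of iterated convolutions of an absolutely continuous density and is therefore absolutely continuous on $(0,+\infty)$.

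Convolving with the law of the independent summand $\tilde\sigma^\varepsilon(y)$, the contribution of $\rho_{\varepsilon,y}$ stays absolutely continuous (convolution of an absolutely continuous measure with any probability measure is absolutely continuous, by Fubini), while the contribution of the atom $e^{-\overline{\nu}(\varepsilon)y}\delta_0$ produces a measure of total mass $e^{-\overline{\nu}(\varepsilon)y}$. Hence the non-absolutely continuous part of the law of $\sigma(y)$ has total mass at most $e^{-\overline{\nu}(\varepsilon)y}$. The standing assumption $\nu(0,+\infty)=+\infty$ forces $\overline{\nu}(\varepsilon)\to+\infty$ as $\varepsilon\to 0^+$, so the bound tends to $0$; thus $\sigma(y)$ is absolutely continuous for every $y>0$, which together with the first step yields the claim.

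The main subtlety I anticipate is not conceptual but organizational: one must carefully track which pieces of the decomposition of $\sigma^\varepsilon(y)$ contribute to the absolutely continuous and singular parts of $\sigma(y)$ after convolution with $\tilde\sigma^\varepsilon(y)$, and verify that the atomic mass at $0$ of $\sigma^\varepsilon(y)$ is the sole obstruction to absolute continuity, so that letting $\varepsilon\to 0^+$ removes it entirely.
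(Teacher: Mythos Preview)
Your proof is correct, and the reduction step---conditioning on $T$ to reduce the problem to absolute continuity of $\sigma(y)$ for each fixed $y>0$---is exactly what the paper does. The difference lies in how absolute continuity of $\sigma(y)$ is obtained.

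The paper simply invokes \cite[Theorem~27.10]{sato1999levy}: it observes that in dimension one the hypothesis ``$s\mapsto\overline{\nu}(s)$ is absolutely continuous'' makes $\nu$ radially absolutely continuous in Sato's sense, and that $\nu(0,\infty)=\infty$ gives the required divergence condition; Sato's theorem then delivers a density $\mu(\cdot,y)$ for $\sigma(y)$ directly. Your argument, by contrast, is self-contained: you prove the one-dimensional case of Sato's result from scratch via the compound-Poisson truncation $\sigma=\sigma^\varepsilon+\tilde\sigma^\varepsilon$, showing that the singular part of the law of $\sigma(y)$ has mass at most $e^{-\overline{\nu}(\varepsilon)y}\to 0$. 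This is essentially the classical proof of that theorem. The trade-off is clear: the paper's route is shorter and yields an explicit density formula \eqref{eq:pdffT} that is reused later (Propositions~\ref{prop:infdiff} and~\ref{prop:Laptrans}), while your route avoids an external reference and makes the mechanism transparent. Your treatment of the possible atom at $T=0$ is also slightly more careful than the paper's, which tacitly assumes it away.
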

\begin{proof}
\bt{Note that absolute continuity of $ s \mapsto \bar{\nu}(s)$ together with $\nu (0, \infty)=\infty$ imply, by \cite[Theorem 27.10]{sato1999levy}, that $\sigma(t)$ has a Lebesgue density $\mu(x,t)$.}
Indeed let us recall, from \cite[Def. 27.9]{sato1999levy}, that a measure $\nu$ on $\R^d\setminus\{0\}$ is radially absolutely continuous if there are a finite measure $\lambda$ on the unit sphere $S$ of $\R^d$ and a non-negative measurable function $(\xi,r)\in S \times (0,\infty) \mapsto g(\xi,r) \in \R$ such that for any Borel set $B$ of $\R^d\setminus\{0\}$
\begin{equation}
\nu(B)=\int_{S}\lambda(d\xi)\int_0^\infty g(\xi,r)1_B(r\xi)dr.
\end{equation}
If $d=1$, then let us observe that $S=\{-1,1\}$. Since $s \mapsto \overline{\nu}(s)$ is absolutely continuous, there exists a function $g(1,s)$ such that $\overline{\nu}(s)=\int_s^{+\infty}g(1,r)dr$. Let us also pose $g(-1,r)=0$ for any $r \in (0,\infty)$. Moreover let us pose $\lambda=\delta_1$ where $\delta_1$ is the Dirac delta centred in $1$. Then it is easy to see that
\begin{equation}
\nu(B)=\int_{S}\lambda(d\xi)\int_0^\infty g(\xi,r)1_B(r\xi)dr=\int_{B \cap (0,+\infty)}g(1,r)dr.
\end{equation}
Moreover, since $\nu(0,\infty)=+\infty$, then $\int_0^{+\infty}g(1,r)dr=+\infty$ and $\nu$ satisfies also the divergence condition. 

\bt{Using} \eqref{eq:thmasymbehpass1} we can write
\begin{equation}
\bP^x(\fT \in ds)=\int_{0}^{\infty}\bP^x(\sigma(w)\in ds)\bP^x(T \in dw)=\int_0^{\infty}\mu(s,w)\bP^x(T \in dw)
\end{equation}
\bt{anf thus} $\fT$ is absolutely continuous with probability density function 
\begin{equation}\label{eq:pdffT}
p^x_{\fT}(s)=\int_0^{\infty}\mu(s,w)\bP^x(T \in dw).
\end{equation}
\end{proof}
\bt{We can further} investigate conditions under which the probability density function $p_{\fT}$ is infinitely differentiable.
\begin{prop}\label{prop:infdiff}
If $s \mapsto \overline{\nu}(s)$ is absolutely continuous and there exist $\gamma \in (0,2)$, $C>0$ and $r_0>0$ such that
\begin{equation}\label{eq:hyp1}
\int_0^rs^2\nu(ds)>Cr^\gamma, \quad \mbox{ for all }0<r<r_0,
\end{equation}
then $\fT$ is an absolutely continuous random variable and its probability density function $p_{\fT}$ is infinitely differentiable.
\end{prop}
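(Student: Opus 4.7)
The proposition has two parts. Absolute continuity of $\fT$ is already delivered by Theorem~\ref{thm:abscont}, whose proof moreover furnishes the representation
\[
p^x_\fT(s)=\int_0^{+\infty}\mu(s,w)\,\bP^x(T\in dw),
\]
where $\mu(\cdot,w)$ is the Lebesgue density of $\sigma(w)$. The plan is therefore to prove the second conclusion, namely $p^x_\fT\in C^\infty$, by differentiating under the integral sign, exploiting the fact that \eqref{eq:hyp1} forces $\mu(\cdot,w)$ to be very smooth in $s$ with quantitative control on its derivatives.

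The first step is to turn \eqref{eq:hyp1} into a growth estimate for the (Fourier) characteristic exponent $\tilde\psi(\xi)=\int_0^\infty(1-e^{i\xi s})\nu(ds)$ of the subordinator. Using the elementary bound $1-\cos u\ge c_0 u^2$ on $|u|\le 1$ and restricting to $s\in(0,1/|\xi|]$ one obtains, for $|\xi|$ large,
\[
\mathrm{Re}\,\tilde\psi(\xi)\ge c_0\xi^2\int_0^{1/|\xi|}s^2\,\nu(ds)\ge c_1|\xi|^{2-\gamma}.
\]
Since $\gamma\in(0,2)$, this places $\xi\mapsto e^{-w\tilde\psi(\xi)}$ in the Schwartz class for every fixed $w>0$. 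Fourier inversion then yields $\mu(\cdot,w)\in C^\infty(\R)$ together with the representation
\[
\partial_s^n\mu(s,w)=\frac{1}{2\pi}\int_\R(i\xi)^n e^{-i\xi s-w\tilde\psi(\xi)}\,d\xi,
\]
and, rescaling $\eta=w^{1/(2-\gamma)}\xi$, the crude pointwise bound $|\partial_s^n\mu(s,w)|\le K_n w^{-(n+1)/(2-\gamma)}$.

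What remains is to justify the exchange of $\partial_s^n$ with the integration against $\bP^x(T\in dw)$. For $w$ bounded away from $0$ the bound above suffices immediately. The delicate point, which I expect to be the main obstacle, is the region $w\downarrow 0$: there the bound $K_n w^{-(n+1)/(2-\gamma)}$ blows up and is not integrable against the law of $T$ without additional information on the behaviour of $\bP^x(T\le t)$ near $t=0$. The route I would follow is to sharpen the estimate for $s$ away from the origin by exploiting cancellation in the oscillatory integral: either integrating by parts in $\xi$, or expanding $e^{-w\tilde\psi(\xi)}=1+\sum_{k\ge 1}(-w\tilde\psi(\xi))^k/k!$ and observing that for $s\neq 0$ only the terms with $k\ge 1$ contribute to $\partial_s^n\mu(s,w)$, one arrives at an estimate of the form $|\partial_s^n\mu(s,w)|\le K_n(s)\,w$ uniform on compact subsets of $(0,+\infty)$. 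Such a bound is integrable against any probability measure on $(0,+\infty)$, so dominated convergence legitimises $\partial_s^n\int=\int\partial_s^n$ on $(0,+\infty)$ and gives $p^x_\fT\in C^\infty$.
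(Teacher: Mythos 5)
Your proposal follows the same high-level route as the paper: absolute continuity is inherited from Theorem~\ref{thm:abscont}, and smoothness is attacked by differentiating under the integral sign, using a Paley--Wiener type bound on the characteristic function of $\sigma(w)$ that follows from \eqref{eq:hyp1} (the paper cites Orey for $\left|\E\left[e^{i\xi\sigma(1)}\right]\right|\le e^{-\frac{c}{4}|\xi|^{2-\gamma}}$; your direct derivation via $1-\cos u\ge c_0 u^2$ reproduces the same estimate). The one structural difference is the order of operations: the paper first applies Fubini to rewrite $p_{\fT}$ as a single Fourier integral, $p_{\fT}(s)=\frac{1}{2\pi}\int_{\R}e^{-i\xi s}\int_0^\infty e^{-w\varphi(\xi)}\bP^x(T\in dw)\,d\xi$, and then differentiates under the $\xi$-integral with dominating function $\xi\mapsto|\xi|^n\int_0^\infty |e^{-w\varphi(\xi)}|\,\bP^x(T\in dw)$; you differentiate $\mu(s,w)$ first and then integrate in $w$. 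Both orders reduce to exactly the same quantity: up to constants, $\int_{\R}|\xi|^n\,\E^x\!\left[e^{-\frac{c}{4}T|\xi|^{2-\gamma}}\right]d\xi = C_n\,\E^x\!\left[T^{-(n+1)/(2-\gamma)}\right]$, which is the same blow-up you observe in your crude bound $|\partial_s^n\mu(s,w)|\le K_n w^{-(n+1)/(2-\gamma)}$.

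The genuine gap in your write-up is that the sharper estimate you conjecture, $|\partial_s^n\mu(s,w)|\le K_n(s)\,w$ uniformly on compacts of $(0,\infty)$, is not established, and neither of the tools you name delivers it under the stated hypotheses. The series $e^{-w\tilde\psi}=1+\sum_{k\ge 1}(-w\tilde\psi)^k/k!$ cannot be Fourier-inverted term by term: $\tilde\psi(\xi)$ is unbounded, so each term with $k\ge 1$ is not in $L^1(d\xi)$, and the $k=0$ term contributes $\delta_0$, not $0$, so the claim that ``only terms with $k\ge 1$ contribute for $s\neq 0$'' is purely formal. Integration by parts in $\xi$ needs $\tilde\psi'(\xi)=-i\int_0^\infty s\,e^{i\xi s}\,\nu(ds)$, hence $\int_0^\infty s\,\nu(ds)<\infty$, which the standing condition $\int_0^\infty(s\wedge 1)\,\nu(ds)<\infty$ does not give --- it already fails for the $\alpha$-stable subordinator, the case of main interest here. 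To your credit, you correctly isolate the difficulty at $w\downarrow 0$, and it is not merely an artifact of your ordering: the paper's own dominated-convergence step, with the dominating function indicated above, implicitly needs $\E^x\!\left[T^{-(n+1)/(2-\gamma)}\right]<\infty$ for every $n$ (equivalently, rapid decay of $\bP^x(T\le t)$ at $0$), a condition that is not among the hypotheses of Proposition~\ref{prop:infdiff}. So the obstruction you flagged is real, but the repair you propose does not go through as written, and the proof as a whole remains incomplete.
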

\begin{proof}
The fact that $\fT$ is absolutely continuous is consequence of Theorem \ref{thm:abscont}. Moreover $p_{\fT}$ is given by Eq. \eqref{eq:pdffT}. Under hypothesis \eqref{eq:hyp1}, by using the results in \cite{orey1968continuity}, we know that for some $c>0$ and $\xi$ sufficiently large
\begin{equation}
\left|\E\left[e^{i\xi\sigma(1)}\right]\right|\le e^{-\frac{c}{4}|\xi|^{2-\gamma}}.
\label{oreybound}
\end{equation}
\bt{and thus one can differentiate under integration in
\begin{align}
\mu(x, t) \, =  \, \frac{1}{2\pi} \int_{\mathbb{R}}  e^{-i\xi x} e^{-t\varphi(\xi)}d\xi
\label{263}
\end{align}
where we denote by $\varphi$ the L\'evy symbol of $\sigma$. Recall now that, \mg{from Eq. \eqref{eq:pdffT}}
\begin{align}
\bP^x \l \mathfrak{T}  \in ds\r /ds \, = \, \int_0^\infty \mu(s, w) \, \bP^x \l T \in dw \r.
\end{align}
Use \eqref{263} to say that
\begin{align}
\bP^x \l \mathfrak{T}  \in ds\r /ds \, = \, \frac{1}{2\pi} \int_{\mathbb{R}} e^{-i\xi s} \int_0^\infty e^{-w\varphi(\xi)}\bP^x(T \in dw) \, d\xi
\label{invfour}
\end{align}
and note that
\begin{align}
\left|\frac{e^{-i\xi s}-e^{-i\xi s^\prime}}{s-s^\prime} \int_0^\infty e^{-w \varphi(\xi)}\bP^x (T \in dw)\right| \leq  \int_0^\infty \left| \xi e^{-w\varphi(\xi)}\right|P^x \l T \in dw \r.
\end{align}
Hence by using \eqref{oreybound} we can apply dominated convergence to differentiate repeatedly under integration and thus} $p_{\fT}(s)$ is infinitely differentiable.
\end{proof}
If we know that $p_{\fT}$ admits derivatives of all order, then we could be interested in when such derivatives admit Laplace transform. A particular case could be the one in which we can prove that all the derivatives of $p_{\fT}$ are bounded. In particular we can show the following Proposition
\begin{prop}\label{prop:Laptrans}
\bt{Under the assumptions of Proposition \ref{prop:infdiff} the density $p_{\fT}$ and all its derivatives are bounded}.
\end{prop}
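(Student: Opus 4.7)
The plan is to upgrade the Fourier-inversion argument of Proposition \ref{prop:infdiff} into an $s$-uniform bound. I would start from the representation used there,
\begin{align*}
p_{\fT}(s) \, = \, \frac{1}{2\pi} \int_{\R} e^{-i\xi s} \, \E^x\!\left[ e^{-T \varphi(\xi)} \right] d\xi,
\end{align*}
and exploit the fact that multiplication by $(-i\xi)^n$ in the frequency variable corresponds to $\partial_s^n$, writing
\begin{align*}
p_{\fT}^{(n)}(s) \, = \, \frac{(-i)^n}{2\pi} \int_{\R} \xi^n e^{-i\xi s} \, \E^x\!\left[ e^{-T\varphi(\xi)} \right] d\xi,
\end{align*}
with the $n$-fold differentiation justified by dominated convergence via Orey's estimate \eqref{oreybound}, exactly as in Proposition \ref{prop:infdiff}. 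Using $|e^{-i\xi s}| = 1$ and Fubini I would then obtain
\begin{align*}
\|p_{\fT}^{(n)}\|_\infty \, \leq \, \frac{1}{2\pi} \, \E^x\!\left[ \int_{\R} |\xi|^n e^{-T \, \mathrm{Re}(\varphi(\xi))} \, d\xi \right],
\end{align*}
which is manifestly independent of $s$: this is precisely the uniformity that is needed.

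To estimate the inner integral I would split at $|\xi| = R$, with $R$ chosen large enough that Orey's bound applies. On $|\xi| \leq R$ the trivial estimate $e^{-T \, \mathrm{Re}(\varphi(\xi))} \leq 1$ contributes a harmless constant $K_n$, while on $|\xi| > R$ the inequality $\mathrm{Re}(\varphi(\xi)) \geq \frac{c}{4}|\xi|^{2-\gamma}$ kicks in, and the change of variable $\eta = (cT/4)^{1/(2-\gamma)} \xi$ converts the tail piece into a constant multiple of $T^{-(n+1)/(2-\gamma)}$. Putting the pieces together,
\begin{align*}
\|p_{\fT}^{(n)}\|_\infty \, \leq \, K_n + C_n \, \E^x\!\left[ T^{-(n+1)/(2-\gamma)} \right].
\end{align*}

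The main obstacle is therefore to verify that $\E^x\!\left[ T^{-(n+1)/(2-\gamma)} \right] < \infty$ for every $n \in \N$. For exit times of reasonable Markov processes from open sets, with an interior starting point, this is typically the case because $\bP^x(T \leq t)$ decays fast as $t \to 0^+$, giving $T$ all negative moments; this short-time concentration of $T$ is essentially the same input already needed in Proposition \ref{prop:infdiff} in order to differentiate under the integral, and securing it under the standing assumptions is the delicate point of the proof. Once that negative-moment estimate is in hand, the three bounds above combine to yield boundedness of $p_{\fT}^{(n)}$ for every $n$, as claimed.
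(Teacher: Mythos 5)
Your approach is precisely the paper's: start from \eqref{invfour}, differentiate under the integral sign, use $|e^{-i\xi s}|=1$ to obtain an $s$-uniform bound, and appeal to Orey's estimate \eqref{oreybound}. The paper's proof of Proposition \ref{prop:Laptrans} is in fact nothing more than these two steps.

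The point worth stressing is that the negative-moment issue you raise is a genuine gap which the paper's proof glosses over. The paper asserts that the right-hand side of \eqref{unifb} is finite ``by using \eqref{oreybound}''; but Orey's estimate only controls the inner $\xi$-integral for each fixed $w>0$, and, as your change of variable correctly shows, that integral behaves like a constant times $w^{-(n+1)/(2-\gamma)}$ as $w\to 0^+$. Integrability of the iterated integral against $\bP^x(T\in dw)$ near $w=0$ therefore requires $\E^x\bigl[T^{-(n+1)/(2-\gamma)}\bigr]<\infty$ for every $n$, and this does \emph{not} follow from the stated hypotheses of Proposition \ref{prop:infdiff}, which concern only the L\'evy measure $\nu$ of the subordinator. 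The standing condition $\bP^x(T>0)>0$ gives $T>0$ a.s.\ but yields no negative moments (consider $\bP^x(T\le t)\sim ct^\rho$ as $t\to 0^+$ with $\rho>0$ small). As you observe, the same integrability is already tacitly required in the dominated-convergence step of Proposition \ref{prop:infdiff} itself. In the paper's intended applications the condition does hold --- e.g.\ for Gauss--Markov first-passage times through a threshold strictly above the starting point, $\bP^x(T\le t)$ is rapidly decreasing at $0^+$ by Proposition \ref{prop:rapdec}, so all negative moments of $T$ are finite --- but as stated both Propositions \ref{prop:infdiff} and \ref{prop:Laptrans} need this extra hypothesis on $T$. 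You are right to flag it as the delicate step; your diagnosis is sharper than the argument given in the paper.
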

\begin{proof}
\bt{Use \eqref{invfour} to say that for $s \in \mathbb{R}$
\begin{align}
\left|\frac{\partial^n}{\partial s^n} p_{\fT} (s) \right| \, \leq  \, \int_\mathbb{R} \int_0^\infty |\xi|^n \left|e^{-w\varphi(\xi)}\right| \bP^x(T \in dw) d\xi
\label{unifb}
\end{align}
and use \eqref{oreybound} to say that the right-hand side of \eqref{unifb} is finite. Since this bound does not depend on $s$ the result is proved.}
%
\end{proof}
\subsection{Rapid behaviour at zero}
In order to determine some properties related to the asymptotic behaviour at $0$ of the distribution function of $\fT$, one has also to work with functions whose decay at $0$ is more rapid then any power function.\\
Let us say that a function $f:[0,+\infty[\to [0,+\infty[$ is \textit{rapidly decreasing at $0^+$} if:
\begin{equation}
\forall \alpha >0, \ \lim_{t \to 0^+}\frac{f(t)}{t^\alpha}=0.
\end{equation}
It follows from the definition that in such case $\lim_{t \to 0^+}f(t)=0$.
About regularity in $0$ of such functions, we have the following Lemma.
\begin{lem}\label{lem:regrapdecfun}
Suppose $f \in C^{\infty}(0,\delta)$ for some $\delta>0$. \bt{Then the following are equivalent:
\begin{enumerate}
\item $f$ is rapidly decreasing at $0^+$
\item  $f \in C^{\infty}([0,\delta))$ and $f^{(n)}(0)=0$ for all $n \in \N$.
\end{enumerate}}
Moreover if $f$ is rapidly decreasing at $0^+$ then all its derivatives are rapidly decreasing at $0^+$.
\end{lem}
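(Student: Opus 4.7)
The natural plan is to treat $(2)\Rightarrow(1)$ as the easy direction via Taylor's theorem, to derive the \textit{moreover} clause immediately from it, and then to use the \textit{moreover} clause as the engine that drives an induction for $(1)\Rightarrow(2)$.

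For $(2)\Rightarrow(1)$, fix $\alpha>0$ and pick an integer $n>\alpha$. Since $f\in C^{\infty}([0,\delta))$ and $f^{(k)}(0)=0$ for all $k\in\N$, Taylor's formula with Lagrange remainder at $0$ reduces to
\begin{equation*}
f(t)=\frac{f^{(n)}(\xi_t)}{n!}\,t^{n},\qquad \xi_t\in(0,t).
\end{equation*}
Continuity of $f^{(n)}$ at $0$ together with $f^{(n)}(0)=0$ forces $|f(t)|/t^{\alpha}\le |f^{(n)}(\xi_t)|\,t^{n-\alpha}/n! \to 0$ as $t\to 0^{+}$, which is the rapid decrease of $f$. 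Since every derivative $f^{(k)}$ also satisfies hypothesis (2), the very same Taylor argument, applied to $f^{(k)}$ in place of $f$, establishes that each $f^{(k)}$ is rapidly decreasing at $0^{+}$. This both finishes $(2)\Rightarrow(1)$ and proves the \emph{moreover} clause \emph{under} assumption (2).

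For $(1)\Rightarrow(2)$, I would argue by induction on $n\ge 0$ proving the joint statement: \emph{$f$ admits a $C^{n}$ extension to $[0,\delta)$ with $f^{(k)}(0)=0$ for $0\le k\le n$, and every derivative $f^{(k)}$, $k\le n$, is rapidly decreasing at $0^{+}$.} The base case $n=0$ is immediate: rapid decrease forces $\lim_{t\to 0^{+}}f(t)=0$, so we extend continuously by $f(0):=0$. For the inductive step, granted rapid decrease of $f^{(n)}$ and $f^{(n)}(0)=0$, the difference quotient $f^{(n)}(t)/t\to 0$ as $t\to 0^{+}$, which both defines $f^{(n+1)}(0):=0$ and shows $f^{(n+1)}$ is continuous at $0$; after that the $(2)\Rightarrow(1)$ argument applied up to order $n+1$ promotes $f^{(n+1)}$ to being rapidly decreasing, closing the induction.

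The main obstacle is the inductive transfer of \emph{rapid decrease from $f^{(n)}$ to $f^{(n+1)}$}, because pure $C^{\infty}$-regularity on the open half-interval does not by itself control derivatives from the size of the function (a smooth rapidly decreasing function can have wildly oscillating derivatives). The way I would try to close this gap is a Landau–Kolmogorov style bootstrap: for $t\in(0,\delta/2)$, expanding $f^{(n)}$ around $t$ at the auxiliary point $t+s$ with $s=t/2$ and using the rapid-decay bound $|f^{(n)}(u)|\le C_N u^{N}$ on both $f^{(n)}(t)$ and $f^{(n)}(t+s)$ yields, after solving for $f^{(n+1)}(t)$, an estimate $|f^{(n+1)}(t)|\le C'_N t^{N-1}$; since $N$ is arbitrary, this is again rapid decrease. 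Making this bootstrap rigorous (controlling the remainder uniformly in $t$ via the next-order derivative, which is a priori not bounded) is the delicate technical point; once it is in place, both the equivalence and the \emph{moreover} clause follow simultaneously from the induction above.
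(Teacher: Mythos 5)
Your argument for $(2)\Rightarrow(1)$ via Taylor's theorem is sound and matches the paper's in spirit, and it correctly yields the \emph{moreover} clause \emph{under} hypothesis $(2)$. The trouble is entirely in $(1)\Rightarrow(2)$, and the obstacle you flag --- passing rapid decrease from $f^{(n)}$ to $f^{(n+1)}$ --- is not a technicality to be bootstrapped away: the implication $(1)\Rightarrow(2)$ is in fact \emph{false} for functions that are merely $C^{\infty}(0,\delta)$. Take
\[
f(t)\;=\;e^{-1/t}\,\sin\!\bigl(e^{2/t}\bigr),\qquad t\in(0,\delta).
\]
Since $|f(t)|\le e^{-1/t}=o(t^{\alpha})$ as $t\to 0^{+}$ for every $\alpha>0$, this $f$ is rapidly decreasing at $0^{+}$ and certainly $C^{\infty}$ on $(0,\delta)$; yet
\[
f'(t)\;=\;\frac{e^{-1/t}}{t^{2}}\sin\!\bigl(e^{2/t}\bigr)\;-\;\frac{2\,e^{1/t}}{t^{2}}\cos\!\bigl(e^{2/t}\bigr),
\]
and along $t_{k}$ defined by $e^{2/t_{k}}=2\pi k$ one gets $|f'(t_{k})|=2e^{1/t_{k}}/t_{k}^{2}\to\infty$. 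So $f'$ is unbounded near $0$, $f$ admits no $C^{1}$ extension to $[0,\delta)$, and the \emph{moreover} clause also fails for this $f$. The Landau--Kolmogorov bootstrap you sketch cannot rescue the step: those inequalities bound $f'$ by $f$ only with a compensating two-sided bound on $f''$, and that is precisely the control one does not have here.

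For the record, the paper's own proof of $(1)\Rightarrow(2)$ has the same gap, hidden in a backwards application of L'H\^opital's rule: from $\lim_{t\to 0^{+}}f(t)/t^{n+1}=0$ it writes a chain of equalities ending in $\lim_{t\to 0^{+}}f^{(n+1)}(t)/(n+1)!$, but L'H\^opital transfers information only in the other direction --- from existence of the limit of the quotient of derivatives to the limit of the original quotient. None of the intermediate limits is known to exist a priori, and the counterexample above shows they may genuinely fail to. Your instinct to isolate the inductive transfer as the crux, rather than dispatch it as routine, was exactly the right reading of the situation; closing the proof would require strengthening the hypothesis (e.g.\ boundedness near $0$ of every $f^{(n)}$, or the existence of Laplace transforms of all derivatives as in the way the lemma is actually invoked later in the paper).
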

\begin{proof}
First let us suppose that $f$ is rapidly decreasing at $0^+$ and let us show that $f \in C^{\infty}([0,\delta))$ and $f^{(n)}(0)=0$ for all $n \in \N$. We will show it by induction. Let us first notice that $f(0)=\lim_{t \to 0^+}f(t)=0$. Then let us notice that by definition
\begin{equation}
f'(0):=\lim_{t \to 0^+}\frac{f(t)}{t}=0.
\end{equation}
Now suppose $f \in C^n([0,\delta))$ and for any $m \le n$ we have $f^{(m)}(0)=0$. Thus we can use L'Hopital rule on $\lim_{t \to 0}\frac{f^{(m)}(t)}{t^\alpha}$ for any $m \le n$ and $\alpha>0$. In particular we have
\begin{equation*}
0=\lim_{t \to 0^+}\frac{f(t)}{t^{n+1}}=\lim_{t \to 0^+}\frac{f'(t)}{(n+1)t^{n}}=\dots=\lim_{t \to 0^+}\frac{f^{(n+1)}(t)}{(n+1)!}
\end{equation*}
and then we have $f^{(n+1)}(0)=0$.\\
Now suppose $f \in C^{\infty}([0,\delta))$ with $f^{(n)}(0)=0$ for any $n \in \N$ and let us show that $f$ is rapidly decreasing at $0^+$. First fix $n \in \N$ and observe that, by l'Hopital rule:
\begin{equation*}
\lim_{t \to 0^+}\frac{f(t)}{t^n}=\dots= \lim_{t \to 0^+}\frac{f^{(n)}(t)}{n!}=\frac{f^{(n)}(0)}{n!}=0.
\end{equation*}
Now consider a generic $\alpha>0$ and fix $n=\lfloor \alpha \rfloor+1$. Since $n \in \N$, we know that $\lim_{t \to 0^+}\frac{f(t)}{t^n}=0$ and $n-\alpha>0$. Thus we have
\begin{equation*}
\lim_{t \to 0^+}\frac{f(t)}{t^\alpha}=\lim_{t \to 0^+}\frac{f(t)}{t^n}t^{n-\alpha}=0.
\end{equation*}
Finally, let us observe that if $f \in C^{\infty}(0,\delta)$ is rapidly decreasing at $0^+$, then we have that $f \in C^{\infty}([0,\delta))$ and $f^{(n)}(0)=0$ for all $n \in \N$. Fix $m \in \N$ and observe that $f^{(m)}\in C^{\infty}([0,\delta))$ and for all $n \in \N$ we also have $f^{(m+n)}(0)=0$, so $f^{(m)}$ is rapidly decreasing at $0^+$.
\end{proof}
To study the asymptotic behaviour of the distribution function of $\fT$ near infinity we used the Tauberian theorem for regularly varying functions. Thus we will need a sort of Tauberian theorem also for rapidly decaying functions.
\begin{lem}\label{lem:Taubrapid}
Let $f \in C^{\infty}(0,\infty)$ and suppose $f$ and all its derivatives admit Laplace transform. Denote with $\widetilde{f}$ the Laplace transform of $f$. Then $f$ is rapidly decreasing at $0^+$ if and only if \bt{$\lim_{\lambda \to \infty}\lambda^\alpha \widetilde{f}(\lambda)=0$ for any $\alpha >0$.}
\end{lem}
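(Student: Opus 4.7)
The plan is a Tauberian/Abelian-type argument based on integration by parts for the Laplace transform, which exchanges the algebraic factor $\lambda^n$ for derivatives acting on $f$. The key auxiliary fact used throughout is that $\widetilde{f^{(n)}}(\lambda)\to 0$ as $\lambda\to\infty$: if $f^{(n)}$ is Laplace-transformable with abscissa $\lambda_0$, then for $\lambda\ge \lambda_0$ the integrand $e^{-\lambda t}f^{(n)}(t)$ is dominated by $e^{-\lambda_0 t}|f^{(n)}(t)|\in L^1(0,\infty)$ and tends to $0$ pointwise in $t$, so dominated convergence applies.

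A preliminary step common to both implications is the validation of the integration-by-parts formula
\begin{equation*}
\widetilde{f^{(n)}}(\lambda)=\lambda^n \widetilde{f}(\lambda)-\sum_{k=0}^{n-1}\lambda^{n-1-k}f^{(k)}(0^+).
\end{equation*}
Existence of $f^{(n)}(0^+)$ follows because $f^{(n+1)}$ is Laplace-transformable and hence locally integrable near $0$, so $f^{(n)}(t)=f^{(n)}(1)-\int_t^1 f^{(n+1)}(s)\,ds$ converges as $t\to 0^+$. The boundary term at $+\infty$ in the integration by parts vanishes because $e^{-\lambda t}f^{(k)}(t)$ is integrable on $(0,\infty)$ and has an integrable derivative, therefore it admits a limit at $+\infty$ which must necessarily be $0$.

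For the direction $(\Rightarrow)$, assume $f$ rapidly decreasing at $0^+$. Lemma \ref{lem:regrapdecfun} yields $f^{(k)}(0^+)=0$ for every $k$, so the formula collapses to $\lambda^n\widetilde{f}(\lambda)=\widetilde{f^{(n)}}(\lambda)\to 0$. For a general $\alpha>0$ pick an integer $n>\alpha$ and write $\lambda^\alpha\widetilde{f}(\lambda)=\lambda^{\alpha-n}\widetilde{f^{(n)}}(\lambda)$, the product of a factor tending to $0$ and a quantity tending to $0$.

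For the converse $(\Leftarrow)$, taking $n=1$ in the formula and letting $\lambda\to\infty$ forces $f(0^+)=0$; an induction on $n$ then yields $f^{(k)}(0^+)=0$ for every $k\in\N$. Extending $f$ by $f(0)=0$ and invoking L'Hopital's rule exactly as in Lemma \ref{lem:regrapdecfun} shows that the extension lies in $C^\infty([0,\delta))$ with all derivatives vanishing at $0$, and a final appeal to Lemma \ref{lem:regrapdecfun} concludes that $f$ is rapidly decreasing at $0^+$. The most delicate point is the justification of the integration-by-parts formula together with the existence of the boundary values $f^{(n)}(0^+)$; once those are in place, the rest is a short induction.
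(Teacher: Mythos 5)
Your proof is correct and follows essentially the same route as the paper: relate $\lambda^n\widetilde f(\lambda)$ to $\widetilde{f^{(n)}}(\lambda)$ via integration by parts, use the decay of Laplace transforms at infinity, interpolate to non-integer $\alpha$, and lean on Lemma \ref{lem:regrapdecfun} to close the loop between vanishing boundary values and rapid decrease. The only difference is stylistic: where the paper invokes the textbook Initial-Value Theorem as a black box, you derive the same conclusions directly from the integration-by-parts identity and dominated convergence, and you also supply the (otherwise implicit) justification that $f^{(n)}(0^+)$ exists from local integrability of $f^{(n+1)}$ and that the boundary term at $+\infty$ vanishes — which makes your version slightly more self-contained.
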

\begin{proof}
Let us first show that if $f$ is rapidly decreasing at $0^+$ then we have $\lambda^\alpha \widetilde{f}(\lambda)\to 0$, as $\lambda \to +\infty$, for all $\alpha >0$. Now note that, by the Initial-Value Theorem (e.g. \cite[Section 17.8]{cannon2003dynamics}), we have
\begin{equation}
\lim_{\lambda \to \infty}\lambda\widetilde{f}(\lambda)=\lim_{t \to 0^+}f(t)=0.
\end{equation}
Now fix $n \in \N$ with $n>1$ and denote by $\cL$ the Laplace transform operator. Since $f$ is rapidly decreasing at $0^+$, by Lemma \ref{lem:regrapdecfun} we know that $f^{(n-1)}(0)=0$. Moreover by hypothesis we know that $f^{(n-1)}$ admits Laplace transform and then, since for any $k\le n-1$ $f^{(k)}(0)=0$,
\begin{equation}
\cL[f^{(n-1)}](\lambda)=\lambda^{n-1}\widetilde{f}(\lambda).
\end{equation}
Thus, by the Initial-Value Theorem, we have
\begin{equation}
\lim_{\lambda \to \infty}\lambda^n\widetilde{f}(\lambda)=\lim_{\lambda \to \infty}\lambda \lambda^{n-1}\widetilde{f}(\lambda)=f^{(n-1)}(0)=0.
\end{equation}
Finally let us consider a generic $\alpha>0$. Pose $n=\lfloor \alpha \rfloor+1$ so that $n \in \N$ and $\alpha-n<0$. Thus we have
\begin{equation}
\lim_{\lambda \to \infty}\lambda^\alpha \widetilde{f}(\lambda)=\lim_{\lambda \to \infty}\lambda^{\alpha-n}\lambda^n\widetilde{f}(\lambda)=0.
\end{equation}
Now let us show that if for any $\alpha>0$ we have $\lim_{\lambda \to \infty}\lambda^\alpha \mg{\widetilde{f}}(\lambda)=0$ then $f$ is rapidly decreasing at $0^+$. To do this, let us proceed by induction. First observe that
\begin{equation}
f(0)=\lim_{\lambda \to \infty}\lambda\widetilde{f}(\lambda)=0.
\end{equation}
Now observe that, since $f(0)=0$, we have
\begin{equation}
\cL[f'](\lambda)=\lambda\widetilde{f}(\lambda)
\end{equation}
thus, by the Initial-Value Theorem
\begin{equation}
f'(0)=\lim_{\lambda \to \infty}\lambda\cL[f'](\lambda)=\lim_{\lambda \to \infty}\lambda^2\widetilde{f}(\lambda)=0.
\end{equation}
Now fix $n>1$ and suppose that $f^{(k)}(0)=0$ for any $k<n$. Then we have that
\begin{equation}
\cL[f^{(n)}](\lambda)=\lambda^n\widetilde{f}(\lambda).
\end{equation}
Thus, by the Initial-Value Theorem we have
\begin{equation}
f^{(n)}(0)=\lim_{\lambda \to \infty}\lambda\cL[f^{(n)}](\lambda)=\mg{\lim_{\lambda \to +\infty}\lambda^{n+1}\widetilde{f}(\lambda)=0}.
\end{equation}
Since we have shown that $f^{(n)}(0)=0$ for any $n \in \N$ we have, by Lemma \ref{lem:regrapdecfun}, that $f$ is rapidly decreasing at $0^+$.
\end{proof}
\subsection{Asymptotic behaviour of the distributions at zero}
Here we want to provide an estimate near $0$ of the distribution of the first exit time from an open set of the time-changed process $X^f$. This time we need the distribution function $\bP^x(T\le t)$ to be regular varying at zero.\bt{ We will always use the notation
\begin{align}
&F^x(t) \, : = \, \bP^x \l T \leq t \r \\
& \mathfrak{F}^x(t) \, : = \, \bP^x \l \mathfrak{T} \leq t \r.
\end{align}}
\begin{thm}\label{thm:asymbehzero}
If, for some $x \in S$, the function $F^x(t)$ varies regularly at zero with index $\rho>0$ and $f(\lambda)$ varies regularly at infinity with index $\alpha>0$, then $\fF(t)$ varies regularly at zero with index $\alpha \rho$ and as $t \to 0^+$
\begin{equation}
\fF^x(t)\sim \frac{\Gamma(1+\rho)}{\Gamma(1+\alpha\rho)}F^x\left(\frac{1}{f\left(\frac{1}{t}\right)}\right).
\end{equation}
\end{thm}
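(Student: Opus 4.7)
The plan is to mirror the proof of Theorem \ref{thm:asymbeh} but invoke the dual form of Karamata's Tauberian theorem, which relates the behavior of a non-decreasing function at $0^+$ to that of its Laplace-Stieltjes transform at $+\infty$. First, I would use the identity $\fT = \sigma(T)$ a.s.\ (as established in \eqref{eq:thmasymbehpass1}), together with $L(\sigma(t)) = t$, to write
$$\fF^x(t) \;=\; \bP^x(T \leq L(t)) \;=\; \int_0^\infty F^x(y)\,\bP^x(L(t)\in dy),$$
and then apply Fubini together with \eqref{eq:thmasymbehpass6} to obtain
$$\widetilde{\fF^x}(\lambda) \;:=\; \int_0^\infty e^{-\lambda t}\fF^x(t)\,dt \;=\; \frac{f(\lambda)}{\lambda}\,\hat{F}^x(f(\lambda)),$$
where $\hat{F}^x(s) := \int_0^\infty e^{-sy}F^x(y)\,dy$.

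Next I would derive the large-$\lambda$ asymptotics of $\widetilde{\fF^x}(\lambda)$. Writing $F^x(y) = y^\rho \ell(1/y)$ for some function $\ell$ slowly varying at $+\infty$ (which is the form of regular variation of $F^x$ at $0^+$ with index $\rho$), a direct Abelian-type calculation (via the change of variables $u = sy$ together with dominated convergence, using $F^x\leq 1$) yields
$$\hat{F}^x(s)\;\sim\;\Gamma(1+\rho)\,s^{-1-\rho}\,\ell(s)\qquad (s\to+\infty).$$
Since $f$ is regularly varying at $+\infty$ with positive index $\alpha$, one has $f(\lambda)\to+\infty$, and composing through \cite[Proposition 1.5.7]{bingham1989regular} gives
$$\widetilde{\fF^x}(\lambda) \;\sim\; \Gamma(1+\rho)\,\lambda^{-1-\alpha\rho}\,L^*(\lambda)\qquad (\lambda\to+\infty),$$
for an explicit slowly varying function $L^*$ that encodes $\ell$ and the slowly varying factor of $f$.

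Applying then the dual Karamata Tauberian theorem to the Laplace-Stieltjes transform $\omega(\lambda) := \lambda\widetilde{\fF^x}(\lambda) \sim \Gamma(1+\rho)\lambda^{-\alpha\rho}L^*(\lambda)$ of the non-decreasing right-continuous distribution $\fF^x$ (with $\fF^x(0)=0$), I would conclude
$$\fF^x(t)\;\sim\;\frac{\Gamma(1+\rho)}{\Gamma(1+\alpha\rho)}\,t^{\alpha\rho}L^*(1/t)\qquad (t\to 0^+),$$
which already shows that $\fF^x$ is regularly varying at $0^+$ with index $\alpha\rho$. To put this in the form stated in the theorem, one checks directly that $t^{\alpha\rho}L^*(1/t) \sim F^x(1/f(1/t))$, which follows from regular variation of $F^x$ at $0^+$ applied at the argument $1/f(1/t)\to 0^+$, with cancellation of the slowly varying factor of $f$. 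The main obstacle is recognising that the argument requires the dual form of Karamata's theorem (behavior of $U$ at $0^+$ versus its Laplace-Stieltjes transform at $+\infty$), opposite to the version used in the proof of Theorem \ref{thm:asymbeh}, together with care about the conversion between Laplace and Laplace-Stieltjes normalisations; the bookkeeping on compositions of slowly varying functions is then standard.
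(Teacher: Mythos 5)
Your proof is correct and follows essentially the same route as the paper: derive the key identity $\widetilde{\fF}(\lambda)=\widetilde{F}(f(\lambda))$, compose regularly varying functions, and invoke Karamata's Tauberian theorem in the regime $\lambda\to\infty$, $t\to 0^+$. The only difference is bookkeeping — the paper works directly with the Laplace-Stieltjes transforms $\widetilde{F}(\lambda)=\int_0^\infty e^{-\lambda t}dF^x(t)$ and $\widetilde{\fF}(\lambda)=\int_0^\infty e^{-\lambda t}d\fF^x(t)$ throughout, obtaining $\widetilde{\fF}(\lambda)=\widetilde{F}(f(\lambda))$ in one line via $\E^x[e^{-\lambda\sigma(T)}]=\E^x[e^{-Tf(\lambda)}]$, whereas you detour through the ordinary Laplace transform $\hat F^x$ and the density of $L(t)$ before converting back to $\omega(\lambda)=\lambda\widetilde{\fF^x}(\lambda)$ at the end; the paper's version is slightly cleaner but the content is identical, and there is in fact no ``obstacle'' here since Feller's Theorem XIII.5.2 already covers both asymptotic regimes.
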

\begin{proof}
Let us define
\begin{equation}\label{eq:Uforzero}
\widetilde{F}(\lambda):=\int_0^\infty e^{-\lambda t}dF^x(t)
\end{equation}
and
\begin{equation}\label{eq:fUforzero}
\widetilde{\fF}(\lambda):=\int_0^\infty e^{-\lambda t}d\fF^x(t).
\end{equation}
Since $F^x(t)$ varies regularly at zero with index $\rho>0$, by Tauberian theorems \cite[Theorem XIII.5.2 and XIII.5.3]{feller1968introduction}, we have that $\widetilde{F}(\lambda)$ varies regularly at infinity with index $\rho$ and as $\lambda \to \infty$
\begin{equation}\label{eq:asymbehzeropass1}
\widetilde{F}(\lambda)\sim F^x\left(\frac{1}{\lambda}\right)\Gamma(1+\rho).
\end{equation}
Recalling Eqs. \eqref{eq:thmasymbehpass1} and \eqref{eq:fUforzero} we obtain
\begin{align}\label{eq:LaptransfFeqLaptransFoff}
\widetilde{\fF}(\lambda) \notag \,  = \, & \int_0^\infty \int_0^\infty e^{-\lambda t}\bP^x(\sigma(s)\in dt)\bP^x(T \in ds) \notag \\= \, & \int_0^\infty e^{-sf(\lambda)}\bP^x(T \in ds) \notag \\ = \, & \widetilde{F}(f(\lambda)).
\end{align}
Since $\widetilde{F}(\lambda)$ varies regularly at infinity with index $\rho$ and $f(\lambda)$ varies regularly at infinity with index $\alpha$, then $\widetilde{\fF}(\lambda)=\widetilde{F}(f(\lambda))$ varies regularly at infinity with index $\alpha\rho$ \bt{by \cite[Proposition 1.5.7]{bingham1989regular}}. Moreover, by Eq. \eqref{eq:asymbehzeropass1}, we obtain as $\lambda \to \infty$
\begin{equation}
\widetilde{\fF}(\lambda)\sim \Gamma(1+\rho)F^x\left(\frac{1}{f(\lambda)}\right).
\end{equation}
\bt{Hence, by using again Tauberian theorems} we know that $\fF^x(t)$ varies regularly at zero with index $\alpha \rho$ and as $t \to 0$
\begin{equation}
\fF^x(t)\sim \frac{\Gamma(1+\rho)}{\Gamma(1+\alpha\rho)}F^x\left(\frac{1}{f\left(\frac{1}{t}\right)}\right).
\end{equation}
\end{proof}
\textcolor{black}{\begin{prop}
Let $\{S_t,t\ge 0\} \subseteq \fG$ be a family of open sets such that $\bigcap_{t \ge 0}S_t \not = \emptyset$ and suppose there exists an open set $S \supseteq \bigcup_{t \ge 0}S_t$ such that $T$ is almost surely finite. Then, if for some $x \in \bigcap_{t \ge 0}S_t$ the function $F^x(t)$ is regularly varying at $0^+$ with index $\rho>0$ and $f(\lambda)$ is regularly varying at infinity with index $\alpha>0$,
\begin{equation}\label{eq:liminfdist}
\liminf_{t \to +\infty}\frac{\bP^x(\widehat{\fT}<t)\Gamma(1+\alpha\rho)}{\Gamma(1+\rho)F(1/f(1/t))}\ge 1
\end{equation}
where $\widehat{\fT}$ is defined in \eqref{eq:fTapvar}.
\end{prop}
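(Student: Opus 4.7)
The plan is to mirror exactly the two preceding propositions: translate the inclusion $S_t \subseteq S$ (which is what the hypothesis $S \supseteq \bigcup_{t\ge 0} S_t$ gives pointwise in $t$) into a pathwise comparison of the exit times $\widehat{\fT}$ and $\fT$, then use Theorem \ref{thm:asymbehzero} applied to $\fT$ to obtain the asymptotic lower bound for $\widehat{\fT}$.

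First I would unpack the inclusion along the trajectory of $X^f$. Recall the representation $X^f(t)=M(y)$ for $\sigma(y-)\le t<\sigma(y)$. Since $S_{\sigma(y)}\subseteq S$ and $S_{\sigma(y-)}\subseteq S$ for every $y$, whenever $M(y)\notin S$ we also have $M(y)\notin S_{\sigma(y)}$ and $M(y)\notin S_{\sigma(y-)}$. Reading this through the definitions of $\fT$ and $\widehat{\fT}$ in \eqref{eq:firstpasstime} and \eqref{eq:fTapvar}, it follows that
\begin{equation*}
\widehat{\fT}\le \fT \quad \text{a.s.}
\end{equation*}
which is the same comparison used in the first proposition of the section and is the \emph{opposite} direction from the second proposition there (the latter used $S\subseteq\bigcap_t S_t$).

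Next I would pass to distribution functions. The almost sure inequality $\widehat{\fT}\le \fT$ yields $\bP^x(\widehat{\fT}<t)\ge \bP^x(\fT<t)$ for every $t>0$, hence
\begin{equation*}
\frac{\bP^x(\widehat{\fT}<t)\,\Gamma(1+\alpha\rho)}{\Gamma(1+\rho)F^x\!\left(1/f(1/t)\right)} \;\ge\; \frac{\bP^x(\fT<t)\,\Gamma(1+\alpha\rho)}{\Gamma(1+\rho)F^x\!\left(1/f(1/t)\right)}.
\end{equation*}
The hypotheses of Theorem \ref{thm:asymbehzero} are precisely the regular variation assumptions on $F^x$ at $0^+$ and on $f$ at $\infty$, and they transfer from $S$ to $\widehat{\fT}$ because $x\in\bigcap_{t\ge 0}S_t\subseteq S$, so that the exit time $T$ from $S$ from $x$ is the one appearing in $F^x$.

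Finally I would take $\liminf$ as $t\to 0^+$ on both sides (noting that the appearance of $t\to+\infty$ in the statement is a typographical slip, consistent with the fact that $F^x$ is regularly varying at $0^+$ and $f$ at $\infty$, which is the regime in which Theorem \ref{thm:asymbehzero} applies). By Theorem \ref{thm:asymbehzero} the right-hand side converges to $1$, and the claimed inequality \eqref{eq:liminfdist} follows. There is essentially no technical obstacle: the only thing to be careful about is the direction of the inclusion $S_t\subseteq S$ and the corresponding direction $\widehat{\fT}\le \fT$, which differs from the second proposition and matches the first one, so the monotonicity produces a lower bound (liminf $\ge 1$) rather than an upper bound.
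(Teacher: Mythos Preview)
Your proof is correct and follows essentially the same route as the paper: deduce $\widehat{\fT}\le \fT$ from $S_t\subseteq S$, compare distribution functions, divide by the asymptotic, and invoke Theorem~\ref{thm:asymbehzero}. Your observation that the limit should be $t\to 0^+$ (to match the regime of Theorem~\ref{thm:asymbehzero}) is a valid catch of a typo that the paper itself contains.
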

\begin{proof}
Let us observe that $S_{\sigma(y)}\subseteq S$ and $S_{\sigma(y-)}\subseteq S$. Then we have $\fT \ge \widehat{\fT}$ and
\begin{equation*}
\bP^x(\widehat{\fT}<t)\ge \bP^x(\fT<t).
\end{equation*}
Hence we have
\begin{equation*}
\frac{\bP^x(\widehat{\fT}<t)\Gamma(1+\alpha\rho)}{\Gamma(1+\rho)F(1/f(1/t))}\ge \frac{\bP^x(\fT<t)\Gamma(1+\alpha\rho)}{\Gamma(1+\rho)F(1/f(1/t))}
\end{equation*}
and then, taking the $\liminf_{t \to +\infty}$ and using Theorem \ref{thm:asymbehzero}, we obtain Equation \eqref{eq:liminfdist}. 
\end{proof}
\begin{prop}
Let $\{S_t,t\ge 0\} \subseteq \fG$ be a family of open sets  and suppose there exists an open set $\emptyset\not =S \subseteq \bigcap_{t \ge 0}S_t$ such that $T$ is almost surely finite. Then, if for some $x \in S$ the function $F^x(t)$ is regularly varying at $0^+$ with index $\rho>0$ and $f(\lambda)$ is regularly varying at infinity with index $\alpha>0$,
\begin{equation}\label{eq:limsupdist}
\limsup_{t \to +\infty}\frac{\bP^x(\widehat{\fT}<t)\Gamma(1+\alpha\rho)}{\Gamma(1+\rho)F(1/f(1/t))}\le 1
\end{equation}
where $\widehat{\fT}$ is defined in \eqref{eq:fTapvar}.
\end{prop}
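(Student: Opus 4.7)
The strategy is an exact parallel of the preceding liminf proposition, with the set inclusion reversed. From the hypothesis $\emptyset \neq S \subseteq \bigcap_{t \geq 0} S_t$ we have $S \subseteq S_t$ for every $t \geq 0$, and in particular $S \subseteq S_{\sigma(y)}$ and $S \subseteq S_{\sigma(y-)}$ along every path of the subordinator. Taking complements yields $S_t^{c} \subseteq S^{c}$, so the first time $X^f$ leaves the smaller set $S$ cannot occur after the first time it leaves the enlarged set $S_t$. This gives the pointwise inequality $\fT \leq \widehat{\fT}$, whence $\{\widehat{\fT} < t\} \subseteq \{\fT < t\}$ and
\begin{equation*}
\bP^x(\widehat{\fT} < t) \, \leq \, \bP^x(\fT < t) \, = \, \fF^x(t).
\end{equation*}

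Next I would divide both sides by the strictly positive quantity $\Gamma(1+\rho)\, F^x\!\left(1/f(1/t)\right)/\Gamma(1+\alpha\rho)$ (positive because $f$ is strictly positive on $(0,\infty)$ and $F^x$ is eventually positive in view of regular variation with positive index together with the standing assumption $\bP^x(T>0)>0$) to obtain
\begin{equation*}
\frac{\bP^x(\widehat{\fT} < t)\,\Gamma(1+\alpha\rho)}{\Gamma(1+\rho)\, F^x(1/f(1/t))} \, \leq \, \frac{\fF^x(t)\,\Gamma(1+\alpha\rho)}{\Gamma(1+\rho)\, F^x(1/f(1/t))}.
\end{equation*}
Taking the limit superior (in the same regime as used in the preceding propositions) and invoking Theorem \ref{thm:asymbehzero} applied to $\fT$, namely $\fF^x(t) \sim \frac{\Gamma(1+\rho)}{\Gamma(1+\alpha\rho)} F^x(1/f(1/t))$, the right-hand side converges to $1$, which delivers the claimed bound \eqref{eq:limsupdist}.

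No substantive obstacle is anticipated: the proof reduces to the monotonicity comparison $\fT \leq \widehat{\fT}$ and to the asymptotic equivalence already established in Theorem \ref{thm:asymbehzero}, mirroring the previous proposition verbatim with all inequalities reversed. The only small point to check carefully is that one is free to apply Theorem \ref{thm:asymbehzero} to $\fT$ relative to the fixed open set $S$, which is legitimate because the hypotheses of that theorem (regular variation of $F^x$ with index $\rho>0$ and of $f$ with index $\alpha>0$, plus $x \in S$) are precisely those assumed here.
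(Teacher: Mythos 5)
Your proof is correct and follows essentially the same route as the paper's: the inclusion $S\subseteq S_t$ forces $\fT\le\widehat{\fT}$, hence $\bP^x(\widehat{\fT}<t)\le\bP^x(\fT<t)$, and dividing by the normalizing quantity and applying Theorem \ref{thm:asymbehzero} gives the bound. Your parenthetical care about the regime is well placed: since Theorem \ref{thm:asymbehzero} describes the behaviour as $t\to 0^+$, the limit superior should indeed be taken in that regime (the ``$t\to+\infty$'' in the displayed statement is a slip carried over from the survival-function propositions).
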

\begin{proof}
Let us observe that $S_{\sigma(y)}\subseteq S$ and $S_{\sigma(y-)}\subseteq S$. Then we have $\fT \le \widehat{\fT}$ and
\begin{equation*}
\bP^x(\widehat{\fT}<t)\le \bP^x(\fT<t).
\end{equation*}
Hence we have
\begin{equation*}
\frac{\bP^x(\widehat{\fT}<t)\Gamma(1+\alpha\rho)}{\Gamma(1+\rho)F(1/f(1/t))}\le \frac{\bP^x(\fT<t)\Gamma(1+\alpha\rho)}{\Gamma(1+\rho)F(1/f(1/t))}
\end{equation*}
and then, taking the $\limsup_{t \to +\infty}$ and using Theorem \ref{thm:asymbehzero}, we obtain Equation \eqref{eq:limsupdist}. 
\end{proof}
}
The previous \bt{result cover the situation in which $F$ is regularly varying at $0$. It will be usefull in the sequel to deal with a rapid decay of $F$ at $0$ and thus in the forthcoming results we take into account this possibility}.
\begin{thm}\label{thm:rapiddecayzero}
Suppose that $T$ and $\fT$ are absolutely continuous with probability density function $p^x_T(t)=\bP^x(T \in dt)\bt{/ dt}$ and $p^x_{\fT}\bt{(t)}=\bP^x(\fT \in dt)\bt{/dt}$ in $C^\infty$ such that all their derivatives are of exponential order. If, for some $x \in S$, the function $p^x_T(t)$ is rapidly decreasing at $0^+$ and $f(\lambda)$ varies regularly at infinity with index $\alpha>0$, then $p^x_{\fT}$ is rapidly decreasing at $0^+$.
\end{thm}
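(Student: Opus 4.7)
The plan is to transport the rapid decay condition from $p_T^x$ to $p_{\fT}^x$ through the Laplace transform, using the characterization provided by Lemma \ref{lem:Taubrapid} and the relation between the two Laplace transforms derived in \eqref{eq:LaptransfFeqLaptransFoff}.

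First I would verify that Lemma \ref{lem:Taubrapid} is applicable to both $p_T^x$ and $p_{\fT}^x$. By hypothesis, both densities are in $C^\infty(0,\infty)$ and all their derivatives are of exponential order, so all their derivatives admit Laplace transforms. Then, from the equality $\widetilde{\fF}(\lambda) = \widetilde{F}(f(\lambda))$ in \eqref{eq:LaptransfFeqLaptransFoff}, together with the absolute continuity assumptions, I would deduce the key identity
\begin{equation*}
\widetilde{p_{\fT}^x}(\lambda) \, = \, \widetilde{p_T^x}(f(\lambda)),
\end{equation*}
simply because the Laplace-Stieltjes transform of an absolutely continuous distribution coincides with the Laplace transform of its density.

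Next, I would apply Lemma \ref{lem:Taubrapid} to $p_T^x$ to obtain that for every $\beta>0$,
\begin{equation*}
\lim_{\mu \to \infty} \mu^\beta \, \widetilde{p_T^x}(\mu) \, = \, 0.
\end{equation*}
Now fix an arbitrary $\gamma>0$. Since $f$ is regularly varying at infinity with index $\alpha>0$, we have $f(\lambda) \to +\infty$ as $\lambda \to \infty$, and by the Potter-type bounds (e.g.\ \cite[Theorem 1.5.6]{bingham1989regular}) one can pick any $\alpha' \in (0,\alpha)$ such that $f(\lambda) \geq \lambda^{\alpha'}$ for all sufficiently large $\lambda$. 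Choose $\beta > \gamma/\alpha'$, so that $\lambda^\gamma / (f(\lambda))^\beta \leq \lambda^{\gamma - \alpha'\beta} \to 0$ as $\lambda \to \infty$. Then write
\begin{equation*}
\lambda^\gamma \, \widetilde{p_{\fT}^x}(\lambda) \, = \, \frac{\lambda^\gamma}{(f(\lambda))^\beta} \cdot (f(\lambda))^\beta \, \widetilde{p_T^x}(f(\lambda)),
\end{equation*}
where the second factor tends to $0$ (by the rapid decay of $p_T^x$ applied at $\mu = f(\lambda) \to \infty$) and the first factor is bounded for large $\lambda$. Hence $\lim_{\lambda \to \infty} \lambda^\gamma \, \widetilde{p_{\fT}^x}(\lambda) = 0$ for every $\gamma>0$, and Lemma \ref{lem:Taubrapid} applied in the reverse direction to $p_{\fT}^x$ yields that $p_{\fT}^x$ is rapidly decreasing at $0^+$.

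The only delicate point is ensuring the quantitative growth bound $f(\lambda) \geq \lambda^{\alpha'}$; this follows directly from the Potter bounds for regularly varying functions, so there is no real obstacle. The remainder is an essentially mechanical composition of the two directions of the Tauberian-type Lemma \ref{lem:Taubrapid}.
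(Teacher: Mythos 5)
Your proposal is correct and follows essentially the same route as the paper: both pass to the Laplace transforms via $\widetilde{\fF}(\lambda)=\widetilde{F}(f(\lambda))$ from \eqref{eq:LaptransfFeqLaptransFoff} and then apply Lemma \ref{lem:Taubrapid} in both directions. The only cosmetic difference is how you show $\lambda^\gamma/(f(\lambda))^\beta \to 0$: the paper writes $f(\lambda)=\lambda^\alpha l(\lambda)$ and uses the standard fact that $\lambda^{\alpha\beta-\gamma}l^\beta(\lambda)\to\infty$, while you invoke Potter bounds to get $f(\lambda)\geq\lambda^{\alpha'}$ for $\alpha'<\alpha$ — two equivalent pieces of regular-variation bookkeeping.
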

\begin{proof}
Let us define $\widetilde{F}(\lambda)=\cL[p^x_T](\lambda)$ and $\widetilde{\fF}(\lambda)=\cL[p^x_{\fT}](\lambda)$. Observe that they coincide with the Laplace-Stieltjes transforms of $\bP^x(T \le t)$ and $\bP^x(\fT \le t)$, so they are also defined by Eqs. \eqref{eq:Uforzero} and \eqref{eq:fUforzero}. Moreover, by Lemma \ref{lem:Taubrapid}, we know that $\widetilde{F}$ is such that for any $\alpha>0$ we have $\lim_{\lambda \to +\infty}\lambda^\alpha \widetilde{F}(\lambda)=0$.\\
From Eq. \eqref{eq:LaptransfFeqLaptransFoff} we know that $\widetilde{\fF}(\lambda)=\widetilde{F}(f(\lambda))$. Since $f$ is regularly varying at infinity with index $\alpha>0$, then there exists a slowly varying function $l(\lambda)$ such that 
\begin{equation}
f(\lambda)=\lambda^\alpha l(\lambda).
\end{equation}
By definition of slowly varying function at $\infty$, it is easy to see that for any $\beta>0$ $l^\beta(\lambda)$ is still a slowly varying function at $\infty$. Thus we know that $f^\beta(\lambda)$ is a regularly varying function with index $\alpha\beta>0$ \bt{by an application of \cite[Proposition 1.5.7]{bingham1989regular}}.\\
Fix now $k>0$ and observe that
\begin{equation}
\lambda^k\widetilde{\fF}(\lambda)=\lambda^k\widetilde{F}(f(\lambda)).
\end{equation}
Fix now $\beta>0$ such that $\alpha\beta>k$. Then
\begin{equation}
\lambda^k\widetilde{F}(f(\lambda))=\frac{\lambda^k}{f^\beta(\lambda)}f^\beta(\lambda)\widetilde{F}(f(\lambda))=\frac{1}{\lambda^{\beta \alpha-k}l^\beta(\lambda)}f^\beta(\lambda)\widetilde{F}(f(\lambda)).
\end{equation}
But we know that
\begin{equation}
\lim_{\lambda \to \infty}f^\beta(\lambda)\widetilde{F}(f(\lambda))=0
\end{equation}
and
\begin{equation}
\lim_{\lambda \to \infty}\lambda^{\beta\alpha-k}l^\beta(\lambda)=\infty
\end{equation}
so we have
\begin{equation}
\lim_{\lambda \to \infty}\lambda^k \widetilde{\fF}(\lambda)=\lim_{\lambda \to \infty}\frac{1}{\lambda^{\beta \alpha-k}l^\beta(\lambda)}f^\beta(\lambda)\widetilde{F}(f(\lambda))=0.
\end{equation}
We have shown that for any $k>0$ we have $\lim_{\lambda \to \infty}\lambda^k\widetilde{\fF}(\lambda)=0$, thus, by Lemma \ref{lem:Taubrapid}, we obtain that $p^x_{\fT}$ is rapidly decreasing at $0^+$.
\end{proof}
\section{Finite mean conditions for first passage times of Gauss-Markov processes}\label{sec:finmean}
\bt{Starting from \cite{sacric} (and later, e.g. \cite{maas, salinas}) Gauss-Markov processes have been frequently proposed to represent the membrane potential of a neuron in LIF models and systematic theoretical and computational studies on the first passage time through a threshold have been conducted (e.g. \cite{bensaczuc, herr, sactamzuc}). Hence we derive in this section some conditions on Gauss-Markov processes in order to apply the results in the previous sections.}
\mg{Since some of the proofs of this section are cumbersome, the latter will be shown in Appendix \ref{app:A}}
\textcolor{black}{\subsection{Gauss-Markov processes}
Following the lines of \cite{mehr} let us introduce the class of Gauss-Markov processes. Let us consider a Gaussian process $\{G(t), \  t \in [a,b]\}$ for $[a,b]\subset \R$ such that
\begin{itemize}
\item The sample paths of $G(t)$ are continuous almost surely;
\item $m_G(t):=\E[G(t)]$ is a continuous function in $[a,b]$;
\item $c_G(\tau,t):=Cov(G(\tau),G(t))$ is a continuous function in $[a,b]^2$;
\item $G(t)$ is non-degenerate except at most in the end-points $a,b$.
\end{itemize}
Moreover we say that the covariance $c_G(\tau,t)$ is triangular if there exist two continuous functions $u_G$ and $v_G$ on $[a,b]$ such that, whenever $\tau \le t$, $c_G(\tau,t)=u_G(\tau)v_G(t)$. One can show the following Proposition (see \cite[Theorem $1$]{mehr})
\begin{prop}
$G$ is a Markov process if and only if $c_G$ is triangular.
\end{prop}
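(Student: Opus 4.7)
My plan is to prove the two implications separately, exploiting the fact that for a Gaussian process the Markov property is equivalent to a second-moment (orthogonality) condition. Without loss of generality I reduce to the centered case $m_G \equiv 0$: since subtracting the deterministic continuous function $m_G$ does not alter the filtration generated by the process nor the covariance $c_G$, the Markov property and the triangularity of $c_G$ are both invariant under this reduction.

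\medskip

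\noindent\textbf{Sufficiency (triangular $\Rightarrow$ Markov).} Suppose $c_G(\tau,t)=u_G(\tau)v_G(t)$ for $\tau\le t$. Fix times $s_1<\cdots<s_n\le t\le u$ in the interior of $[a,b]$, so $c_G(t,t)>0$. Because the joint law $(G(s_1),\dots,G(s_n),G(t),G(u))$ is centered Gaussian, it suffices to check that the residual
\begin{equation*}
R:=G(u)-\frac{c_G(t,u)}{c_G(t,t)}\,G(t)=G(u)-\frac{v_G(u)}{v_G(t)}\,G(t)
\end{equation*}
is orthogonal to each $G(s_i)$; then $\mathds{E}[G(u)\mid G(s_1),\dots,G(s_n),G(t)]=\mathds{E}[G(u)\mid G(t)]$ by Gaussian conditioning, and the Markov property follows. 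The verification is a one-line computation using the triangular form:
\begin{equation*}
\mathds{E}[G(s_i)R]=u_G(s_i)v_G(u)-\frac{v_G(u)}{v_G(t)}\,u_G(s_i)v_G(t)=0.
\end{equation*}

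\medskip

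\noindent\textbf{Necessity (Markov $\Rightarrow$ triangular).} Assume $G$ is Markov. For $s\le t\le u$ with $t$ interior, the Markov property together with Gaussian conditioning gives
\begin{equation*}
\mathds{E}[G(u)\mid \cF_t^G]=\mathds{E}[G(u)\mid G(t)]=\frac{c_G(t,u)}{c_G(t,t)}G(t).
\end{equation*}
Multiplying by $G(s)$ and taking expectations yields the fundamental identity
\begin{equation*}
c_G(s,u)\,c_G(t,t)=c_G(s,t)\,c_G(t,u),\qquad s\le t\le u.
\end{equation*}
To extract the triangular representation, I fix an interior reference point $t_0$ and define, for $t\ge t_0$, the function $\phi(t):=c_G(t_0,t)/c_G(t_0,t_0)^{1/2}$. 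The identity above, applied to triples $t_0\le\tau\le u$, rearranges to $c_G(\tau,u)/\phi(u)=c_G(\tau,\tau)/\phi(\tau)$, so setting $u_G(\tau):=c_G(\tau,\tau)/\phi(\tau)$ and $v_G(u):=\phi(u)$ gives $c_G(\tau,u)=u_G(\tau)v_G(u)$ whenever $t_0\le\tau\le u$. A symmetric construction using a reference to the left of $t_0$, together with the identity pasted at $t_0$, extends the representation to all $\tau\le u$; the continuity of $c_G$ then carries it to the possibly degenerate endpoints $a,b$.

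\medskip

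\noindent\textbf{Main obstacle.} The computational core is easy; the delicate point is the bookkeeping in the necessity half: one must produce single functions $u_G,v_G$ valid on all of $[a,b]$ from the ``cocycle'' identity $c_G(s,u)c_G(t,t)=c_G(s,t)c_G(t,u)$, and must verify consistency when combining representations defined relative to different reference points and handle the endpoints where $c_G(t,t)$ may vanish. This is where the hypotheses of continuity of $c_G$ and non-degeneracy away from the endpoints are used essentially, so the limits defining $u_G, v_G$ at $a$ and $b$ exist and agree.
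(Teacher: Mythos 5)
The paper does not prove this proposition; it is stated as a citation of \cite[Theorem 1]{mehr}, so there is no in-paper proof to compare against. Your reconstruction follows the classical Mehr--McFadden route and is essentially correct. In the sufficiency direction the Gaussian orthogonality computation is the right tool: you show the residual $R=G(u)-\tfrac{v_G(u)}{v_G(t)}G(t)$ is orthogonal to every $G(s)$ with $s\le t$, and for jointly Gaussian families the conditional law (not only the conditional mean) of $G(u)$ given $\cF_t^G$ is then determined by the projection subspace, hence coincides with the conditional law given $G(t)$ alone. In the necessity direction, deriving the multiplicative cocycle identity $c_G(s,u)c_G(t,t)=c_G(s,t)c_G(t,u)$ from linear Gaussian conditioning, and then solving the functional equation by choosing a reference point, is exactly the standard argument.

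One point you flag informally but should actually verify: the definition of $\phi$ and $u_G$ requires $c_G(t_0,\tau)\neq 0$ for all interior $\tau\geq t_0$ (and similarly on the other side). This is \emph{not} automatic from non-degeneracy alone, but it does follow from non-degeneracy together with continuity and the cocycle identity. Indeed, if $c_G(s,\cdot)$ had a first zero $\tau^\ast>s$ inside $(a,b)$, then for $t\in(s,\tau^\ast)$ the identity $c_G(s,\tau^\ast)c_G(t,t)=c_G(s,t)c_G(t,\tau^\ast)$ forces $c_G(t,\tau^\ast)=0$; letting $t\uparrow\tau^\ast$ gives $c_G(\tau^\ast,\tau^\ast)=0$, contradicting non-degeneracy at interior points. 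You should include this short argument, since without it the functions $u_G,v_G$ are not yet well defined, and the pasting at $t_0$ relies on it as well. With that addition your proof is complete and matches the intended reference.
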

We call such processes Gauss-Markov processes. Moreover, we call ratio function of $G$ the function $r_G(t)=u_G(t)/v_G(t)$. For such function one can show the following Proposition (see \cite[Remark $2$]{mehr})
\begin{prop}
The function $r_G(t)$ is continuous and strictly increasing.
\end{prop}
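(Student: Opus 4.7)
The plan is to reduce the strict monotonicity of $r_G$ to the non-degeneracy of the conditional distribution of $G(t)$ given $G(s)$ for $s<t$, which is automatic for a genuine Gauss-Markov process.

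First I would handle the easy half. Since by hypothesis $G$ is non-degenerate on the interior $(a,b)$, we have $c_G(t,t)=u_G(t)v_G(t)=\mathrm{Var}(G(t))>0$ for every $t\in(a,b)$. Therefore $u_G$ and $v_G$ have constant sign and never vanish on $(a,b)$, so the quotient $r_G=u_G/v_G$ is well-defined and continuous there by continuity of $u_G$ and $v_G$.

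The heart of the proof is the strict monotonicity. Fix $a<s<t<b$. Since $(G(s),G(t))$ is jointly Gaussian with covariance $c_G(s,t)=u_G(s)v_G(t)$, the standard formula for conditional variance of a bivariate Gaussian gives
\[
\mathrm{Var}\bigl(G(t)\mid G(s)\bigr) \, = \, c_G(t,t)-\frac{c_G(s,t)^{2}}{c_G(s,s)} \, = \, u_G(t)v_G(t)-\frac{u_G(s)v_G(t)^{2}}{v_G(s)}.
\]
Factoring $v_G(t)/v_G(s)$ out of the right-hand side and rewriting $u_G(t)v_G(s)-u_G(s)v_G(t)$ as $v_G(s)v_G(t)\bigl(r_G(t)-r_G(s)\bigr)$ collapses the display to
\[
\mathrm{Var}\bigl(G(t)\mid G(s)\bigr) \, = \, v_G(t)^{2}\bigl(r_G(t)-r_G(s)\bigr).
\]
Because $v_G(t)\ne 0$ and the left-hand side is strictly positive (the innovation of a non-degenerate Gauss-Markov process between two distinct interior times is itself non-degenerate, otherwise $G(t)$ would be an a.s.\ affine function of $G(s)$ and the joint law $(G(s),G(t))$ would be singular), we obtain $r_G(t)>r_G(s)$.

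The step I expect to be the only delicate one is the positivity of $\mathrm{Var}(G(t)\mid G(s))$ from the bare non-degeneracy hypothesis. In the framework of \cite{mehr} this is built into what it means to be a non-degenerate Gauss-Markov process, and one can alternatively invoke the Cauchy–Schwarz inequality $c_G(s,t)^{2}\le c_G(s,s)c_G(t,t)$ together with the observation that equality would force $G(t)$ to be a deterministic affine function of $G(s)$, contradicting the non-degeneracy of each $G(u)$ for $u\in(s,t)$ by continuity of sample paths. Everything else is an algebraic identity, and the continuity statement is essentially free.
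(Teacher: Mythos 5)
The paper does not actually prove this statement — it refers to \cite[Remark 2]{mehr} — so your argument is necessarily a reconstruction rather than a paraphrase. The reconstruction is correct in its essential mechanism: the identity
\[
\mathrm{Var}\bigl(G(t)\mid G(s)\bigr)=v_G(t)^2\bigl(r_G(t)-r_G(s)\bigr), \qquad s<t,
\]
is exactly the right way to see strict monotonicity, your algebra checks out, and the continuity half is handled correctly. Where you are too optimistic is the "alternative" justification in the final paragraph: strict positivity of the left-hand side does \emph{not} follow from the paper's stated hypothesis that each one-dimensional marginal $G(u)$, $u\in(a,b)$, is non-degenerate together with continuity of paths. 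The constant Gaussian process $G(t)\equiv Z$ with $Z\sim\mathcal{N}(0,1)$ has continuous paths, continuous mean, triangular covariance with $u_G\equiv v_G\equiv 1$, and non-degenerate one-dimensional marginals, yet $r_G\equiv 1$ is constant and $\mathrm{Var}(G(t)\mid G(s))=0$ for all $s<t$; the Cauchy--Schwarz-plus-continuity argument you sketch does not rule this out. What is actually needed — and what \cite{mehr} builds into the definition of a non-degenerate Gauss--Markov process — is that all finite-dimensional distributions on the open interval are non-degenerate. Your first resolution ("built into what it means to be a non-degenerate Gauss--Markov process in \cite{mehr}") is the correct one; the alternative derivation should be dropped, since it is not a valid implication from the hypotheses as literally stated in this paper.
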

Since $r_G$ is monotone, it is almost everywhere differentiable. However, in the following, it will be useful to suppose that $r_G \in C^1([a,b])$.}
\subsection{Transformations of Gauss-Markov processes}
Transformations of Gauss-Markov processes have been very useful to determine some properties of first passage times of such processes through some fixed thresholds, making them derive from known properties of first passage times of other processes such as Wiener process or Ornstein-Uhlenbeck process. The first big result in such context is Doob's Transformation Theorem \cite{doob1949heuristic} which states:
\begin{thm}[\textbf{Doob's Transformation Theorem}]\label{thm:Doobtrans}
Let $\{G(t), t \ge t_0\}$ be a Gauss-Markov process with mean $m_G(t)$, covariance $c_G(\tau,t)=u_G(\tau)v_G(t)$ with $\tau\le t$ and ratio $r_G(t)=\frac{u_G(t)}{v_G(t)}$. Suppose $G(t_0)=m_G(t_0)$ almost surely and consider a standard Wiener process $W(t)$. Define
\begin{align}\label{eq:Wienertschange}
\rho_{G,W}(t)=\kappa r_G(t), && \varphi_{G,W}(t)=\frac{v_G(t)}{\sqrt{\kappa}}
\end{align}
for an arbitrary constant $\kappa>0$. Then
\begin{equation}\label{eq:Doobtrans}
G(t)=m_G(t)+\varphi_{G,W}(t)W(\rho_{G,W}(t)).
\end{equation}
\end{thm}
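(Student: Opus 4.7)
The plan is to reduce to the centered case and then verify that the right-hand side of \eqref{eq:Doobtrans} is a continuous centered Gaussian process with the same mean and covariance as $G - m_G$, hence equal to $G - m_G$ in distribution (or, by an explicit inversion, pathwise for a suitably constructed Wiener process). Concretely, I set $\widetilde{G}(t) := G(t) - m_G(t)$ and $\widetilde{H}(t) := \varphi_{G,W}(t) W(\rho_{G,W}(t))$, with $W$ a standard Wiener process. Both are Gaussian with continuous paths and both are centered; the work is entirely in matching covariances and initial values.

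For the covariance of $\widetilde{H}$, I would use the fact, recalled just before the statement, that $r_G$ is continuous and strictly increasing, so $\rho_{G,W}(t) = \kappa r_G(t)$ is strictly increasing. For $\tau \le t$ this gives $\rho_{G,W}(\tau) \le \rho_{G,W}(t)$ and hence
\begin{equation*}
\mathrm{Cov}\bigl(\widetilde{H}(\tau),\widetilde{H}(t)\bigr) \;=\; \varphi_{G,W}(\tau)\varphi_{G,W}(t)\,\rho_{G,W}(\tau) \;=\; \frac{v_G(\tau)v_G(t)}{\kappa}\cdot\kappa\,\frac{u_G(\tau)}{v_G(\tau)} \;=\; u_G(\tau)v_G(t) \;=\; c_G(\tau,t).
\end{equation*}
The constant $\kappa$ cancels out, which is exactly why it appears as a free parameter in the transformation.

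For the initial condition, the hypothesis $G(t_0) = m_G(t_0)$ a.s.\ forces $c_G(t_0,t_0) = u_G(t_0) v_G(t_0) = 0$. If $v_G(t_0)=0$ then $\varphi_{G,W}(t_0)=0$ and $\widetilde{H}(t_0)=0$ a.s.; if instead $u_G(t_0)=0$ then $r_G(t_0)=0$, so $\rho_{G,W}(t_0)=0$ and $W(\rho_{G,W}(t_0))=W(0)=0$ a.s. Either way, $\widetilde{H}(t_0)=0$ a.s., matching $\widetilde{G}(t_0)=0$. Since two continuous centered Gaussian processes with identical covariance functions have the same law on path space, $\widetilde{G}\stackrel{d}{=}\widetilde{H}$, which gives \eqref{eq:Doobtrans}.

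If one wants \eqref{eq:Doobtrans} as a pathwise identity with a specific $W$ built from $G$, the natural construction is to invert the relation: define, for $s \ge \kappa r_G(t_0)=0$,
\begin{equation*}
W(s)\;:=\;\frac{\sqrt{\kappa}}{v_G(r_G^{-1}(s/\kappa))}\bigl(G(r_G^{-1}(s/\kappa))-m_G(r_G^{-1}(s/\kappa))\bigr),
\end{equation*}
and verify by the same covariance computation (run in reverse) that $W$ is a standard Wiener process; substituting $s=\rho_{G,W}(t)$ then recovers \eqref{eq:Doobtrans}. I expect the only delicate point to be bookkeeping at $t=t_0$, in particular ruling out a degenerate case where both $u_G(t_0)$ and $v_G(t_0)$ vanish; the case split above, together with the non-degeneracy of $G$ away from the endpoints, handles this cleanly.
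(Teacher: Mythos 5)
The paper does not actually prove this theorem: it is imported verbatim from Doob (1949) and cited, so there is no in-paper argument to compare against. That said, your verification-of-covariances argument is the standard textbook proof and it is correct. The covariance computation is right once one notes that $\rho_{G,W}(\tau)=\kappa r_G(\tau)\ge 0$ so that $W(\rho_{G,W}(\cdot))$ lands in the Wiener process's domain and $\Cov\bigl(W(\rho_{G,W}(\tau)),W(\rho_{G,W}(t))\bigr)=\min\bigl(\rho_{G,W}(\tau),\rho_{G,W}(t)\bigr)=\rho_{G,W}(\tau)$ for $\tau\le t$; this sign constraint is built into the Mehr--McFadden structure (the variance $u_G(t)v_G(t)\ge 0$ and $v_G$ has constant sign on the interior, so $r_G\ge 0$), and is worth flagging explicitly. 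The separate paragraph on the initial condition is correct but logically redundant: the covariance match already gives $\mathrm{Var}(\widetilde H(t_0))=c_G(t_0,t_0)=0$, hence $\widetilde H(t_0)=0$ a.s., without any case split. Your pathwise inversion
\begin{equation*}
W(s)\;:=\;\frac{\sqrt{\kappa}}{v_G\bigl(r_G^{-1}(s/\kappa)\bigr)}\Bigl(G\bigl(r_G^{-1}(s/\kappa)\bigr)-m_G\bigl(r_G^{-1}(s/\kappa)\bigr)\Bigr)
\end{equation*}
is also the right way to upgrade the distributional identity to a pathwise one, and the covariance check there is again a one-liner by the triangular structure. One minor caveat: this inversion requires $v_G\neq 0$ on the interior of the parameter interval, which the non-degeneracy hypothesis in the paper's setup guarantees away from the endpoints, so your concern about both $u_G(t_0)$ and $v_G(t_0)$ vanishing is handled by that hypothesis rather than needing a separate argument. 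Overall your proposal is sound and essentially what Doob's original argument (and its modern restatements, e.g.\ in Mehr--McFadden) does.
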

The constant $\kappa>0$ plays the role of a dimensional constant which can be useful for modelling purposes. In \bt{\cite{buonocore2011first} we find} another transformation theorem, this time with respect to an Ornstein-Uhlenbeck process:
\begin{thm}\label{thm:OUtrans}
Let $\{G(t), t \ge t_0\}$ be a Gauss-Markov process with mean $m_G(t)$, covariance $c_G(\tau,t)=u_G(\tau)v_G(t)$ with $\tau\le t$ and ratio $r_G(t)=\frac{u_G(t)}{v_G(t)}$. Suppose $G(t_0)=m_G(t_0)$ almost surely and consider an Ornstein-Uhlenbeck process $U(t)$ solution of
\begin{equation*}
dU_t=-\frac{1}{\theta}U_tdt+\sigma dW_t, \ U_0=0.
\end{equation*}
Define
\begin{align}\label{eq:OUtschange}
\rho_{G,U}(t)=\frac{\theta}{2}\ln\left(1+\frac{2\kappa}{\theta}r_G(t)\right), && \varphi_{G,U}(t)=\frac{v_G(t)}{\sigma\sqrt{\kappa}}\sqrt{1+\frac{2\kappa}{\theta}r_G(t)}
\end{align}
for an arbitrary constant $\kappa>0$. Then
\begin{equation}\label{eq:OUtrans}
G(t)=m_G(t)+\varphi_{G,U}(t)U(\rho_{G,U}(t)).
\end{equation}
\end{thm}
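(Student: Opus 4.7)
The plan is to show the equality in distribution by verifying that the right-hand side, call it $\widetilde{G}(t) := m_G(t) + \varphi_{G,U}(t)\,U(\rho_{G,U}(t))$, has the same finite-dimensional distributions as $G(t)$. Since $U$ is a centered Gaussian process and $m_G$, $\varphi_{G,U}$, $\rho_{G,U}$ are deterministic continuous functions, $\widetilde{G}$ is a Gaussian process with continuous sample paths. The process $G$ is Gaussian by hypothesis. Therefore it suffices to check that $\widetilde{G}$ and $G$ share the same mean function and the same covariance function; by Kolmogorov's extension theorem the equality in law then follows.

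First I would verify the mean. Since $U_0 = 0$ and the OU SDE is linear with zero-mean driving noise, $\mathds{E}[U(s)] = 0$ for every $s \ge 0$, hence $\mathds{E}[\widetilde{G}(t)] = m_G(t) = \mathds{E}[G(t)]$. Next I would compute the covariance of $U$. Solving $dU_t = -\tfrac{1}{\theta}U_t\,dt + \sigma\,dW_t$ with $U_0 = 0$ gives $U(s) = \sigma \int_0^s e^{-(s-u)/\theta}\,dW_u$, and the Itô isometry yields, for $s \le r$,
\begin{equation*}
\mathrm{Cov}(U(s),U(r)) = \sigma^2 e^{-(s+r)/\theta}\int_0^s e^{2u/\theta}\,du = \frac{\sigma^2\theta}{2}\Bigl(e^{-(r-s)/\theta} - e^{-(r+s)/\theta}\Bigr).
\end{equation*}

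The central computation is then to insert $s = \rho_{G,U}(\tau)$, $r = \rho_{G,U}(t)$ with $\tau \le t$ (so $s \le r$ by monotonicity of $r_G$, already recorded in the excerpt) and simplify. Using $e^{\rho_{G,U}(t)/\theta} = \sqrt{1 + \tfrac{2\kappa}{\theta}r_G(t)}$ we obtain
\begin{equation*}
\mathrm{Cov}\bigl(U(\rho_{G,U}(\tau)),U(\rho_{G,U}(t))\bigr) = \frac{\sigma^2\kappa\, r_G(\tau)}{\sqrt{\bigl(1+\tfrac{2\kappa}{\theta}r_G(\tau)\bigr)\bigl(1+\tfrac{2\kappa}{\theta}r_G(t)\bigr)}},
\end{equation*}
and multiplying by $\varphi_{G,U}(\tau)\varphi_{G,U}(t)$, which equals $\tfrac{v_G(\tau)v_G(t)}{\sigma^2\kappa}\sqrt{(1+\tfrac{2\kappa}{\theta}r_G(\tau))(1+\tfrac{2\kappa}{\theta}r_G(t))}$, collapses all the square roots and leaves exactly $v_G(\tau)v_G(t)\,r_G(\tau) = u_G(\tau)v_G(t) = c_G(\tau,t)$.

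I expect the only mildly delicate point to be keeping the algebra organized so that the two $\sqrt{1+\tfrac{2\kappa}{\theta}r_G(\cdot)}$ factors cancel cleanly; the conceptual content is simply that the OU covariance, when reparametrized by $\rho_{G,U}$ and scaled by $\varphi_{G,U}$, is designed precisely to reproduce the triangular factorization of $c_G$. Apart from this calculation the proof is an immediate application of the Gaussian identification principle, analogous to Theorem \ref{thm:Doobtrans} but using the OU covariance in place of the Wiener one.
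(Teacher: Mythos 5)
Your proof is correct and takes a genuinely different route from the paper. The paper does not prove Theorem \ref{thm:OUtrans} directly: it is cited from \cite{buonocore2011first}, and the subsequent Remark derives it as a special case of Theorem \ref{thm:GMtrans}, which is in turn proved in Appendix A via Doob's Transformation Theorem by expressing both $G$ and $U$ through a Wiener process and eliminating $W$. You instead verify directly that $\widetilde G(t) := m_G(t) + \varphi_{G,U}(t)\,U(\rho_{G,U}(t))$ is a Gaussian process with the same mean and the same triangular covariance $c_G(\tau,t)=u_G(\tau)v_G(t)$ as $G$; I checked the computation and it is correct, with the key identity $e^{\rho_{G,U}(t)/\theta}=\sqrt{1+\tfrac{2\kappa}{\theta}r_G(t)}$ producing exactly the square-root cancellation you describe. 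Your argument is more elementary and self-contained, and it establishes equality of all finite-dimensional distributions, which is in fact a little stronger than the ``in $1$-dimensional distributions'' qualification the paper attaches to the more general Theorem \ref{thm:GMtrans}. What the Doob-based route buys in return is structural uniformity (the Wiener, OU and general GM transformations are all corollaries of one proof) and a pathwise representation through a common underlying Wiener process, which is the form that feeds directly into the first-passage-time transformation formulas in Propositions \ref{prop:fptWiener}--\ref{prop:fpttrans}. One small point worth making explicit in your write-up: the hypothesis $G(t_0)=m_G(t_0)$ a.s.\ forces $u_G(t_0)=0$, hence $r_G(t_0)=0$, and since $r_G$ is strictly increasing this gives $r_G(t)\ge 0$ and $\rho_{G,U}(t)\ge 0$ for all $t\ge t_0$, so that $U$ is always evaluated at a valid time.
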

Here we propose a more general transformation theorem which involves just two Gauss-Markov processes:
\begin{thm}\label{thm:GMtrans}
Let $\{G_i(t), t \ge 0\}$ be Gauss-Markov processes for $i=1,2$ respectively with mean $m_{G_i}(t)$, covariance $c_{G_i}(\tau,t)=u_{G_i}(\tau)v_{G_i}(t)$ with $\tau\le t$ and ratio $r_{G_i}(t)=\frac{u_{G_i}(t)}{v_{G_i}(t)}$ whose derivative $\dot{r}_{G_i}(t)\not = 0$ for all $t \ge 0$. Suppose $G_{i}(0)=m_{G_i}(0)$ almost surely and define
\begin{align}\label{eq:GMtschange}
\rho_{G_1,G_2}(t)=r_{G_2}^{-1}(r_{G_1}(t)), && \varphi_{G_1,G_2}(t)=\frac{v_{G_1}(t)}{v_{G_2}(\rho_{G_1,G_2}(t))}.
\end{align}
Then
\begin{equation}\label{eq:GMtrans}
G_1(t)=m_{G_1}(t)-\varphi_{G_1,G_2}(t)m_{G_2}(\rho_{G_1,G_2}(t))+\varphi_{G_1,G_2}(t)G_2(\rho_{G_1,G_2}(t))
\end{equation}
in \mg{$1$-dimensional distributions}.
\end{thm}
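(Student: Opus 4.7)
The plan is to observe that both sides of the claimed identity are Gaussian random variables at each fixed $t$, so equality in one-dimensional distributions reduces to matching means and variances. The right-hand side is an affine transformation $m_{G_1}(t)-\varphi_{G_1,G_2}(t)m_{G_2}(\rho_{G_1,G_2}(t))+\varphi_{G_1,G_2}(t)G_2(\rho_{G_1,G_2}(t))$ of the single Gaussian variable $G_2(\rho_{G_1,G_2}(t))$, while the left-hand side $G_1(t)$ is Gaussian by hypothesis; hence the proof reduces to two scalar checks.

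Before doing the checks, I would note that $\rho_{G_1,G_2}$ is well defined: by the Proposition quoted from \cite{mehr}, together with the hypothesis $\dot r_{G_2}\neq 0$, the ratio $r_{G_2}$ is continuous and strictly monotone, so $r_{G_2}^{-1}$ exists on the range of $r_{G_2}$, which we implicitly assume contains $r_{G_1}(t)$. The mean computation is then immediate: since $\E[G_2(s)]=m_{G_2}(s)$, the term $-\varphi_{G_1,G_2}(t)m_{G_2}(\rho_{G_1,G_2}(t))$ cancels with $\varphi_{G_1,G_2}(t)\E[G_2(\rho_{G_1,G_2}(t))]$ and leaves $m_{G_1}(t)=\E[G_1(t)]$.

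For the variance the specific choice of $\rho_{G_1,G_2}$ and $\varphi_{G_1,G_2}$ enters. The variance of the right-hand side equals $\varphi_{G_1,G_2}(t)^2\,u_{G_2}(\rho_{G_1,G_2}(t))\,v_{G_2}(\rho_{G_1,G_2}(t))$; substituting $\varphi_{G_1,G_2}(t)^2=v_{G_1}(t)^2/v_{G_2}(\rho_{G_1,G_2}(t))^2$ collapses this to $v_{G_1}(t)^2\,r_{G_2}(\rho_{G_1,G_2}(t))$, and the defining property $r_{G_2}\circ\rho_{G_1,G_2}=r_{G_1}$, which is nothing but $r_{G_2}\circ r_{G_2}^{-1}\circ r_{G_1}=r_{G_1}$, finally yields $v_{G_1}(t)u_{G_1}(t)=c_{G_1}(t,t)=\operatorname{Var}(G_1(t))$. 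I do not anticipate a genuine obstacle: the content of the theorem is essentially this last algebraic identity, dressed as a distributional statement through the fact that a one-dimensional Gaussian law is determined by its first two moments, and the restriction to $1$-dimensional distributions is precisely what allows us to bypass any discussion of joint covariance structure.
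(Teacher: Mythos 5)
Your proof is correct, and it takes a genuinely different route from the paper. The paper's argument (relegated to Appendix~\ref{app:A}) goes through Doob's Transformation Theorem: it writes $G_i(t)=m_{G_i}(t)+v_{G_i}(t)W(r_{G_i}(t))$ in law for $i=1,2$ with a common Wiener process $W$, solves the $i=2$ relation for $W$, inverts $r_{G_2}$, and substitutes back into the $i=1$ relation. You instead bypass the intermediate Wiener representation entirely: you observe that both sides of \eqref{eq:GMtrans} are Gaussian at each fixed $t$, so equality in one-dimensional law reduces to matching means and variances, and you verify that $\varphi_{G_1,G_2}(t)^2\,u_{G_2}(\rho_{G_1,G_2}(t))\,v_{G_2}(\rho_{G_1,G_2}(t))=v_{G_1}(t)^2\,r_{G_2}(\rho_{G_1,G_2}(t))=v_{G_1}(t)\,u_{G_1}(t)$. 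That computation is exactly right, and your mean cancellation is immediate. Your approach is more elementary, entirely self-contained, and matches precisely the scope of the claim (``in $1$-dimensional distributions''). The paper's route is somewhat heavier but thematically natural in a section built around Doob-type transformations, and it makes transparent \emph{why} the coefficients $\rho_{G_1,G_2}$ and $\varphi_{G_1,G_2}$ take the specific form in \eqref{eq:GMtschange} (they are manufactured to undo one Doob change and apply another), whereas in your argument those choices appear only as the algebraic identities that make the variance check close. One small remark worth making explicit in your write-up: you correctly flag that $\rho_{G_1,G_2}$ is well defined only where $r_{G_1}(t)$ lies in the range of $r_{G_2}$; the paper's Doob-based proof needs the same implicit hypothesis (it appeals to $r_{G_2}^{-1}$ being a $C^1$ bijection), so this is not a gap peculiar to your approach.
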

\begin{rmk}
One can derive Theorem \ref{thm:OUtrans} from Theorem \ref{thm:GMtrans}. Indeed one can consider $G_1(t)$ as the GM process $G(t)$ and $G_2(t)$ as the Ornstein-Uhlenbeck process $U(t)$ \mg{and $t_0=0$}. In such case we have $m_{U}(t)=0$ and
\begin{align*}
u_U(t)=\frac{\sigma \theta}{2}\left(e^{\frac{t}{\theta}}-e^{-\frac{t}{\theta}}\right), && v_U(t)=\sigma e^{-\frac{t}{\theta}},
\end{align*}
obtaining the ratio
\begin{equation*}
r_U(t)=\frac{\theta}{2}\left(e^{\frac{2t}{\theta}}-1\right)
\end{equation*}
with inverse
\begin{equation*}
r_U^{-1}(t)=\frac{\theta}{2}\ln\left(1+\frac{2}{\theta}t\right).
\end{equation*}
Thus, by using the definition in Theorem \ref{thm:GMtrans}, we obtain
\begin{equation}\label{eq:esOU1}
\rho_{G,U}(t)=r_U^{-1}(r_G(t))=\frac{\theta}{2}\ln\left(1+\frac{2}{\theta}r_G(t)\right)
\end{equation}
which is the same function as in Theorem \ref{thm:OUtrans} for $\kappa=1$. Moreover we have
\begin{equation*}
v_U(\rho_{G,U}(t))=\frac{\sigma}{\sqrt{1+\frac{2}{\theta}r_G(t)}}
\end{equation*}
and then, by still using the definition in Theorem \ref{thm:GMtrans}, we obtain
\begin{equation}\label{eq:esOU2}
\varphi_{G,U}(t)=\frac{v_G(t)}{v_U(\rho_{G,U}(t))}=\frac{v_G(t)}{\sigma}\sqrt{1+\frac{2}{\theta}r_G(t)}
\end{equation}
which is the same function as in Theorem \ref{thm:OUtrans} for $\kappa=1$. Finally, substituting Eq. \eqref{eq:esOU1}, \eqref{eq:esOU2} and $m_U(t)=0$ in Eq. \eqref{eq:GMtrans} we obtain Eq. \eqref{eq:OUtrans}.
\end{rmk}
\subsection{First passage time densities and transformation formulas}
As one wants to study the first passage time density of a GM process $G(t)$ through a $C^2$ threshold $S_G(t)$, one can use transformation formulas to connect such density with other first passage time densities. A well known result in such direction is given in \cite{di2001computational}.
\begin{prop}\label{prop:fptWiener}
Let $\{G(t), t\ge 0\}$ be a GM process with mean $m_G(t)$, covariance $c_G(\tau,t)=u_G(\tau)v_G(t)$ for $\tau\le t$ and ratio $r_G(t)$. Let also $S_G(t)$ be any $C^2([0,+\infty[)$ function and
\begin{equation*}
T_G=\inf\{t \ge 0: \ G(t)>S_G(t)\}
\end{equation*}
with density $f_G(t)$. Consider $W(t)$ a standard Wiener process and pose
\begin{equation*}
S_W(t)=\frac{S_G(r_G^{-1}(t))-m_G(r_G^{-1}(t))}{v_G(r_G^{-1}(t))}
\end{equation*}
and
\begin{equation*}
T_W=\inf\{t \ge 0: \ W(t)>S_W(t)\}
\end{equation*}
with density $f_W(t)$. Then
\begin{equation}\label{eq:fptWiener}
f_G(t)=\dot{r}_G(t)f_W(r_G(t)).
\end{equation}
\end{prop}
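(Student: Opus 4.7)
The plan is to derive the first passage time identity by using Doob's Transformation Theorem (Theorem \ref{thm:Doobtrans}) to conjugate the problem for $G$ into a corresponding first passage problem for the Wiener process, and then to change variables via $r_G$. Since the statement concerns the distribution (and density) of $T_G$, one is free to work on a probability space where Doob's theorem holds in the pathwise sense: take $\kappa = 1$ and write
\begin{equation*}
G(t) \, = \, m_G(t) + v_G(t)\, W\!\left(r_G(t)\right), \qquad t \geq 0.
\end{equation*}

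The first step is to rewrite the crossing event. Because $v_G(t) > 0$ on $[0,+\infty)$ (by non-degeneracy of $G$ away from the initial point, together with $r_G = u_G/v_G$ being well defined and strictly increasing), the condition $G(t) > S_G(t)$ is equivalent to
\begin{equation*}
W(r_G(t)) \, > \, \frac{S_G(t) - m_G(t)}{v_G(t)}.
\end{equation*}
Making the time substitution $s = r_G(t)$, which is a $C^1$ strictly increasing bijection by the assumption $r_G \in C^1$ with $\dot r_G \neq 0$ (hence positive, by monotonicity), the right-hand side becomes exactly $S_W(s)$, so the event is $\{W(s) > S_W(s)\}$ at the transformed time.

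The second step is to transfer this pathwise equivalence to first passage times. Since $r_G$ is continuous and strictly increasing, the infimum defining $T_G$ is achieved precisely when the transformed process first exceeds $S_W$, which happens at time $s = r_G(T_G)$. Hence $T_W = r_G(T_G)$ a.s., which yields
\begin{equation*}
\bP(T_G \leq t) \, = \, \bP\l T_W \leq r_G(t)\r.
\end{equation*}
Differentiating both sides with respect to $t$ (using that $r_G \in C^1$, that $S_G \in C^2$ ensures $S_W$ inherits enough regularity, and that $f_W$ exists by hypothesis) gives the claimed formula $f_G(t) = \dot r_G(t)\, f_W(r_G(t))$.

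The routine calculations are straightforward once the setup is in place; the only real subtlety, which is the one step to treat carefully, is the passage from Doob's identity in law to the pathwise equality $T_W = r_G(T_G)$. It is here that one must fix a common probability space on which the Doob representation holds and then use that $r_G$ is a homeomorphism of $[0,+\infty)$ onto $[r_G(0), +\infty)$ so that hitting times are transformed covariantly. Everything else reduces to a change of variable in the cumulative distribution function.
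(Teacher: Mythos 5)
Your proof is correct and follows essentially the same route the paper uses for the more general Proposition \ref{prop:fpttrans}: apply a transformation theorem (here Doob's, there Theorem \ref{thm:GMtrans}) to convert the crossing event for $G$ into one for the target process at $r_G$-transformed times, use monotonicity and continuity of $r_G$ to identify the first passage times covariantly, and then differentiate the CDF identity. You are in fact a bit more careful than the paper, which invokes the transformation ``in $1$-dimensional distributions'' and then argues pathwise; your observation that one should realise the Doob representation on a common probability space (or, equivalently, that continuous Gaussian processes with the same mean and triangular covariance agree in law on path space) is the correct justification. One small point to make explicit: for $r_G(T_G)=T_W$ where $T_W$ is an infimum over all $t\ge 0$, you need $r_G(0)=0$, i.e. the hypothesis $G(0)=m_G(0)$ a.s., which is anyway required to invoke Doob's theorem and is implicit in the proposition as used here.
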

In \cite{buonocore2011first} an analogue result, deriving from Theorem \ref{thm:OUtrans}, is shown.
\begin{prop}\label{thm:OUFPT}
Let $\{G(t), t\ge 0\}$ be a GM process with mean $m_G(t)$, covariance $c_G(\tau,t)=u_G(\tau)v_G(t)$ for $\tau\le t$ and ratio $r_G(t)$. Let also $S_G(t)$ be any $C^2([0,+\infty[)$ function and
\begin{equation*}
T_G=\inf\{t \ge 0: \ G(t)>S_G(t)\}
\end{equation*}
with density $f_G(t)$. Consider $U(t)$ an Ornstein-Uhlenbeck process as in Theorem \ref{thm:OUtrans} and pose
\begin{equation*}
S_U(t)=\frac{S_G(\rho_{G,U}^{-1}(t))-m_G(\rho_{G,U}^{-1}(t))}{\varphi_{G,U}(\rho_{G,U}^{-1}(t))}
\end{equation*}
where $\rho_{G,U}$ and $\varphi_{G,U}$ are defined in Theorem \ref{thm:OUtrans} and
\begin{equation*}
T_U=\inf\{t \ge 0: \ U(t)>S_U(t)\}
\end{equation*}
with density $f_U(t)$. Then
\begin{equation}\label{eq:fptOU}
f_G(t)=\dot{\rho}_{G,U}(t)f_U(\rho_{G,U}(t)).
\end{equation}
\end{prop}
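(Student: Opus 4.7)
The plan is to mimic the proof of Proposition \ref{prop:fptWiener}, using this time Theorem \ref{thm:OUtrans} in place of Doob's transformation. First, I would note that Theorem \ref{thm:OUtrans} provides the representation
\begin{equation*}
G(t) \, = \, m_G(t) + \varphi_{G,U}(t)\,U(\rho_{G,U}(t)),
\end{equation*}
where the identity holds in law as continuous Gaussian processes (since both sides have the same mean and covariance). Because $r_G$ is strictly increasing and $\rho_{G,U}(t) = \tfrac{\theta}{2}\ln\bigl(1 + \tfrac{2\kappa}{\theta} r_G(t)\bigr)$, the map $\rho_{G,U}$ is a strictly increasing $C^1$ bijection of $[0,+\infty)$ onto its image, with inverse $\rho_{G,U}^{-1}$. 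Also, $\varphi_{G,U}(t) > 0$ throughout.

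Using these facts, the event $\{G(t) > S_G(t)\}$ is equivalent to
\begin{equation*}
\left\{ U(\rho_{G,U}(t)) \, > \, \frac{S_G(t) - m_G(t)}{\varphi_{G,U}(t)} \right\}.
\end{equation*}
Performing the time change $s = \rho_{G,U}(t)$ and recalling the definition of $S_U$ in the statement, the crossing condition becomes $\{U(s) > S_U(s)\}$. Since $\rho_{G,U}$ is continuous and strictly increasing, it preserves infima of hitting epochs, so the FPTs satisfy $T_U = \rho_{G,U}(T_G)$ almost surely, equivalently $T_G = \rho_{G,U}^{-1}(T_U)$.

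Consequently, for every $t \geq 0$,
\begin{equation*}
\bP(T_G \le t) \, = \, \bP(T_U \le \rho_{G,U}(t)).
\end{equation*}
Differentiating in $t$ and applying the chain rule, using $\dot{\rho}_{G,U}(t) > 0$, produces exactly
\begin{equation*}
f_G(t) \, = \, \dot{\rho}_{G,U}(t)\,f_U(\rho_{G,U}(t)),
\end{equation*}
which is \eqref{eq:fptOU}.

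The main (and essentially only non-mechanical) step is the preservation of the first passage time under the time change, which relies crucially on the strict monotonicity and continuity of $\rho_{G,U}$; once this is in place, the rest is a standard change of variables in the distribution function, as in the proof of Proposition \ref{prop:fptWiener}.
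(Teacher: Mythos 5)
Your argument is correct, and it matches the method the paper uses. The paper itself does not prove Proposition~\ref{thm:OUFPT} -- it is cited from \cite{buonocore2011first} -- but the paper's proof of the more general Proposition~\ref{prop:fpttrans} proceeds in exactly the way you describe: use the transformation theorem to rewrite the crossing event, exploit continuity and strict monotonicity of the time change to rewrite the first-passage condition, conclude $\bP(T_G \leq t) = \bP(T_U \leq \rho_{G,U}(t))$, and differentiate. One small slip worth flagging: since Theorem~\ref{thm:OUtrans} is an identity in law (Doob's theorem gives distributional equality via matching means and covariances), you should not write ``$T_U = \rho_{G,U}(T_G)$ almost surely'' -- the random variables live on a priori different spaces and the equality is only in distribution. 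But your conclusion $\bP(T_G \leq t)=\bP(T_U \leq \rho_{G,U}(t))$ is exactly what follows from equality in law as continuous processes, so the final step and Eq.~\eqref{eq:fptOU} are unaffected.
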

Let us show a more general result.
\begin{prop}\label{prop:fpttrans}
Let $\{G_i(t), t\ge 0\}$ for $i=1,2$ be GM processes with mean $m_{G_i}(t)$, covariance $c_{G_i}(\tau,t)=u_{G_i}(\tau)v_{G_i}(t)$ for $\tau\le t$ and ratio $r_{G_i}(t)$. Let also $S_{G_1}(t)$ be any $C^2([0,+\infty[)$ function and
\begin{equation*}
T_{G_1}=\inf\{t \ge 0: \ G_1(t)>S_{G_1}(t)\}
\end{equation*}
with density $f_{G_1}(t)$. Pose
\begin{equation*}
S_{G_2}(t)=\frac{S_{G_1}(\rho_{G_1,G_2}^{-1}(t))-m_{G_1}(\rho_{G_1,G_2}^{-1}(t))}{\varphi_{G_1,G_2}(\rho_{G_1,G_2}^{-1}(t))}+m_{G_2}(t)
\end{equation*}
where $\rho_{G_1,G_2}$ and $\varphi_{G_1,G_2}$ are defined in Theorem \ref{thm:GMtrans} and
\begin{equation*}
T_{G_2}=\inf\{t \ge 0: \ G_2(t)>S_{G_2}(t)\}
\end{equation*}
with density $f_{G_2}(t)$. Then
\begin{equation}\label{eq:fptGM}
f_{G_1}(t)=\dot{\rho}_{G_1,G_2}(t)f_{G_2}(\rho_{G_1,G_2}(t)).
\end{equation}
\end{prop}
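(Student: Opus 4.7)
The plan is to follow the same scheme as in Propositions~\ref{prop:fptWiener} and \ref{thm:OUFPT}, reducing the first passage of $G_1$ through the barrier $S_{G_1}$ to that of $G_2$ through $S_{G_2}$ by invoking the representation in Theorem~\ref{thm:GMtrans}. Note first that $\rho_{G_1,G_2}=r_{G_2}^{-1}\circ r_{G_1}$ is a $C^1$--diffeomorphism of $[0,\infty)$ onto itself, since by hypothesis both ratios are continuous, strictly increasing, and $C^1$ with nowhere vanishing derivative; and $\varphi_{G_1,G_2}(t)=v_{G_1}(t)/v_{G_2}(\rho_{G_1,G_2}(t))$ is strictly positive since the variances $u_{G_i}v_{G_i}$ are positive.

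The first step is to rewrite the crossing event at a fixed $t$. Substituting the identity in Theorem~\ref{thm:GMtrans} and dividing through by $\varphi_{G_1,G_2}(t)>0$, one obtains, after changing variable $s=\rho_{G_1,G_2}(t)$ on the right-hand side,
\begin{equation*}
G_1(t)>S_{G_1}(t)\ \Longleftrightarrow\ G_2(\rho_{G_1,G_2}(t))>S_{G_2}(\rho_{G_1,G_2}(t)),
\end{equation*}
where $S_{G_2}$ is exactly the barrier defined in the statement. Since $\rho_{G_1,G_2}$ is strictly increasing, the corresponding first--passage times satisfy $T_{G_1}=\rho_{G_1,G_2}^{-1}(T_{G_2})$, so $\bP(T_{G_1}\leq t)=\bP(T_{G_2}\leq \rho_{G_1,G_2}(t))$, and a derivative in $t$ yields \eqref{eq:fptGM}.

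The main obstacle is that Theorem~\ref{thm:GMtrans} is stated as an equality only in one--dimensional distributions, whereas the crossing--time argument above really needs equality of the full process laws. To upgrade I would observe that both $G_1$ and the process
\begin{equation*}
\tilde{G}_1(t):=m_{G_1}(t)-\varphi_{G_1,G_2}(t)m_{G_2}(\rho_{G_1,G_2}(t))+\varphi_{G_1,G_2}(t)G_2(\rho_{G_1,G_2}(t))
\end{equation*}
are Gaussian with the same mean, so it is enough to verify that their covariance functions coincide. A direct computation for $s\leq t$, using the defining identity $r_{G_2}(\rho_{G_1,G_2}(s))=r_{G_1}(s)$, reduces $\mathrm{Cov}(\tilde{G}_1(s),\tilde{G}_1(t))$ to $v_{G_1}(s)v_{G_1}(t)\,r_{G_1}(s)=u_{G_1}(s)v_{G_1}(t)=c_{G_1}(s,t)$, which settles the matter. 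Once this distributional upgrade is in place, the crossing argument and the change--of--variables formula $f_{G_1}(t)=\dot\rho_{G_1,G_2}(t)f_{G_2}(\rho_{G_1,G_2}(t))$ follow by a straightforward differentiation.
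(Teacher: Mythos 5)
Your proof follows essentially the same route as the paper: use Theorem~\ref{thm:GMtrans} to convert the crossing event for $G_1$ at barrier $S_{G_1}$ into the crossing event for $G_2$ at barrier $S_{G_2}$, then use the strict monotonicity of $\rho_{G_1,G_2}$ to identify $F_{T_{G_1}}(t)=F_{T_{G_2}}(\rho_{G_1,G_2}(t))$ and differentiate. What you have done beyond the paper is to notice — correctly — that Theorem~\ref{thm:GMtrans} is stated only as an equality of one--dimensional distributions, whereas the crossing--time argument uses the equality inside the probability of a path event $\{\exists\,\tau\le t:\dots\}$, which requires equality of the \emph{process} laws. The paper applies \eqref{eq:pass1fpt} directly at this point without comment, so this is a genuine gap in the written argument. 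Your repair — compute the covariance of $\tilde G_1$ and check it equals $c_{G_1}$, then appeal to the fact that a Gaussian process is determined in law by its mean and covariance — is correct (the key step $r_{G_2}(\rho_{G_1,G_2}(s))=r_{G_1}(s)$ gives exactly $u_{G_1}(s)v_{G_1}(t)$). An alternative way to close the same gap, closer to the spirit of the paper's appendix, is to note that the proof of Theorem~\ref{thm:GMtrans} represents both $G_1$ and $G_2$ via a \emph{single} Wiener process $W$ through Doob's transformation, so the identity \eqref{eq:GMtrans} in fact holds pathwise for this coupled construction and hence in law of processes; either way the upgrade you correctly insist on is available, and your proposal is the cleaner self-contained version.
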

\begin{proof}
By Theorem \ref{thm:GMtrans} we know that
\begin{equation}\label{eq:pass1fpt}
G_1(t)=m_{G_1}(t)-\varphi_{G_1,G_2}(t)m_{G_2}(\rho_{G_1,G_2}(t))+\varphi_{G_1,G_2}(t)G_2(\rho_{G_1,G_2}(t))
\end{equation}
in \mg{$1$-dimensional distributions}. Consider the distribution functions $F_{T_i}(t)$ of $T_i$ for $i=1,2$. Thus we have that:
\begin{equation*}
F_1(t)=\bP(T_1 \le t)=\bP\left(\left\{\exists\tau\le t: \ G_1(\tau)>S_{G_1}(\tau)\right\}\right)
\end{equation*}
and then, by using Eq. \eqref{eq:pass1fpt}
\begin{equation*}
F_1(t)=\bP\left(\left\{\exists\tau\le t: \ G_2(\rho_{G_1,G_2}(\tau))>\frac{S_{G_1}(\tau)-m_{G_1}(\tau)}{\varphi_{G_1,G_2}(\tau)}+m_{G_2}(\rho_{G_1,G_2}(\tau))\right\}\right)
\end{equation*}
that is, by definition of $S_{G_2}(t)$
\begin{equation*}
F_1(t)=\bP\left(\left\{\exists\tau \le t: \ G_2(\rho_{G_1,G_2}(\tau))>S_{G_2}(\rho_{G_1,G_2}(\tau))\right\}\right).
\end{equation*}
Let us remark that as $r_{G_i}$ is continuous and increasing for $i=1,2$, also $r_{G_2}^{-1}$ is continuous and increasing and then $\rho_{G_1,G_2}$ is a continuous increasing function. By the intermediate value theorem we can write
\begin{equation}\label{eq:pass2fpt}
F_1(t)=\bP\left(\left\{\exists\theta \le \rho_{G_1,G_2}(t): \ G_2(\theta)>S_{G_2}(\theta)\right\}\right)=F_2(\rho_{G_1,G_2}(t)).
\end{equation}
Finally by deriving Eq. \eqref{eq:pass2fpt} we obtain \eqref{eq:fptGM}.
\end{proof}
\subsection{\bt{Deducing finite mean conditions}}
Our final aim in this section is to deduce some finite mean conditions for GM processes by using other GM processes for which such conditions are known. Let us give a criterion in such direction.
\begin{prop}\label{prop:finmeantrans}
Consider $\{G_i(t),t \ge 0\}$ for $i=1,2$ as in Proposition \ref{prop:fpttrans}. Suppose that there exists a constant $k\ge 0$ such that:
\begin{equation}\label{eq:suplincond}
\inf_{[k,+\infty[}\dot{\rho}_{G_1,G_2}(t)=\alpha>0.
\end{equation}
Then, if $\E[T_2]<+\infty$, we have $\E[T_1]<+\infty$.
\end{prop}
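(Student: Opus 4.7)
The plan is to start from the expression $\E[T_1]=\int_0^\infty t\, f_{G_1}(t)\,dt$ and to exploit the density identity \eqref{eq:fptGM} from Proposition \ref{prop:fpttrans}, namely $f_{G_1}(t)=\dot{\rho}_{G_1,G_2}(t)\, f_{G_2}(\rho_{G_1,G_2}(t))$. First I would split the integral at the threshold $k$ from hypothesis \eqref{eq:suplincond}:
\begin{equation*}
\E[T_1]=\int_0^k t\, f_{G_1}(t)\,dt+\int_k^{\infty}t\,\dot{\rho}_{G_1,G_2}(t)\,f_{G_2}(\rho_{G_1,G_2}(t))\,dt.
\end{equation*}
The first integral is bounded by $k$, so everything reduces to controlling the tail.

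On the tail I would perform the change of variable $\theta=\rho_{G_1,G_2}(t)$, which is licit since $r_{G_i}\in C^1$ and $\dot r_{G_i}\neq 0$ force $\rho_{G_1,G_2}=r_{G_2}^{-1}\circ r_{G_1}$ to be $C^1$ and strictly increasing. Writing $k^\star:=\rho_{G_1,G_2}(k)$, this yields
\begin{equation*}
\int_k^{\infty}t\,\dot{\rho}_{G_1,G_2}(t)\,f_{G_2}(\rho_{G_1,G_2}(t))\,dt=\int_{k^\star}^{\infty}\rho_{G_1,G_2}^{-1}(\theta)\,f_{G_2}(\theta)\,d\theta.
\end{equation*}

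The key estimate is a linear bound on $\rho_{G_1,G_2}^{-1}$. By hypothesis \eqref{eq:suplincond}, for every $t\ge k$,
\begin{equation*}
\rho_{G_1,G_2}(t)-k^\star=\int_k^t\dot\rho_{G_1,G_2}(s)\,ds\ge \alpha(t-k),
\end{equation*}
so $\rho_{G_1,G_2}^{-1}(\theta)\le k+(\theta-k^\star)/\alpha\le k+\theta/\alpha$ for all $\theta\ge k^\star$. Plugging this into the displayed integral gives
\begin{equation*}
\int_{k^\star}^{\infty}\rho_{G_1,G_2}^{-1}(\theta)\,f_{G_2}(\theta)\,d\theta\le k+\frac{1}{\alpha}\int_0^{\infty}\theta\, f_{G_2}(\theta)\,d\theta= k+\frac{\E[T_2]}{\alpha}<+\infty,
\end{equation*}
which combined with the bound on $[0,k]$ gives $\E[T_1]\le 2k+\E[T_2]/\alpha<+\infty$.

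I do not foresee any serious obstacle: the only points worth checking carefully are that $\rho_{G_1,G_2}$ is genuinely a $C^1$ diffeomorphism on $[k,\infty)$ (needed to legitimise the substitution), and that the initial split at $k$ contributes a finite mass — both are immediate from the standing regularity and monotonicity assumptions on $r_{G_i}$.
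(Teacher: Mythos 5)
Your argument is correct and follows essentially the same route as the paper: split the integral for $\E[T_1]$ at $k$, use the density identity $f_{G_1}(t)=\dot{\rho}_{G_1,G_2}(t)f_{G_2}(\rho_{G_1,G_2}(t))$ with the substitution $\theta=\rho_{G_1,G_2}(t)$, and derive a linear upper bound on $\rho_{G_1,G_2}^{-1}$ from the hypothesis $\dot{\rho}_{G_1,G_2}\ge\alpha$ on $[k,\infty)$. The only cosmetic difference is that you simplify the bound to $k+\theta/\alpha$ (tacitly using $\rho_{G_1,G_2}(k)\ge 0$, which holds since $\rho_{G_1,G_2}(0)=0$ and $\rho_{G_1,G_2}$ is increasing), while the paper keeps the affine form $(\theta-c)/\alpha$; both yield the same finiteness conclusion.
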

\begin{proof}
Let us first study some implications of the condition in Eq. \eqref{eq:suplincond}. For $t \ge k$ we have $\dot{\rho}_{G_1,G_2}(t)\ge \alpha$ so we have
\begin{equation*}
\rho_{G_1,G_2}(t)-\rho_{G_1,G_2}(k)\ge \alpha(t-k).
\end{equation*}
Posing $c=\rho_{G_1,G_2}(k)-\alpha k$ we have
\begin{equation*}
\rho_{G_1,G_2}(t)\ge \alpha t+c.
\end{equation*}
Since $\rho_{G_1,G_2}$ is an increasing function, also $\rho_{G_1,G_2}^{-1}$ is an increasing function and then
\begin{equation*}
t\ge \rho_{G_1,G_2}^{-1}(\alpha t+c).
\end{equation*}
Let us pose $s=\alpha t+c$ to obtain $t=\frac{s-c}{\alpha}$ and then
\begin{equation}\label{eq:pass1finmeantrans}
\rho_{G_1,G_2}^{-1}(s)\le \frac{s-c}{\alpha}.
\end{equation}
Finally observe that $t>k$ if and only if $s>\rho_{G_1,G_2}(k)$, so we have that Eq. \eqref{eq:pass1finmeantrans} is true for any $s>\rho_{G_1,G_2}(k)$.\\
Consider
\begin{equation}\label{eq:pass2finmeantrans}
\E[T_1]=\int_0^{+\infty}tf_{G_1}(t)dt=\int_0^{k}tf_{G_1}(t)dt+\int_{k}^{+\infty}tf_{G_1}(t)dt.
\end{equation}
It is easy to see that
\begin{equation*}
\int_0^{k}tf_{G_1}(t)dt\le k\int_0^{k}f_{G_1}(t)dt\le k<+\infty
\end{equation*}
so we only have to bound the second integral on the right-hand-side of Eq. \eqref{eq:pass2finmeantrans}. To do that, let us use Eq. \eqref{eq:fptGM} to obtain
\begin{equation*}
\int_{k}^{+\infty}tf_{G_1}(t)dt=\int_{k}^{+\infty}t\dot{\rho}_{G_1,G_2}(t)f_{G_2}(\rho_{G_1,G_2}(t))dt.
\end{equation*}
Since $\rho_{G_1,G_2}$ is a $C^1$-diffeomorphism, we can use a change of variable formula posing $s=\rho_{G_1,G_2}(t)$ to obtain
\begin{equation*}
\int_{k}^{+\infty}tf_{G_1}(t)dt=\int_{\rho_{G_1,G_2}(k)}^{+\infty}\rho_{G_1,G_2}^{-1}(s)f_{G_2}(s)ds
\end{equation*}
and, by Eq. \eqref{eq:pass1finmeantrans} we have
\begin{equation}\label{eq:pass3finmeantrans}
\int_{k}^{+\infty}tf_{G_1}(t)dt\le \frac{1}{\alpha}\left[\int_{\rho_{G_1,G_2}(k)}^{+\infty}sf_{G_2}(s)ds-c\int_{\rho_{G_1,G_2}(k)}^{+\infty}f_{G_2}(s)\right].
\end{equation}
But we also have that
\begin{equation*}
\int_{\rho_{G_1,G_2}(k)}^{+\infty}f_{G_2}(s)ds\le 1
\end{equation*}
and
\begin{equation*}
\int_{\rho_{G_1,G_2}(k)}^{+\infty}sf_{G_2}(s)ds\le \int_{0}^{+\infty}sf_{G_2}(s)ds=\E[T_2]<+\infty
\end{equation*}
so, by Eq. \eqref{eq:pass3finmeantrans}, we finally obtain
\begin{equation*}
\int_{k}^{+\infty}tf_{G_1}(t)dt<+\infty.
\end{equation*}
\end{proof}
Thanks to this result, one has only to choose a suitable $G_2$ for which finiteness of the mean of the first passage time is already known. Let us recall a result given in \cite{giorno1990asymptotic} using the form of \cite[Claim 8]{buonocore2011first}.
\begin{prop}\label{prop:OUasymp}
Let $U(t)$ be an Ornstein-Uhlenbeck process defined as solution of
\begin{equation*}
dU_t=-\frac{1}{\theta}U_tdt+\sigma dW_t
\end{equation*}
where $W_t$ is a Wiener process and $\theta,\sigma>0$ are constants. Let $S_U(t)$ be a $C^2$ function such that $\lim_{t \to +\infty}S_U(t)=S_U>2\sigma\sqrt{\theta}$. Define
\begin{equation*}
h_U=\frac{S_U}{\sigma\sqrt{\pi \theta}}e^{-\frac{S_U^2}{\sigma^2\theta}}
\end{equation*}
and
\begin{equation*}
T=\inf\{t\ge 0: \ U(t)>S_U(t)\}
\end{equation*}
with density $f_T$. Then, as $t \to +\infty$
\begin{equation*}
f_T(t)\sim \frac{h_U}{\theta}e^{-\frac{h_U}{\theta}t}.
\end{equation*}
\end{prop}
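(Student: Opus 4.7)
The proposition is a direct translation (via the change of time-scale recalled in \cite{buonocore2011first}) of the classical result in \cite{giorno1990asymptotic}; thus my plan is to sketch how that proof goes and then to indicate the minor adaptation needed for an asymptotically constant $C^2$ boundary.

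The strategy is spectral-theoretic. First, consider the auxiliary case of a constant threshold $S_U(t)\equiv S_U$. The semigroup of $U$ killed upon reaching $S_U$ is generated by $\mathcal{A}u=\frac{\sigma^2}{2} u'' - \frac{x}{\theta} u'$ on $(-\infty, S_U)$ with absorbing condition at $S_U$; expanding in its eigenbasis gives
\begin{equation*}
f_T(t) = \sum_{n\geq 0} c_n e^{-\lambda_n t},
\end{equation*}
where $0<\lambda_0<\lambda_1<\cdots$ are the eigenvalues. Since $\mathcal{A}$ is conjugate to the quantum harmonic oscillator, the eigenvalue equation reduces to finding the positive zeros of a parabolic cylinder function $D_{-\lambda\theta}\!\left(-S_U\sqrt{2/(\sigma^2\theta)}\right)$, and the residues $c_n$ are explicit.

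Second, extract the smallest eigenvalue $\lambda_0$ by Laplace/Kramers-type asymptotics. The hypothesis $S_U>2\sigma\sqrt{\theta}$ places the threshold well beyond the typical scale $\sigma\sqrt{\theta/2}$ of the stationary OU law, so escape is a rare event; in this regime $\lambda_0$ is exponentially small while $\lambda_n$ for $n\geq 1$ remains of order $1/\theta$. A saddle-point analysis of the smallest zero of the parabolic cylinder function yields $\lambda_0\sim h_U/\theta$, and a matching calculation of the residue gives $c_0\sim h_U/\theta$; this computation is the technical heart of \cite{giorno1990asymptotic}. Consequently the $n=0$ term dominates and $f_T(t)\sim(h_U/\theta)\,e^{-h_U t/\theta}$.

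Third, to transfer the statement to an asymptotically constant threshold $S_U(t)\to S_U$, I would argue by sandwich/comparison: for any $\varepsilon>0$ there is $t_\varepsilon$ with $|S_U(t)-S_U|<\varepsilon$ for $t\geq t_\varepsilon$, so the FPT through $S_U(\cdot)$ is squeezed (after $t_\varepsilon$) between FPTs through the constant thresholds $S_U\mp\varepsilon$. Since $h_U$ depends continuously on the threshold, letting $\varepsilon\to 0$ preserves the leading exponential rate and prefactor; the $C^2$ regularity of $S_U(t)$ guarantees that boundary-smoothing corrections do not enter at this order. The main obstacle is Step 2, namely the explicit extraction of the Kramers prefactor from the smallest zero of the parabolic cylinder function, which I would not reproduce but instead invoke from \cite{giorno1990asymptotic} and its reformulation as Claim 8 in \cite{buonocore2011first}.
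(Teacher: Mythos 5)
The paper gives no proof of this proposition: it is stated as a quoted result, recalling Giorno--Nobile--Ricciardi (1990) in the form of Claim 8 of Buonocore et al.\ (2011), and the paper moves on. Since your proposal ultimately defers to exactly those same two references for the technical core (the extraction of the rate and prefactor), you and the paper are at the same level of rigor, so in that sense the approach agrees.

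That said, two remarks on the sketch itself. First, the cited reference does not proceed by the spectral route you describe. Giorno--Nobile--Ricciardi analyse the first-passage-time density via a Volterra (renewal-type) integral equation and obtain the exponential tail by asymptotics of the kernel; in particular their result is formulated from the start for a time-varying boundary $S_U(\cdot)$ converging to a constant, so no separate reduction to the constant-threshold case is needed. The spectral/parabolic-cylinder picture is a valid alternative heuristic but it is not the argument you are invoking by citing that paper. Second, your Step 3 sandwich has a real gap if you tried to make it self-contained: bounding $S_U-\varepsilon\le S_U(t)\le S_U+\varepsilon$ for $t\ge t_\varepsilon$ gives an ordering of first-passage \emph{times} and hence of the survival functions $\bP(T>t)$, not of the densities $f_T(t)$, so you cannot directly squeeze the density asymptotics without an additional step (e.g.\ a monotone-density argument or a Laplace-transform comparison). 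Moreover the squeeze only holds after $t_\varepsilon$, at which point the starting position for the comparison processes is random, which requires care. None of this matters if you treat the proposition as a black box from the literature, as the paper does, but it would if you wanted the sketch to stand on its own.
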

By using such proposition one can show the following
\begin{cor}\label{cor:OUfinmean}
Let $U(t)$ be an Ornstein-Uhlenbeck process defined as solution of
\begin{equation*}
dU_t=-\frac{1}{\theta}U_tdt+\sigma dW_t, \ \mg{U_0=0}
\end{equation*}
where $W_t$ is a standard Wiener process and $\theta,\sigma>0$ are constants. Let $S_U(t)$ be an upper bounded function and
\begin{equation*}
T=\inf\{t\ge 0: \ U(t)>S_U(t)\}
\end{equation*}
with density $f_T$. Then $\E[T]<+\infty$.
\end{cor}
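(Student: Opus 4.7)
The plan is to dominate $T$ by the first passage time of $U$ through a suitable constant threshold and then apply Proposition \ref{prop:OUasymp} to obtain exponential tail decay. Since $S_U$ is upper bounded, set $M:=\sup_{t\ge 0}S_U(t)<+\infty$ and pick any constant $S^*>\max\bigl(M,\,2\sigma\sqrt{\theta}\bigr)$. Consider the auxiliary first passage time
\[
T^*:=\inf\{t\ge 0:\ U(t)>S^*\}.
\]
Because $U$ has continuous paths and $U_0=0<S^*$, the event $\{U(T^*)=S^*\}$ holds on $\{T^*<\infty\}$; since $S^*>M\ge S_U(T^*)$, this gives $U(T^*)>S_U(T^*)$, so $T\le T^*$ almost surely.

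Next, I would apply Proposition \ref{prop:OUasymp} with the $C^2$ (in fact constant) threshold $S_U(t)\equiv S^*$. The hypothesis $\lim_{t\to+\infty}S^*=S^*>2\sigma\sqrt{\theta}$ is satisfied by construction, so setting
\[
h=\frac{S^*}{\sigma\sqrt{\pi\theta}}\,e^{-\frac{(S^*)^2}{\sigma^2\theta}}>0
\]
the density $f_{T^*}$ satisfies $f_{T^*}(t)\sim (h/\theta)\,e^{-(h/\theta)t}$ as $t\to+\infty$. In particular there exist constants $C>0$ and $t_0>0$ such that $f_{T^*}(t)\le C e^{-(h/2\theta)t}$ for all $t\ge t_0$, so that $\int_0^\infty t\,f_{T^*}(t)\,dt<+\infty$, i.e.\ $\E[T^*]<+\infty$.

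Finally, the pathwise domination $T\le T^*$ yields $\E[T]\le\E[T^*]<+\infty$, which is the claim.

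The only real subtlety is the comparison step: one must be sure that the hypotheses of Proposition \ref{prop:OUasymp} can be met by enlarging the threshold (as opposed to shrinking it, which would break the $S^*>2\sigma\sqrt{\theta}$ requirement when $M$ is small); this is automatic here because enlarging the threshold only delays the first passage, so the inequality $T\le T^*$ still holds. No regularity assumption on $S_U$ beyond being bounded is needed, since we do not apply the asymptotic estimate to $S_U$ itself but to its constant majorant.
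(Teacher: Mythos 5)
Your proof is correct and follows essentially the same route as the paper: dominate $T$ by the first passage time through a constant upper bound of $S_U$ and then invoke Proposition \ref{prop:OUasymp} to obtain exponentially decaying tail density, hence finite mean. You are in fact slightly more careful than the paper in noting that the constant threshold must be chosen strictly above $2\sigma\sqrt{\theta}$ (enlarging $\sup S_U$ if necessary so the hypotheses of Proposition \ref{prop:OUasymp} apply), a point the paper glosses over.
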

\begin{proof}
By hypothesis there is a constant $M>2\sigma\sqrt{\theta}$ such that \bt{ for any }$ t \ge 0$ \bt{ it is true that} $S_U(t)\le M$. Define
\begin{equation*}
\tilde{T}=\inf\{t\ge 0: \ U(t)>M\}
\end{equation*}
with density $f_{\tilde{T}}$. Let us show that $T \le \tilde{T}$ almost surely. Fix $\omega \in \Omega$ and observe that if $U(t,\omega)>M$ then $U(t,\omega)>S_U(t)$. Then we have that
\begin{equation*}
\{t \ge 0: \ U(t,\omega)>M\}\subseteq \{t \ge 0: \ U(t,\omega)>S_U(t)\}
\end{equation*}
and then, taking the infimum \mg{on the sets for any fixed} $\omega \in \Omega$ such that \mg{such} sets are non-empty, we obtain
\begin{equation*}
T(\omega)\le\tilde{T}(\omega).
\end{equation*}
Since this inequality is valid for almost all $\omega \in \Omega$, we also have
\begin{equation*}
\E[T]\le \E[\tilde{T}].
\end{equation*}
Now we only need to show that $\E[\tilde{T}]<+\infty$. Since $f_{\tilde{T}}$ is a density function, it is in $L^1([0,+\infty[)$, while the function $id(t)=t$ is in $L^{\infty}([0,k])$ for all $k>0$. Thus we have only to show that $t\mapsto tf_{\tilde{T}}(t)$ is integrable in a neighbourhood of $+\infty$. But it is trivial since, by using Proposition \ref{prop:OUasymp}, we have that for $t \to +\infty$, $tf_{\tilde{T}}(t)\sim \frac{h_U}{\theta}te^{-\frac{h_U}{\theta}t}$ which is integrable.
\end{proof}
Combining such result with Proposition \ref{prop:finmeantrans} we easily obtain the following
\begin{cor}\label{cor:FMCOU}
Let $G(t)$ be a GM process and $U(t)$ an Ornstein-Uhlenbeck process as in Theorem \ref{thm:OUtrans}. Let also $S_G(t)$ be a $C^2([0,+\infty[)$ function and
\begin{equation*}
T_G=\inf\{t \ge 0: \ G(t)>S_G(t)\}.
\end{equation*}
Pose
\begin{equation*}
S_U(t)=\frac{S_G(\rho_{G,U}^{-1}(t))-m_G(\rho_{G,U}^{-1}(t))}{\varphi_{G,U}(\rho_{G,U}^{-1}(t))}.
\end{equation*}
Then, under the hypotheses:
\begin{enumerate}
\item It exists a constant $k>0$ such that $\inf_{[k,+\infty[}\dot{\rho}_{G,U}(t)=\alpha> 0$;
\item $S_U(t)$ is upper bounded,
\end{enumerate}
we have $\E[T_G]<+\infty$.
\end{cor}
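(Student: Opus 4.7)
The plan is to chain together the two immediately preceding results, Corollary \ref{cor:OUfinmean} and Proposition \ref{prop:finmeantrans}, with the Ornstein--Uhlenbeck process $U$ playing the role of the auxiliary Gauss--Markov process $G_2$. First I would observe that $U(t)$ is itself a Gauss--Markov process, and that the functions $\rho_{G,U}$ and $\varphi_{G,U}$ defined in Theorem \ref{thm:OUtrans} coincide (up to the dimensional constant $\kappa$, which we may fix to $1$) with the general transformation maps $\rho_{G_1,G_2}$, $\varphi_{G_1,G_2}$ of Theorem \ref{thm:GMtrans} when $G_1=G$ and $G_2=U$: this is exactly what was verified in the remark following Theorem \ref{thm:GMtrans}. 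Consequently, with the definition of $S_U(t)$ given in the statement, Proposition \ref{prop:fpttrans} applies and yields $f_{G}(t)=\dot\rho_{G,U}(t)\,f_U(\rho_{G,U}(t))$, so that the first passage time $T_U:=\inf\{t\ge 0:\ U(t)>S_U(t)\}$ is the natural partner of $T_G$.

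Next I would invoke hypothesis (2), namely that $S_U$ is upper bounded: this is precisely the assumption needed in Corollary \ref{cor:OUfinmean}, and it yields $\E[T_U]<+\infty$.

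Finally, I would apply Proposition \ref{prop:finmeantrans} with $G_1=G$ and $G_2=U$. Hypothesis (1) of the present corollary, $\inf_{[k,+\infty)} \dot\rho_{G,U}(t)=\alpha>0$, is exactly hypothesis \eqref{eq:suplincond} of Proposition \ref{prop:finmeantrans}. Together with $\E[T_U]<+\infty$, this gives $\E[T_G]<+\infty$, completing the proof.

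Essentially no nontrivial obstacle arises here: the result is just the concatenation of the OU-specific finite-mean criterion of Corollary \ref{cor:OUfinmean} with the general transport-of-finite-mean principle of Proposition \ref{prop:finmeantrans}. The only point requiring care is to make sure the matching between the OU-version transformation maps $(\rho_{G,U},\varphi_{G,U})$ of Theorem \ref{thm:OUtrans} and the generic Gauss--Markov transformation maps used in Proposition \ref{prop:finmeantrans} is correctly identified, which is already handled by the remark after Theorem \ref{thm:GMtrans}.
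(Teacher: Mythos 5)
Your proof is correct and takes essentially the same route as the paper: define $T_U$ using the given $S_U$, invoke hypothesis (2) together with Corollary \ref{cor:OUfinmean} to obtain $\E[T_U]<+\infty$, and then transfer this to $\E[T_G]<+\infty$ via Proposition \ref{prop:finmeantrans} under hypothesis (1). The extra care you take in identifying $(\rho_{G,U},\varphi_{G,U})$ with the generic Gauss--Markov transformation maps is a reasonable elaboration, not a deviation.
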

\begin{proof}
Define
\begin{equation*}
T_U=\inf\{t \ge 0: \ U(t)>S_U(t)\}
\end{equation*}
and observe that, by hypothesis $2$ and Corollary \ref{cor:OUfinmean}, $\E[T_U]<+\infty$. Finally, by hypothesis $1$, we can use Proposition \ref{prop:finmeantrans} to assure that $\E[T_G]<+\infty$.
\end{proof}

A suitable GM process to use for our purposes is the Wiener process with non-zero drift. Indeed we have
\begin{prop}
Let $W_d(t)=W(t)+dt$ be a Wiener process with positive drift $d>0$, $S_d(t)$ an upper-bounded continuous function with $S_d(0)>0$ and pose
\begin{equation*}
T_d=\inf\{t\ge 0: W_d(t)>S_d(t)\}.
\end{equation*}
Then $\E[T_d]<+\infty$.
\end{prop}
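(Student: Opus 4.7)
The plan is to mimic exactly the argument used in Corollary \ref{cor:OUfinmean}, since the Wiener process with positive drift plays for this proposition the same role that the Ornstein--Uhlenbeck process played there: we have an explicit handle on the first passage time to a \emph{constant} threshold, so we just need to dominate.

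First I would set $M := \sup_{t \ge 0} S_d(t)$, which is finite by upper-boundedness of $S_d$, and observe that $M \ge S_d(0) > 0$ so that $M$ is a legitimate positive threshold. Then I would introduce the auxiliary first passage time
\begin{equation*}
\widetilde{T}_d := \inf\{t \ge 0 : W_d(t) > M\}
\end{equation*}
and argue pathwise, for $\omega \in \Omega$, that $\{t \ge 0 : W_d(t,\omega) > M\} \subseteq \{t \ge 0 : W_d(t,\omega) > S_d(t)\}$, because $W_d(t,\omega) > M \ge S_d(t)$ implies $W_d(t,\omega) > S_d(t)$. Taking the infimum on both sides (on the event that the sets are non-empty) yields $T_d \le \widetilde{T}_d$ almost surely, hence $\E[T_d] \le \E[\widetilde{T}_d]$.

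It only remains to show that $\E[\widetilde{T}_d] < +\infty$. This is precisely the setting of Example \ref{ex:BMd} with $c = M$ and $\delta = d > 0$: the first passage time of $W(t) + dt$ through the constant level $M$ admits the inverse Gaussian density
\begin{equation*}
p_{\widetilde{T}_d}(t) = \frac{M}{\sqrt{2\pi}}\frac{e^{-(M - dt)^2/(2t)}}{t^{3/2}} \mathbf{1}_{(0,+\infty)}(t),
\end{equation*}
and the explicit computation recorded in that example gives $\E[\widetilde{T}_d] = M/d < +\infty$. Combining with the previous inequality yields $\E[T_d] \le M/d < +\infty$, which is the claim.

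There is no serious obstacle: the only point that deserves a line of care is that the comparison $T_d \le \widetilde{T}_d$ really is pathwise and not just in distribution (both processes are built from the same $W_d$), and that $M > 0$ so that the inverse Gaussian formula from Example \ref{ex:BMd} is applicable. Everything else is quoting the explicit mean of an inverse Gaussian distribution.
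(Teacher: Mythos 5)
Your proposal is correct and follows essentially the same route as the paper's own proof: dominate $T_d$ pathwise by the first passage time $\widetilde{T}_d$ of $W_d$ through the constant level $M = \sup_{t\ge0} S_d(t)$, and then use the explicit inverse Gaussian density of $\widetilde{T}_d$ to conclude finiteness of the mean. The only cosmetic difference is that you quote the closed-form mean $M/d$ from Example \ref{ex:BMd}, whereas the paper just observes that $t f_{\widetilde{T}_d}(t)$ is integrable.
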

\begin{proof}
Let $M=\sup_{t\ge 0}S_d(t)>0$ and define
\begin{equation*}
\tilde{T}_d=\inf\{t\ge 0: W_d(t)>M\}
\end{equation*}
with density $f_{\tilde{T}_d}$. Let us first show that $T_d\le \tilde{T}_d$ almost surely. To do this, fix $\omega \in \Omega$ and observe that
\begin{equation*}
W_d(t,\omega)>M \Rightarrow W_d(t,\omega)>S_d(t)
\end{equation*}
so
\begin{equation*}
\{t \ge 0: W_d(t,\omega)>M\}\subseteq \{t \ge 0: W_d(t,\omega)>S_d(t)\}
\end{equation*}
thus, taking the infimum \mg{on the sets} when for $\omega \in \Omega$ \mg{such} sets are non-empty, we have
\begin{equation*}
T_d(\omega) \le \tilde{T}_d(\omega)
\end{equation*}
Since such relation is true for almost all $\omega \in \Omega$ we also have
\begin{equation*}
\E[T_d]\le \E[\tilde{T}_d]
\end{equation*}
and then we only need to show that $\E[\tilde{T}_d]<+\infty$. But it trivial since
\begin{equation*}
f_{\tilde{T}_d}(t)=\frac{M}{\sqrt{2\pi t^3}}e^{-\frac{(M-dt)^2}{2t}}
\end{equation*}
and then $tf_{\tilde{T}_d}(t)$ is integrable.
\end{proof}
\subsection{The asymptotic behaviour at zero}
From Doob's Transformation Theorem one can also obtain some results on the asymptotic behaviour of the distribution function of the first passage time of a Gauss-Markov process through a fixed $C^2$ threshold. The following result represents a first step in such direction:
\begin{prop}\label{prop:GMconfr0}
Let $\{G(t),t\ge 0\}$ a Gauss-Markov process with mean $m_G(t)$, covariance $c_G(t,\tau)=u_G(\tau)v_G(t)$ with $\tau \le t$ and ratio $r_G(t)=\frac{u_G(t)}{v_G(t)}$. Suppose $G(0)=m_G(0)$ almost surely. Let also $S_G(t)$ be any $C^2([0,+\infty[)$ function such that $S_G(0)>m_G(0)$ and:
\begin{equation*}
T_G=\inf \{t \ge 0: \ G(t)>S_G(t)\}
\end{equation*}
with distribution function $F_G(t)=\bP(T_G \le t)$. Thus there are five positive constants $\delta,C_1,C_2,D_1,D_2$ such that for any $t \in [0,\delta]$ we have:
\begin{equation}\label{eq:confr0}
C_1\int_0^{r_G(t)}\frac{e^{-\frac{D_1}{2}}}{s^{\frac{3}{2}}}ds \le F_G(t)\le
C_2\int_0^{r_G(t)}\frac{e^{-\frac{D_2}{2}}}{s^{\frac{3}{2}}}ds.
\end{equation}
\end{prop}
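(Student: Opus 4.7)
The plan is to reduce everything to the well-understood first passage of a standard Wiener process through a constant level, via Doob's Transformation Theorem and its first passage density formula. First I would apply Proposition \ref{prop:fptWiener} to $G$ (taking the dimensional constant $\kappa = 1$ in Theorem \ref{thm:Doobtrans} for convenience): integrating the identity $f_G(t) = \dot{r}_G(t) f_W(r_G(t))$ from $0$ to $t$ and using the change of variable $u = r_G(\tau)$ yields
\begin{equation*}
F_G(t) \, = \, F_W(r_G(t)),
\end{equation*}
where $F_W(s) = \bP(T_W \le s)$ and $T_W$ is the first passage time of a standard Wiener process through the $C^2$ moving boundary
\begin{equation*}
S_W(s) \, = \, \frac{S_G(r_G^{-1}(s)) - m_G(r_G^{-1}(s))}{v_G(r_G^{-1}(s))}.
\end{equation*}
Since $S_G(0) > m_G(0)$ and $S_W$ is continuous with $S_W(0) > 0$, there exist $\eta > 0$ and constants $0 < a \le b$ such that $a \le S_W(s) \le b$ for all $s \in [0, \eta]$.

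Next I would compare $T_W$ with the first passage times of the Wiener process through the constant levels $a$ and $b$. Pathwise, crossing the higher constant $b$ forces the moving barrier to have been already crossed, while crossing $S_W$ forces the lower constant $a$ to have already been reached; hence, for $s \le \eta$,
\begin{equation*}
\bP(T_b \le s) \, \le \, F_W(s) \, \le \, \bP(T_a \le s).
\end{equation*}
For a standard Wiener process the first passage density through a constant level $c > 0$ is the classical inverse Gaussian density $\frac{c}{\sqrt{2\pi}} s^{-3/2} e^{-c^2/(2s)}$, so integrating gives explicit bounds of the form $\frac{c}{\sqrt{2\pi}} \int_0^s u^{-3/2} e^{-c^2/(2u)} du$ with $c \in \{a, b\}$. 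Setting $D_1 = b^2$, $D_2 = a^2$ and $C_i = c_i/\sqrt{2\pi}$ (and with $e^{-D_i/(2u)}$ inside the integrals) yields the desired two-sided estimate in the variable $s$.

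Finally I would pull back through the identity $F_G(t) = F_W(r_G(t))$. Choosing $\delta$ small enough that $r_G(t) \le \eta$ for all $t \in [0, \delta]$ (possible because $r_G$ is continuous, strictly increasing and $r_G(0) = 0$), the Wiener bounds give precisely \eqref{eq:confr0}. The only real obstacle is the monotonicity comparison between $T_W$ and the constant-level passage times $T_a$, $T_b$, which is a straightforward pathwise argument using that $S_W$ is sandwiched between $a$ and $b$ on $[0,\eta]$; everything else amounts to invoking the Doob time-change and the explicit density of the Wiener first passage through a constant level.
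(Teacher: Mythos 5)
Your proposal is correct and follows essentially the same route as the paper's own proof: reduce to a Wiener first passage via Proposition \ref{prop:fptWiener} (yielding $F_G(t)=F_W(r_G(t))$), sandwich the transformed moving boundary between two positive constants near the origin, and compare the corresponding constant-level first passage times, whose inverse Gaussian densities give the explicit integral bounds. The only cosmetic differences are notational (your $a,b,\eta$ play the role of the paper's $S_{\min}, S_{\max}, r_G(\delta)$, and the paper phrases the comparison via truncated stopping times $T\wedge r_G(\delta)$ rather than event inclusions); you also implicitly use, as the paper does explicitly, that $v_G(0)>0$ so that $r_G(0)=0$ and $S_W(0)>0$.
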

This result can be used to show that under some hypothesis on $r_G(t)$ the distribution function $F_G(t)$ does not vary regularly in $0$.\\
To do this, we need the following technical lemma:
\begin{lem}\label{lem:teclem}
Let $C>0$ and $\rho:[0,+\infty[\to[0,+\infty[$ be a strictly increasing and \bt{differentiable} (in $]0,+\infty[$) function such that:
\begin{itemize}
\item[R1] $\rho(0)=0$;
\item[R2] There exists a constant $l_1>0$ such that $$\lim_{t \to 0^+}\frac{\rho(t)}{t}=l_1;$$
\item[R3] There exists a constant $l_2$ such that $$\lim_{t \to 0^+}\frac{\rho(t)-l_1t}{t^2}=l_2.$$
\end{itemize}
Consider the function
\begin{equation}\label{eq:tecfun}
F(t)=\int_0^{\rho(t)}s^{-\frac{3}{2}}e^{-\frac{C}{s}}ds.
\end{equation}
Then, for some positive constants $K_1,K_2$, as $t \to 0^+$ we have
\begin{equation}\label{eq:tecconfr}
F(t)\sim K_1t^{\frac{1}{2}}e^{-\frac{K_2}{t}}
\end{equation}
\end{lem}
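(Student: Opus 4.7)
My plan is to reduce $F(t)$ to an incomplete gamma integral via a change of variables and then extract the asymptotic using the standard Laplace-type estimate for $\Gamma(1/2, a)$ at large $a$, finally translating back using the three hypotheses on $\rho$.

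\textbf{Step 1: Reduction to an incomplete gamma integral.} I would substitute $u = C/s$, so $s = C/u$ and $ds = -C u^{-2} du$. The limits $s = 0^+$ and $s = \rho(t)$ become $u = +\infty$ and $u = C/\rho(t)$ respectively, and $s^{-3/2} = (u/C)^{3/2}$. A short computation yields
\begin{equation*}
F(t) \, = \, C^{-1/2} \int_{C/\rho(t)}^{+\infty} u^{-1/2} e^{-u} \, du.
\end{equation*}
This puts all the $t$-dependence into the lower limit $a(t) := C/\rho(t)$, and by hypothesis R1 we have $a(t) \to +\infty$ as $t \to 0^+$.

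\textbf{Step 2: Asymptotic of the incomplete gamma.} I would establish the classical estimate
\begin{equation*}
\int_a^{+\infty} u^{-1/2} e^{-u} du \, \sim \, a^{-1/2} e^{-a} \quad \mbox{as } a \to +\infty,
\end{equation*}
by a single integration by parts: $\int_a^\infty u^{-1/2} e^{-u} du = a^{-1/2} e^{-a} - \frac{1}{2}\int_a^\infty u^{-3/2} e^{-u} du$, and the second term is $o(a^{-1/2} e^{-a})$ because $u^{-3/2} \leq a^{-1} u^{-1/2}$ on $[a,\infty)$. Inserted into Step 1 this gives
\begin{equation*}
F(t) \, \sim \, C^{-1} \, \rho(t)^{1/2} \, e^{-C/\rho(t)} \quad \mbox{as } t \to 0^+.
\end{equation*}

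\textbf{Step 3: Translating to the variable $t$.} From R2 we get $\rho(t)^{1/2} \sim l_1^{1/2} t^{1/2}$, which gives the polynomial factor. The delicate part, and the main obstacle, is the exponential: I need an additive expansion of $C/\rho(t)$, not just a multiplicative one, because $e^{-x}$ is sensitive to $o(1)$ perturbations. Writing $\rho(t) = l_1 t + l_2 t^2 + o(t^2)$ from R2 and R3, I expand
\begin{equation*}
\frac{C}{\rho(t)} \, = \, \frac{C}{l_1 t} \cdot \frac{1}{1 + (l_2/l_1)t + o(t)} \, = \, \frac{C}{l_1 t} - \frac{C l_2}{l_1^2} + o(1),
\end{equation*}
so that $e^{-C/\rho(t)} \sim e^{C l_2/l_1^2}\, e^{-C/(l_1 t)}$. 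Combining with the polynomial factor yields the claimed form with the explicit constants
\begin{equation*}
K_1 \, = \, C^{-1} l_1^{1/2} e^{C l_2/l_1^2}, \qquad K_2 \, = \, C/l_1,
\end{equation*}
both strictly positive, finishing the argument.
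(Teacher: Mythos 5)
Your proof is correct and takes a genuinely different route from the paper. The paper works at the level of derivatives: it sets $f=F'$, builds the comparison function $g(t)=\widetilde{C}t^{-3/2}e^{-C/(l_1t)}$, shows $f/g\to 1$, then repeats the comparison against $h=H'$ and finishes with L'H\^{o}pital. You instead keep $F$ intact, change variables $u=C/s$ to land on the upper incomplete gamma integral $C^{-1/2}\int_{C/\rho(t)}^\infty u^{-1/2}e^{-u}\,du$, invoke the standard one--integration-by-parts asymptotic $\int_a^\infty u^{-1/2}e^{-u}du\sim a^{-1/2}e^{-a}$, and then translate back via the additive expansion $C/\rho(t)=C/(l_1t)-Cl_2/l_1^2+o(1)$ that you correctly extract from R2--R3. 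The constants you obtain, $K_1=C^{-1}l_1^{1/2}e^{Cl_2/l_1^2}$ and $K_2=C/l_1$, agree with the paper's $\widetilde C/K_2$ and $C/l_1$. A real advantage of your argument is that it never uses $\dot\rho$: the paper's proof asserts $\lim_{t\to0^+}\dot\rho(t)=l_1$ as a consequence of R1--R2, but R2 (or even R3) together with differentiability and monotonicity does not force $\dot\rho$ to have a limit at $0^+$ (take, e.g., $\rho(t)=l_1t+a\int_0^t\sin(1/s^2)\,ds$ with $0<|a|<l_1$, for which $\dot\rho(t)=l_1+a\sin(1/t^2)$ oscillates while R1--R3 all hold with $l_2=0$). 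Your version is therefore not only more elementary but also more robust under exactly the stated hypotheses; you also correctly flag the reason an additive (not merely multiplicative) control on $C/\rho(t)$ is needed inside the exponential, which is the genuinely delicate point.
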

\begin{rmk}
Hypotheses R(1-3) can be achieved if $\rho$ is a strictly increasing $C^2([0,+\infty[)$ function with $\rho(0)=0$ and $l_1=\dot{\rho}(0)>0$. Hypotheses R1 and R2 are obviously achieved by such conditions. Moreover, if we consider the Taylor polynomial
\begin{equation*}
p_2(t)=t\dot{\rho}(0)+\frac{\ddot{\rho}(0)}{2}t^2=tl_1+\frac{\ddot{\rho}(0)}{2}t^2
\end{equation*}
we know that
\begin{equation*}
\lim_{t \to 0^+}\frac{\rho(t)-p_2(t)}{t^2}=0
\end{equation*}
that is to say
\begin{equation*}
\lim_{t \to 0^+}\frac{\rho(t)-l_1t}{t^2}-\frac{\ddot{\rho}(0)}{2}=0.
\end{equation*}
Thus we can pose $l_2=\frac{\ddot{\rho}(0)}{2}$ to obtain hypothesis R3.
\end{rmk}
The technical lemma we showed before allows us to prove the following:
\begin{prop}\label{prop:notregvar}
With the same notation and under the same hypotheses of Proposition \ref{prop:GMconfr0}, if $r_G(t)$ satisfies hypotheses R(1-3) of Lemma \ref{lem:teclem}, then $F_G$ does not vary regularly in $0$.
\end{prop}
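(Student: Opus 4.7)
The plan is to combine the two-sided estimate of Proposition \ref{prop:GMconfr0} with the sharp asymptotic of Lemma \ref{lem:teclem} in order to squeeze $F_G(t)$ between two functions of the form $K \, t^{1/2} e^{-K'/t}$ as $t \to 0^+$. Such a sub-power behaviour is incompatible with regular variation at $0$, since a regularly varying function with index $\rho$ must satisfy $\lim_{t \to 0^+} F_G(\lambda t)/F_G(t) = \lambda^\rho < \infty$ for every $\lambda > 0$; the contradiction will come from exhibiting a $\lambda$ for which this ratio blows up.

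First I would apply Lemma \ref{lem:teclem} to each of the two integrals appearing in \eqref{eq:confr0}, using that $r_G$ satisfies R(1--3) so that it can play the role of $\rho$ in the statement of the lemma. Tracking the constants through the lemma yields positive numbers $A_1,A_2,B_1,B_2$ such that
\begin{equation*}
A_1 \, t^{1/2} e^{-B_1/t}\,(1+o(1)) \,\le\, F_G(t) \,\le\, A_2 \, t^{1/2} e^{-B_2/t}\,(1+o(1)) \qquad \text{as } t \to 0^+ .
\end{equation*}
The exponential rates $B_1,B_2$ depend on the constants $D_1,D_2$ from Proposition \ref{prop:GMconfr0} and on the slope $l_1$ from R2; there is no reason to expect $B_1=B_2$, so the envelopes above have genuinely different decay rates.

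Next I would argue by contradiction: suppose $F_G$ were regularly varying at $0$ with some index $\rho\in\R$. Using the lower envelope for the numerator and the upper envelope for the denominator one obtains, for every $\lambda>0$,
\begin{equation*}
\frac{F_G(\lambda t)}{F_G(t)} \,\ge\, \frac{A_1}{A_2}\,\lambda^{1/2}\,\exp\!\left(\frac{B_2 - B_1/\lambda}{t}\right)(1+o(1)).
\end{equation*}
By choosing $\lambda > B_1/B_2$ the exponent diverges to $+\infty$ as $t \to 0^+$, so the right-hand side blows up, contradicting the existence of a finite limit $\lambda^\rho$.

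The only point I expect to require some care is the explicit identification of $B_1$ and $B_2$ inside Lemma \ref{lem:teclem}, since the exponential constants produced by the lemma depend on both the integrand and on the local behaviour of $r_G$ encoded by R(1--3). What saves the argument is that we do not need the two envelopes to match: the contradiction only requires some $\lambda$ making the exponential blow up, and this is automatic as soon as $B_1,B_2>0$, which is guaranteed by $D_1,D_2,l_1>0$.
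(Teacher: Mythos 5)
Your proposal is correct and follows essentially the same route as the paper: apply Lemma \ref{lem:teclem} to the two integrals in the estimate of Proposition \ref{prop:GMconfr0} to sandwich $F_G$ between $K_1^1 t^{1/2}e^{-K_2^1/t}$ and $K_1^2 t^{1/2}e^{-K_2^2/t}$, form the ratio $F_G(at)/F_G(t)$ using the lower envelope in the numerator and the upper envelope in the denominator, and pick $a$ (your $\lambda$) larger than $K_2^1/K_2^2$ so the exponential $\exp\bigl((aK_2^2 - K_2^1)/(at)\bigr)$ diverges, contradicting the required finite limit $a^\rho$. The only cosmetic difference is that the paper spells out the intermediate functions $F_i$, $H_i$ and verifies $F_i/H_i\to 1$ before taking the ratio, whereas you fold that into the $(1+o(1))$ factors; the choice $a>\max\{1,K_2^1/K_2^2\}$ in the paper versus your $\lambda>B_1/B_2$ is the same condition.
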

\begin{proof}
From Proposition \ref{prop:GMconfr0} we know that there exists five constants $\delta,C_1,D_1,C_2,D_2$ such that for any $t \in [0,\delta]$ we have
\begin{equation*}
C_1\int_0^{r_G(t)}\frac{e^{-\frac{D_1}{s}}}{s^{\frac{3}{2}}}ds\le F_G(t)\le C_2\int_0^{r_G(t)}\frac{e^{-\frac{D_2}{s}}}{s^{\frac{3}{2}}}ds
\end{equation*}
Let us pose
\begin{equation*}
F_i(t)=\int_0^{r_G(t)}\frac{e^{-\frac{D_i}{s}}}{s^{\frac{3}{2}}}ds \ i=1,2
\end{equation*}
and observe that we can write for $t \in [0,\delta]$:
\begin{equation}\label{eq:nonvarpass1}
C_1F_1(t)\le F_G(t)\le C_2F_2(t).
\end{equation}
Fix now $a>1$ and observe that $\frac{\delta}{a}<\delta$, so that Eq. \eqref{eq:nonvarpass1} holds for any $t \in \left[0,\frac{\delta}{a}\right]$. Then for any $t \in \left[0,\frac{\delta}{a}\right]$ we also have
\begin{equation*}
C_1F_1(at)\le F_G(at)\le C_2F_2(at)
\end{equation*}
and then
\begin{equation*}
\frac{F_G(at)}{F_G(t)}\ge \frac{C_1}{C_2}\frac{F_1(at)}{F_2(t)}.
\end{equation*}
Since $r_G(t)$ satisfies hypotheses R(1-3), then by Lemma \ref{lem:teclem} we can find four constants $K_1^1,K_1^2,K_2^1,K_2^2$ such that posing:
\begin{equation*}
H_i(t)=K_1^it^{\frac{1}{2}}e^{-\frac{K_2}{t}} \ i=1,2
\end{equation*}
we have:
\begin{equation*}
\lim_{t \to 0^+}\frac{F_i(t)}{H_i(t)}=1.
\end{equation*}
We want to evaluate
\begin{equation}\label{eq:nonvarpass2}
\lim_{t \to 0^+}\frac{H_1(at)}{H_2(t)}=\lim_{t \to +\infty}\frac{K_1^1}{K_1^2}a^{\frac{1}{2}}e^{\frac{K_2^2}{t}-\frac{K_2^1}{at}}.
\end{equation}
Remarking that:
\begin{equation*}
\frac{K_2^2}{t}-\frac{K_2^1}{at}=\frac{1}{at}(aK_2^2-K_2^1)
\end{equation*}
one can choose $a>\max\left\{1,\frac{K_2^1}{K_2^2}\right\}$ to obtain
\begin{equation*}
\lim_{t \to 0^+}\frac{1}{at}(aK_2^2-K_2^1)=+\infty.
\end{equation*}
Using this result in Eq. \eqref{eq:nonvarpass2} we obtain that
\begin{equation*}
\lim_{t \to +\infty}\frac{H_1(at)}{H_2(t)}=+\infty.
\end{equation*}
Thus we can evaluate
\begin{equation*}
\lim_{t \to 0^+}\frac{C_1}{C_2}\frac{F_1(at)}{F_2(t)}=\lim_{t \to 0^+}\frac{C_1}{C_2}\frac{F_1(at)}{H_1(at)}\frac{H_2(t)}{F_2(t)}\frac{H_1(at)}{H_2(t)}=+\infty
\end{equation*}
and then by comparison
\begin{equation*}
\lim_{t \to 0^+}\frac{F_G(at)}{F_G(t)}=+\infty.
\end{equation*}
\end{proof}
Actually, we can show \mg{that $F_G(t)$ rapidly decays} at $0^+$.
\begin{prop}\label{prop:rapdec} 
Under the same hypotheses of Proposition \ref{prop:notregvar} $F_G(t)$ is rapidly decreasing at $0$.
\end{prop}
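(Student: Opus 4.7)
The plan is to combine the upper bound from Proposition \ref{prop:GMconfr0} with the asymptotic estimate provided by Lemma \ref{lem:teclem}, and then observe that the resulting envelope has the exponential-type decay that kills every polynomial. First, I would read off from \eqref{eq:confr0} that there exist $\delta, C_2, D_2 > 0$ such that for all $t \in [0,\delta]$
\begin{equation*}
0 \le F_G(t) \le C_2 \int_0^{r_G(t)} \frac{e^{-D_2/s}}{s^{3/2}} ds.
\end{equation*}
Since by hypothesis $r_G$ satisfies assumptions R(1--3), Lemma \ref{lem:teclem} (applied with $C = D_2$ and $\rho = r_G$) yields positive constants $K_1, K_2$ such that
\begin{equation*}
\int_0^{r_G(t)} \frac{e^{-D_2/s}}{s^{3/2}} ds \sim K_1 t^{1/2} e^{-K_2/t} \quad \text{as } t \to 0^+.
\end{equation*}

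Next, I would verify directly that $t \mapsto t^{1/2} e^{-K_2/t}$ is rapidly decreasing at $0^+$. Fix any $\alpha > 0$. Writing
\begin{equation*}
\frac{t^{1/2} e^{-K_2/t}}{t^\alpha} = t^{1/2-\alpha} e^{-K_2/t},
\end{equation*}
the substitution $u = 1/t$ gives $u^{\alpha-1/2} e^{-K_2 u} \to 0$ as $u \to +\infty$, so the limit as $t \to 0^+$ is $0$.

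Combining the two steps, for any $\alpha > 0$,
\begin{equation*}
0 \le \limsup_{t \to 0^+} \frac{F_G(t)}{t^\alpha} \le C_2 \lim_{t \to 0^+} \frac{K_1 t^{1/2} e^{-K_2/t}}{t^\alpha} = 0,
\end{equation*}
which proves that $F_G$ is rapidly decreasing at $0^+$. There is really no serious obstacle here: the heavy lifting has already been done in Proposition \ref{prop:GMconfr0} and Lemma \ref{lem:teclem}, and the only point to check is the trivial observation that the exponential factor $e^{-K_2/t}$ overwhelms every power of $t$ as $t \to 0^+$.
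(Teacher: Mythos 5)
Your proof is correct and follows essentially the same approach as the paper: bound $F_G$ from above via Proposition \ref{prop:GMconfr0}, replace the bound by its asymptotic envelope $K_1 t^{1/2}e^{-K_2/t}$ supplied by Lemma \ref{lem:teclem}, and observe that this envelope vanishes faster than any power of $t$. The only (harmless) difference is that the paper formally carries along both the lower and upper bounds from \eqref{eq:confr0} and squeezes, whereas you correctly noted that for rapid decay the trivial lower bound $F_G \ge 0$ suffices, which streamlines the argument.
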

\begin{proof}
With the same notation as in Proposition \ref{prop:notregvar} let us consider the functions $F_1$ and $F_2$ such that for $t \in [0,\delta]$:
\begin{equation*}
C_1F_1(t)\le F_G(t) \le C_2 F_2(t)
\end{equation*}
and fix $\alpha>0$. Observe that
\begin{equation}\label{eq:rapiddecaypass1}
C_1\frac{F_1(t)}{t^\alpha}\le \frac{F_G(t)}{t^\alpha} \le C_2\frac{F_2(t)}{t^\alpha}
\end{equation}
and define $H_i$ for $i=1,2$ as in Proposition \ref{prop:rapdec}. Let us first observe that:
\begin{equation*}
\lim_{t \to 0^+}\frac{H_i(t)}{t^\alpha}=\lim_{t \to 0^+}K_1^it^{\frac{1}{2}-\alpha}e^{-\frac{K_2^i}{t}}=0
\end{equation*}
and then we have
\begin{equation*}
\lim_{t \to 0^+}\frac{F_i(t)}{t^\alpha}=\lim_{t \to 0^+}\frac{F_i(t)}{H_i(t)}\frac{H_i(t)}{t^\alpha}=0.
\end{equation*}
Finally, by using the comparison theorem in Eq. \eqref{eq:rapiddecaypass1} we have $\lim_{t \to 0^+}\frac{F_G(t)}{t^\alpha}=0$.
\end{proof}

\section{A Neuronal Model}
\label{secmodel}
In this section we focus on an application of the results in Section \ref{sec:main} and \ref{sec:finmean} to obtain a model for the membrane potential of a neuron such that its firing times have some particular properties. Let us recall the Leaky Integrate-and-Fire (LIF for short) model introduced by Lapique in 1907 (see \cite{abbott1999lapicque}) in its stochastic version (see, for instance, \cite{greenwood2016stochastic}). Denote with $V(t)$ the membrane potential of a neuron at time $t$, $\theta>0$ the characteristic time of the membrane, $\hat{V} \in \R$ the resting potential, $I(t)$ a function representing the external stimulus and $\sigma>0$ a positive constant. Then $V(t)$ solves the following Stochastic Differential Equation:
\begin{equation}
dV(t)=\left(-\frac{1}{\theta}(V(t)-\hat{V})+I(t)\right)dt+\sigma dW(t) \ t>0=:T_0.
\end{equation}
First let us observe that if $I(t)\equiv0$, then $\E^x[V(t)] \to \hat{V}$, hence the name \textit{resting potential}. Moreover, let us consider a reset condition. Suppose we restarted the process from a \textit{reset position} $V_{reset}$ at time $T_{n-1}$ for the $n-1$-th time and fix a threshold $V_{th}>V_{reset}$. Define
\begin{equation}
T_n:=\inf\{t \ge T_{n-1}: \ V(t)\ge V_{th}\} \ n \ge 1,
\end{equation}
where $T_0=0$. Then we pose $V(T_n-)=V_{th}$ and $V(T_n)=V_{reset}$ and we \textit{reset} the SDE. This random time $T_n$ is called $n$-th spike time and the random time $T_{n}-T_{n-1}$ is called inter-spike interval (ISI for short). By definition, ISIs are supposed to be independent and identical distributed, which is a common assumption (see, for instance, \cite{tuckwell2005introduction}). From now on, let us fix the initial datum $V(0)=V_0 \in \R$. An example of sample path of such process can be seen in Figure \ref{fig:trajneumod1} on the left.\\
Since $V_0$ is fixed and $I(t)$ is a deterministic function, the process $V(t)$ without the reset mechanism is a Gaussian process with mean
\begin{equation}
m_V(t)=(1-e^{-\frac{t}{\theta}})\hat{V}+e^{- \frac{t}{\theta}}V_0+e^{-\frac{t}{\theta}}\int_0^t e^{\frac{s}{\theta}}I(s)ds
\end{equation}
and covariance
\begin{equation}
c_V(\tau,t)=u_V(\tau)v_V(t)
\end{equation}
where
\begin{align}
u_V(t)=\frac{\sigma \theta}{2}(e^{\frac{t}{\theta}}-e^{-\frac{t}{\theta}}), && v_V(t)=\sigma e^{-\frac{t}{\theta}}
\end{align}
which is the same covariance of an Ornstein-Uhlenbeck process. In particular the ratio is given by
\begin{equation}\label{eq:rV}
r_V(t)=\frac{\theta}{2}\left(e^{\frac{2t}{\theta}}-1\right).
\end{equation}
If we consider an Ornstein-Uhlenbeck process as a solution of
\begin{equation}
dU(t)=-\frac{1}{\theta}U(t)dt+\sigma dW(t), \ U(0)=0
\end{equation}
then we have from Eq. \eqref{eq:OUtschange}
\begin{align}
\rho_{V,U}(t)=t, && \varphi_{V,U}(t)=1.
\end{align}
Moreover, if we pose
\begin{equation}
S_U(t)=V_{th}-(1-e^{-\frac{t}{\theta}})\hat{V}-e^{- \frac{t}{\theta}}V_0-e^{-\frac{t}{\theta}}\int_0^t e^{\frac{s}{\theta}}I(s)ds
\end{equation}
and define
\begin{align}
T_1&:=\inf\{t \ge 0: \ V(t)\ge V_{th}\} \\
T_U&:=\inf\{t \ge 0: \ U(t)\ge S_U(t)\}
\end{align}
respectively with probability density functions $f_{T_1}(t)$ and $f_{U}(t)$ we have by Proposition \ref{thm:OUFPT}
\begin{equation}
f_{T_1}(t)=f_{U}(t).
\end{equation}
Moreover, if we denote with $f_{ISI}(t)$ the probability density function of an ISI, if $V_0=\hat{V}=V_{reset}$, then $f_{T_1}(t)=f_{ISI}(t)$.\\
Finally, by Corollary \ref{cor:FMCOU}, we obtain that if there exists a constant $K \in \R$ such that
\begin{equation}\label{eq:stimuluscond}
e^{-\frac{t}{\theta}}\int_0^t e^{\frac{s}{\theta}}I(s)ds>K
\end{equation}
and $I(t)$ is a $C^1$ function then $\E[T_1]<+\infty$. Let us observe that such hypothesis is not unrealistic: indeed it is satisfied, for instance, by any constant or excitatory stimulus.\\
However, in \cite{gerstein1964random} it has been shown that the exponential-like behaviour of the tails of $T_1$ is not sufficient to describe the ISI distribution. In particular, the authors refer to the fact that stable distributions for the ISI could be much more realistic then exponential ones. Two of the main features that lead the authors to consider stable distributions, together with the invariance under affine transformation, are the fact that the ISIs seemed to have an heavy-tail behaviour and that such behaviour is confirmed by the fact that their sample mean does not converge. Thus, we will now propose a modification of the LIF model that produces heavy-tailed ISIs. The idea is to consider a time-changed LIF model, in order to produce semi-Markov dynamics for the membrane potential. Semi-Markov models for theoretical neuroscience are not unrealistic and have already been considered (see, for instance, [\cite{tuckwell2005introduction}, Section $10.10$]).\\
Let us consider an $\alpha$-stable subordinator $\sigma_\alpha(t)$ and its inverse $L_\alpha(t)$. Thus, let us define the process $V_\alpha(t):=V(L_\alpha(t))$ (an example of its sample path is given in Figure \ref{fig:trajneumod1} on the right) and denote with $\fT_\alpha$ the random variable that represents the duration of an ISI. In particular, let us suppose that $V_0=\hat{V}=V_{reset}$, so that the first passage time of the non-restarted process $V_\alpha(t)$ represents such random variable. Thus, if condition \ref{eq:stimuluscond} is satisfied, since $C:=\E[T_1]<+\infty$, we have, by Corollary \ref{cor:corasymbeh}, that
\begin{equation}
\bP(\fT_\alpha>t)\sim \frac{C}{t^{\alpha}\Gamma(1-\alpha)} \mbox{ as }t \to +\infty
\end{equation}
so that the ISIs show an heavy-tailed behaviour. Moreover, recalling that $r_V(t)$ is given in Eq. \eqref{eq:rV} and it is a $C^2$ function such that $r_V(0)=0$ and $m_V(0)=\hat{V}<V_{th}$, then we have, from Proposition \ref{prop:rapdec}, that the probability density function $f_{T_1}$ of $T_1$ is rapidly decreasing at $0^+$. Now, it is easy to see that since the Levy measure of a stable subordinator of exponent $\alpha$ is given by $\nu_\alpha(dy)=y^{-\alpha-1}dy$, if $\bP(T_1 \le t)$ is infinitely differentiable, then, by Proposition \ref{prop:infdiff}, we know that $\bP(\fT_\alpha \le t)$ is infinitely differentiable. Moreover, if all the derivatives of $\bP(T_1 \le t)$ and $\bP(\fT_\alpha \le t)$ are of exponential order, then, by Theorem \ref{thm:rapiddecayzero}, also the probability density function $f_{\fT}$ of $\fT$ is rapidly decreasing at $0^+$. This is a physiological acceptable property, since we do not expect the neuron to fire almost instantaneously. This behaviour is evident in Figure \ref{fig:neumod}. In particular on the left one can see the different tails of $\fT_\alpha$ for different values of $\alpha$, while on the right one can see a comparison with the tails of $T_1$.
\begin{figure}[h]
\centering
\includegraphics[width=0.49\linewidth]{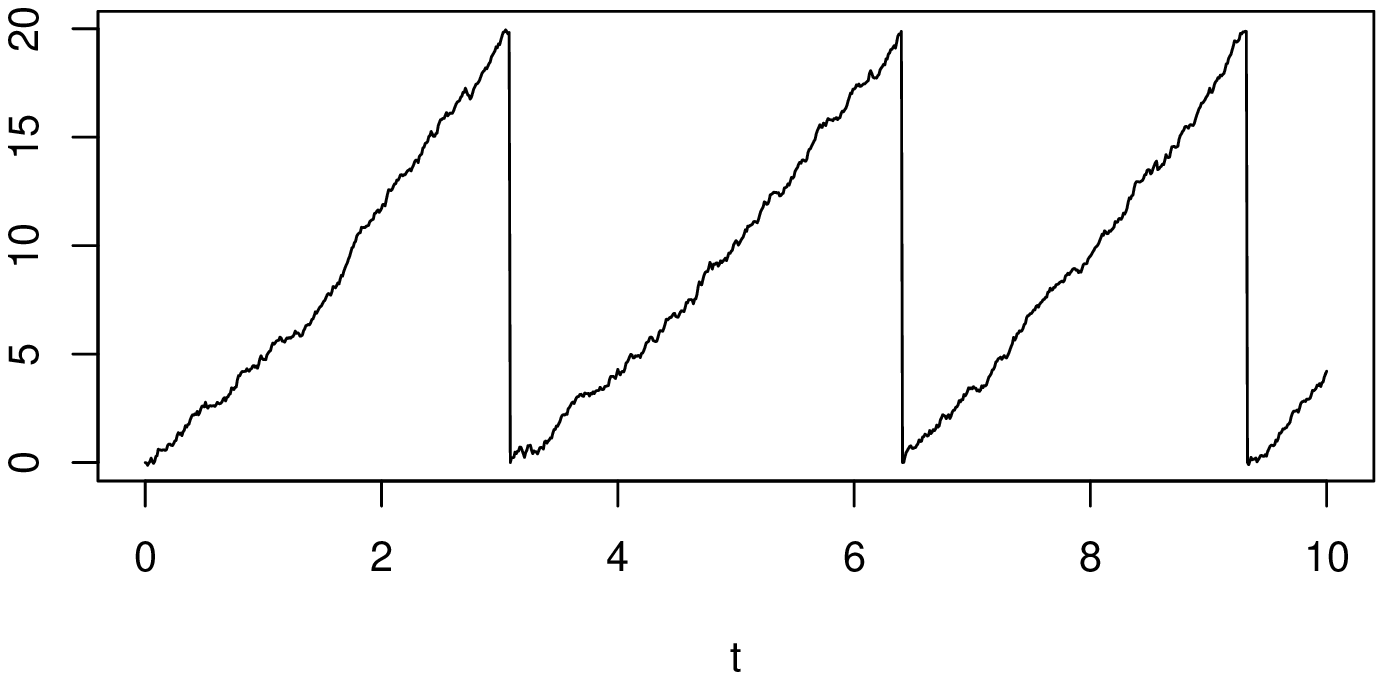}
\includegraphics[width=0.49\linewidth]{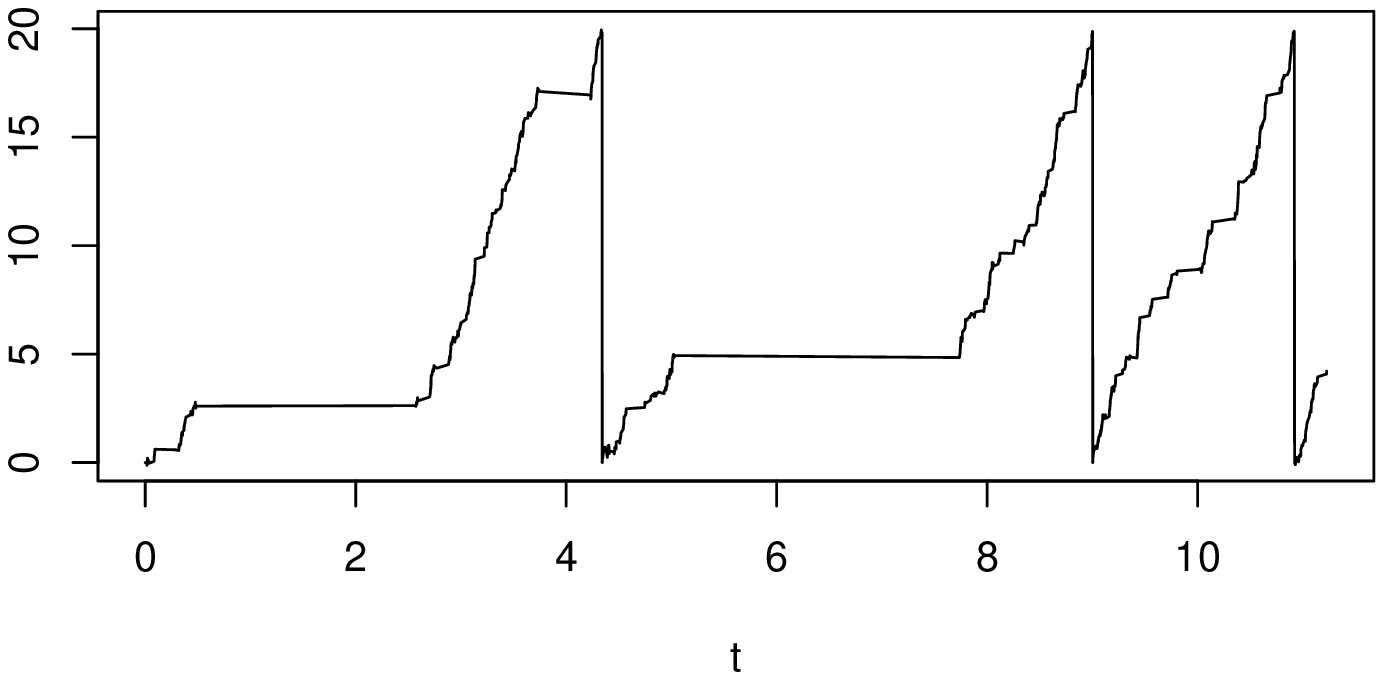}
\caption{Simulation of the neuronal model. On the left, a sample path of $V(t)$. On the right, the respective sample of $V_\alpha(t):=V(L(t))$. In particular, we setted $I(t)\equiv I_0=6$, $\sigma=1$, $V_0=\hat{V}=V_{reset}=0$, $V_{th}=20$, $\alpha=0.75$. Time steps for the simulation are $\Delta t=0.01$ and $\Delta y=0.01$.}
\label{fig:trajneumod1}
\end{figure}
\begin{figure}[h]
\centering
\includegraphics[width=0.49\linewidth]{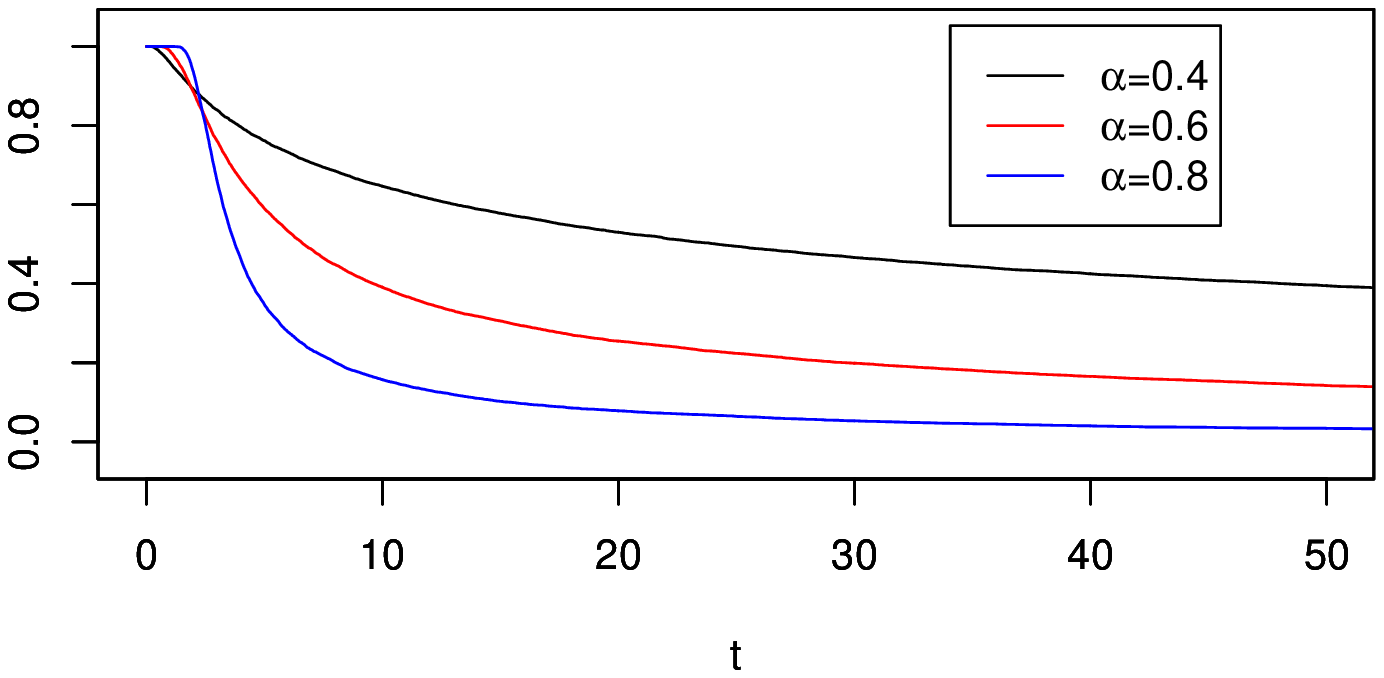}
\includegraphics[width=0.49\linewidth]{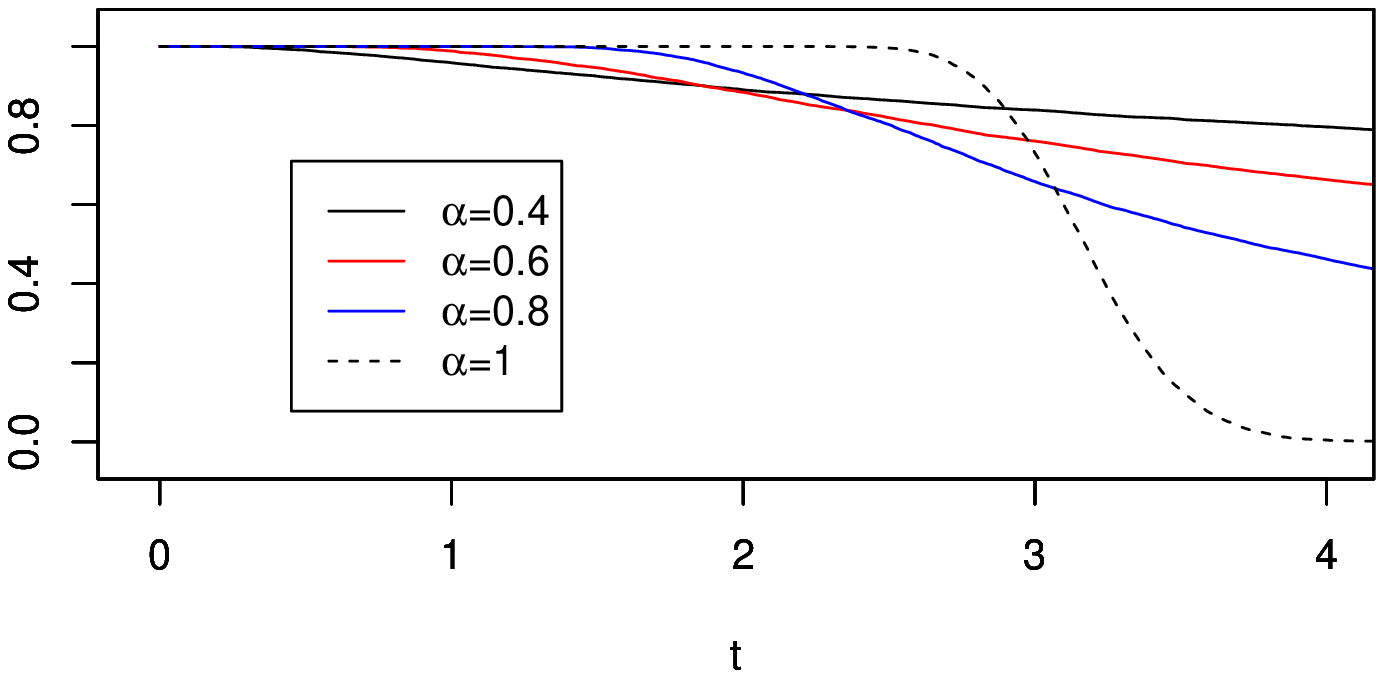}
\caption{Simulation of the neuronal model. On the left the function $\bP(\fT_\alpha >t)$ for different values of $\alpha$. On the right, the same plot zoomed in $[0,4]$, where the dashed line is the plot of the simulated function $\bP(T_1>t)$. We fixed $I(t)\equiv I_0=6$, $\sigma=1$, $V_0=\hat{V}=V_{reset}=0$ and $V_{th}=20$. Time steps for the simulation are $\Delta t=0.01$ and $\Delta y=0.01$ and $\bP(\fT_\alpha>t)$ is estimated by simulating $10000$ trajectories.}
\label{fig:neumod}
\end{figure}
\textcolor{black}{One could also take into account the process $N(t)$, which is the number of spikes of the neuron up to the time $t$ before the time change. It is a renewal process whose inter-jump times are i.i.d. random variables distributed as the first passage time $T$ of $V(t)$ through the threshold $V_{th}$. It is well known (see for instance \cite{buonocore2011first}) that if the stimulus is constant, $\bP(T<t)$ asymptotically behaves as an exponential, hence, for great jumps, $N(t)$ is similar to a Poisson process $P(t)$. If we consider the time changed process $V_\alpha(t)$ with its counting process $N_\alpha(t)$, then we can observe that $N_\alpha(t)=N(L_\alpha(t))$. Moreover, by using Proposition \ref{eq:exptoML}, we know that the inter-jump times $\fT$ are such that $\bP(\fT<t)$ asymptotically behaves as a Mittag-Leffler. Hence we could ask if we can approximate the process $N_\alpha(t)$ with a fractional Poisson process $P_\alpha(t)=P(L_\alpha(t))$. However, if we consider the asymptotic behavior at $0^+$ of $\bP(\fT<t)$, we have that, since $\bP(T<t)$ is rapidly decreasing at $0^+$ (by Prop. \ref{prop:rapdec}), also $\bP(\fT<t)$ is rapidly decreasing at $0^+$ (by Thm. \ref{thm:rapiddecayzero}) while the inter-jump times $J$ of a fractional Poisson process are such that $\bP(J<t)$ are regularly varying at $0^+$. Hence the approximation of the counting process $N_\alpha(t)$ with a fractional Poisson process works well for big values of the inter-jump times, while fails for small values of such times.}

\section{Simulation results}\label{sec:simul}
\bt{We provide in this section some techinques of stochastic simulation which may be used to verify the model. For thus we refer to Example \ref{ex:BMd}. Hence we first simulate} the process $W_\delta(t)=W(t)+\delta t$. It is well known (see, for instance, \cite{asmussen2007stochastic}) that such process (with initial datum $W_\delta(0)=0$) can be simulated by using a recursive scheme. Indeed, denoting with $\widetilde{W}_\delta$ the simulated process, if we consider a time step $\Delta t$, setting $t_n=n\Delta t$ for $n \in \N$, we have
\begin{equation}\label{eq:simBMdrift}
\begin{cases}
\widetilde{W}_\delta(0)=0\\
\widetilde{W}_\delta(t_n)=W_{\delta}(t_{n-1})+\delta \Delta t+\sqrt{\Delta t}Z_n & n \in \N
\end{cases}
\end{equation}
where $Z_n \sim \mathcal{N}(0,1)$ are independent and we pose
\begin{equation}
\widetilde{W}_\delta(t)=\widetilde{W}_\delta(t_{n-1}) \ \forall t \in [t_{n-1},t_n).
\end{equation}
To produce a time-changed Brownian motion with drift, we need then to simulate an inverse subordinator. Even in this case, if we can simulate a subordinator $\sigma$, then we can proceed with a recursive formula. Indeed, let us denote with $\widetilde{\sigma}$ and $\widetilde{L}$ respectively the simulated subordinator and the simulated inverse subordinator. Suppose $\widetilde{\sigma}$ has (discrete) state space $\widetilde{\Sigma}\subset [0,+\infty)$ and the time step of such process is $\Delta y$. Fix the time step for $\widetilde{L}$ as $\Delta t:=\min_{(x,y)\in \widetilde{\Sigma}^2}|x-y|$ and denote $y_m=m\Delta y$ for $m \in \N$ and $t_n=n\Delta t$ for $n \in \N$. Suppose we have simulated $\widetilde{L}(t_{n-1})$ and consider $M \in \N$ such that $y_M=\widetilde{L}(t_{n-1})$. Then we can simulate
\begin{equation}\label{eq:siminvsub}
\widetilde{L}(t_n)=\min\{y_m\ge y_M: \ \widetilde{\sigma}(y_m) \ge t_n\}.
\end{equation}
Now we need to establish how to simulate $\sigma$. First let us observe that for any $\Delta y$ we have $\sigma(y+\Delta y)-\sigma(y)\overset{d}{=}\sigma(\Delta y)$. Thus we have the recursive formula:
\begin{equation}
\begin{cases}
\widetilde{\sigma}(0)=0 \\
\widetilde{\sigma}(y_m)=\widetilde{\sigma}(y_{m-1})+\sigma(\Delta y).
\end{cases}
\end{equation}
\bt{Finally, we need simulate $\sigma(\Delta y)$. For this first fix a Laplace exponent $f$}. Thus we also know the Laplace transform of the variable $\sigma(\Delta y)$ given by $g(\lambda)=e^{-\Delta yf(\lambda)}$. Thus we have to simulate a random variable only knowing its Laplace transform. In such case, some simulation algorithms are given in \cite{devroye1981computer,devroye2013non} and compared in \cite{ridout2009generating}. Some of these methods require a numerical inversion of the Laplace transform, whose algorithms are discussed, for instance, in \cite{abate2000introduction}.\\
However, if $\sigma(t)$ is an $\alpha$-stable subordinator, one can use an ad-hoc simulation algorithm. In particular one has $\sigma(t)\overset{d}{=}t^{\frac{1}{\alpha}}\sigma(1)$, thus one has only to simulate a skew-symmetric $\alpha$-stable random variable $\sigma(1)$. For stable random variables $S \sim S(\alpha,\beta,\gamma,\delta;1)$ (here we use the notation in \cite{nolan2003stable}), one has a particular algorithm. First (see, for instance, \cite{asmussen2007stochastic}) for a variable $S \sim S(\alpha,0,1,0;1)$ we have that if $Y_1 \sim Exp(1)$ and $Y_2 \sim U\left(-\frac{\pi}{2},\frac{\pi}{2}\right)$, then
\begin{equation}
S\overset{d}{=}\frac{\sin(\alpha Y_2)}{(\cos(Y_2))^{\frac{1}{\alpha}}}\left(\frac{\cos((1-\alpha)Y_2)}{Y_1}\right)^{\frac{1-\alpha}{\alpha}}
\end{equation}
while for a general $S \sim S(\alpha,\beta,\gamma,\delta;1)$, if $S_1,S_2 \sim S(\alpha,0,1,0;1)$, then
\begin{equation}
S\overset{d}{=}\delta+\gamma\left(\frac{1+\beta}{2}\right)^{\frac{1}{\alpha}}S_1-\gamma\left(\frac{1-\beta}{2}\right)^{\frac{1}{\alpha}}S_2.
\end{equation}
To obtain a positive stable random variable (see, for instance, \cite{meerschaert2011stochastic}), we have to pose $S \sim S(\alpha,1,\gamma(\alpha),0;1)$ where
\begin{equation}
\gamma(\alpha)=\left(\cos\left(\frac{\pi\alpha}{2}\right)\right)^{\frac{1}{\alpha}}.
\end{equation}
However, to simulate stable random variables, we used the R package \textit{stabledist} (see \cite{wuertz6rmetrics}).\\
Thus, since we can simulate $W_\delta(t)$ and $L(t)$, we know how to simulate $X^f(t)=W_\delta(L(t))$ just by composing the simulation formulas (see, for instance, [\cite{meerschaert2011stochastic}, Example 5.21]). The same can be done for the standard Brownian motion by setting $\delta=0$.\\
For the first numerical experiment, we choose an $\alpha$-stable subordinator for $\alpha=0.7$, setted the drift coefficient $\delta=1$ and considered as open set $\mathcal{S}=(-\infty,1)$. We can see in Figure \ref{fig:stablealpha07bmd} on the left how the curves overlap. Denoting with $\fT_1$ the first exit time of $X^f_1(t):=W_1(L(t))$ from $\mathcal{S}$, since for the Brownian motion with drift we have that $\bP(\fT_1 >t)$ should have a power law decay, it could be interesting to study the convergence of 
$$RL_1(t):=\frac{\log(\bP(\fT_1>T))+\log(\Gamma(1-\alpha))}{\log(t)}$$
as $t \to +\infty$. Moreover, let us study also the convergence of 
$$R_1(t):=\frac{\bP(\fT_1>t)}{A_1(t)}$$
where 
$$A_1(t):=\frac{t^{-\alpha}}{\Gamma(1-\alpha)}.$$
In table \ref{tab:datasim} these values are shown for $t=25,50,75$: we can see that $RL_1(t)$ tends to $-0.75$ and $R_1(t)$ tends to $1$. For $t=75$, we have that only $110$ trajectories of our $10000$ simulated ones are such that $\fT_1>t$, so, since it is almost the $1\%$ of the trajectories, we can consider bigger values unreliable. The same numerical experiment has been repeated with $\delta=0$, obtaining the plot in figure \ref{fig:stablealpha07bmd} on the right. Denoting with $\fT_2$ the first exit time of $X^f_2(t):=W(L(t))$, let us consider the function $$R_2(t):=\frac{\bP(\fT_2>t)}{A_2(t)}$$
where 
$$A_2(t)=\frac{1}{\Gamma\left(1-\frac{\alpha}{2}\right)}[1-e^{-\sqrt{2t^{-\alpha}}}],$$
whose values for $t=25,50,75$ are shown in table \ref{tab:datasim}. Here, convergence is slower, since for $t=75$ we have $2050$ trajectories such that $\fT_2>t$, which is still a big number. We have also $R_2(100)=1.168963$ which is nearer to $1$, and for $t=100$ we have still $1812$ trajectories such that $\fT_2>t$.
\begin{figure}[h]
\centering
\includegraphics[width=0.49\linewidth]{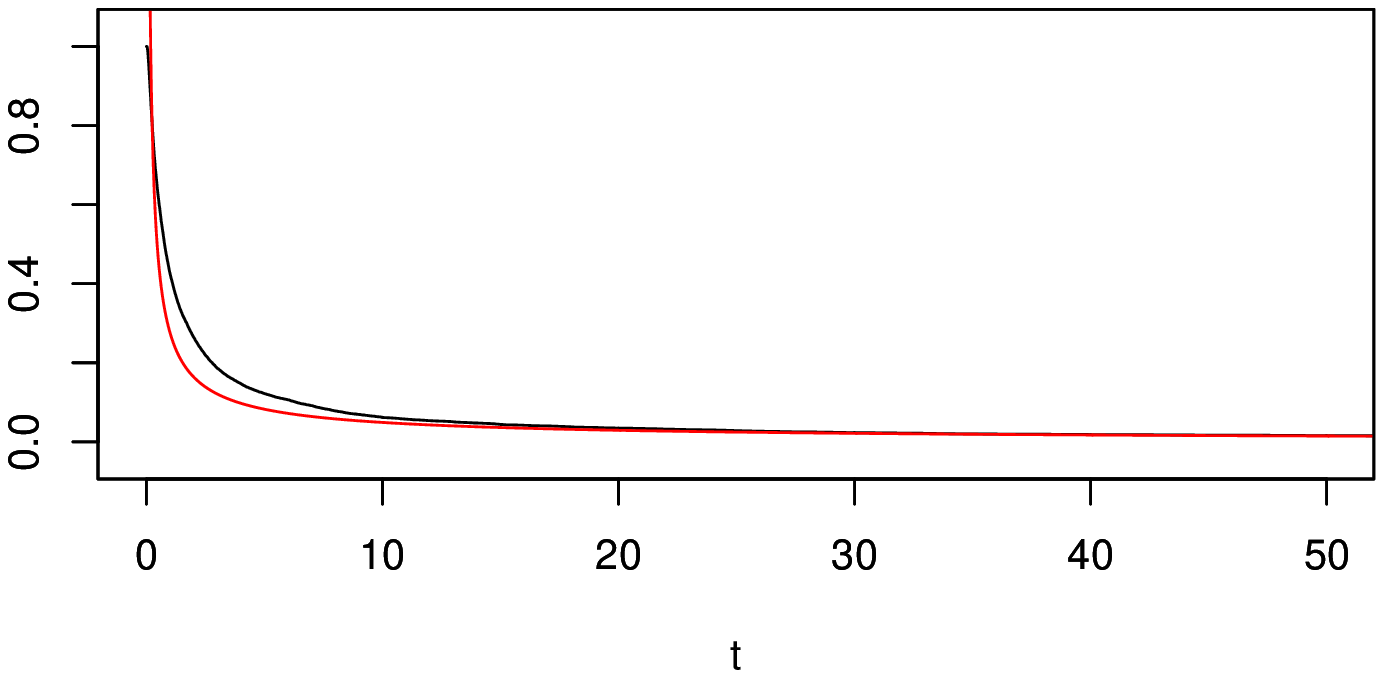}
\includegraphics[width=0.49\linewidth]{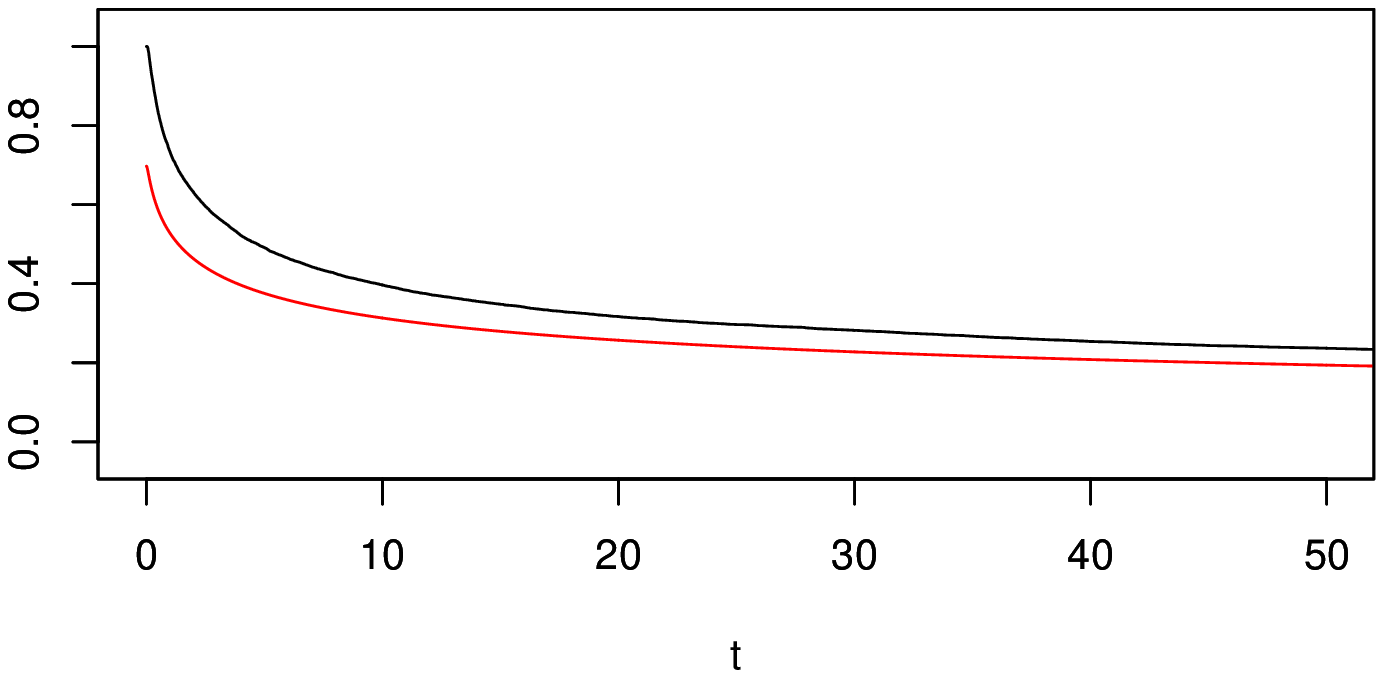}\caption{Numerical experiments. On the left: the curve in black is the plot of the simulated tail function $\bP(\fT_1 > t)$ for the first exit time $\fT_1$ of a time-changed Brownian motion with drift $X^f(t):=W_\delta(L(t))$ (where $L(t)$ is the inverse of an $\alpha$-stable subordinator) from an open set $\mathcal{S}=(-\infty,c)$, while the red line is the asymptotic estimate $\frac{c}{\delta t^\alpha \Gamma(1-\alpha)}$. On the right: the curve in black is the plot of the simulated tail function $\bP(\fT_2 > t)$ for the first exit time $\fT_2$ of a time-changed Brownian motion $X^f(t):=W(L(t))$ from the same open set $\mathcal{S}$, while the red line is the asymptotic estimate $\frac{1}{\Gamma\left(1-\frac{\alpha}{2}\right)}\left[1-e^{-c\sqrt{2t^{-\alpha}}}\right]$. In particular $c=1$, $\delta=1$ and $\alpha=0.75$. The simulation steps are $\Delta t=\Delta y=0.01$ and the estimate of $\bP(\fT_i>t), \ i=1,2$ has been done with $10000$ trajectories.}
\label{fig:stablealpha07bmd}
\end{figure}
\begin{table}[h]\label{tab:datasim}
\begin{tabular}{|l|l|l|l|}
\hline
$t$ & $25$         & $50$         & $75$         \\\hline
$RL_1(t)$               & $-0.7084494$ & $-0.7326317$ & $-0.7462296$ \\\hline
$R_1(t)$                & $1.14276$    & $1.070146$                       & $1.01631$    \\\hline
$R_2(t)$                & $1.233939$   & $1.220551$                       & $1.20361$   \\\hline
\end{tabular}
\caption{Values of the function $RL_1(t)$, $R_1(t)$ and $R_2(t)$.}
\end{table}
\appendix
\mg{\section{Proofs from Section \ref{sec:finmean}}\label{app:A}
\subsection{Proof of Theorem \ref{thm:GMtrans}}
By using Doob's Transformation Theorem there is a Wiener process $W(t)$ such that
\begin{equation}\label{eq:passtransteo0}
G_i(t)=m_{G_i}(t)+v_{G_i}(t)W(r_{G_i}(t)), \ i=1,2
\end{equation}
in law.
Then, considering the previous equation for $i=2$ we have
\begin{equation}\label{eq:passtransteo1}
W(r_{G_2}(t))=\frac{G_2(t)-m_{G_2}(t)}{v_{G_2}(t)}.
\end{equation}
Since $r_{G_2}(t)$ is continuous and strictly increasing, it is invertible; moreover, since $\dot{r}_{G_2}(t)\not = 0$ for all $t \ge 0$, $r_{G_2}^{-1}$ is a $C^1$ function. From equation \eqref{eq:passtransteo1} we have
\begin{equation*}
W(t)=\frac{G_2(r_{G_2}^{-1}(t))-m_{G_2}(r_{G_2}^{-1}(t))}{v_{G_2}(r_{G_2}^{-1}(t))}
\end{equation*}
and then, by definition of $\rho_{G_1,G_2}(t)$
\begin{equation}\label{eq:passtransteo2}
W(r_{G_1}(t))=\frac{G_2(\rho_{G_1,G_2}(t))-m_{G_2}(\rho_{G_1,G_2}(t))}{v_{G_2}(\rho_{G_1,G_2}(t))}.
\end{equation}
Finally, by substituting Eq. \eqref{eq:passtransteo2} in \eqref{eq:passtransteo0} for $i=1$ we obtain
\begin{equation}
G_1(t)=m_{G_1}(t)-\frac{v_{G_1}(t)}{v_{G_2}(\rho_{G_1,G_2}(t))}m_{G_2}(\rho_{G_1,G_2}(t))+\frac{v_{G_1}(t)}{v_{G_2}(\rho_{G_1,G_2}(t))}G_2(\rho_{G_1,G_2}(t))
\end{equation}
that is Eq. \eqref{eq:GMtrans} by definition of $\varphi_{G_1,G_2}(t)$.
\subsection{Proof of Proposition \ref{prop:GMconfr0}}
Let us suppose for simplicity $v_G(0)>0$. Consider a Wiener process $W(t)$ and define
\begin{equation*}
S_W(t)=\frac{S_G(t)-m_G(t)}{v_G(t)}
\end{equation*}
remarking that $S_W(0)>0$ and $S_W(t)$ is a continuous function in $[0,+\infty)$. Let us fix $\alpha$ such that $0<\alpha<S_W(0)$. Since $S_W$ is a continuous function there exists a $\delta>0$ such that $S_W(t)>\alpha>0$ for any $t \in [0,\delta]$. Now define:
\begin{equation*}
\widetilde{S}_W(t)=S_W(r_G^{-1}(t))
\end{equation*}
and
\begin{equation*}
T_W=\inf \{t>0 \ W(t)>S_W(t)\}
\end{equation*}
with probability density function $f_W(t)$ and distribution function $F_W(t)$. Consider $f_G(t)$ the probability density function of $T_G$. Thus by Proposition \ref{prop:fptWiener} we have
\begin{equation*}
f_G(t)=\dot{r}_G(t)f_W(r_G(t))
\end{equation*}
and then by integrating
\begin{equation*}
F_G(t)=\int_0^tf_G(s)ds=\int_0^t\dot{r}_G(s)f_W(r_G(s))ds.
\end{equation*}
By using the change of variable $z=r_G(s)$ we obtain:
\begin{equation}\label{eq:FGeqFW}
F_G(t)=\int_0^tf_G(s)ds=\int_0^{r_G(t)}f_W(z)dz=F_W(r_G(t)).
\end{equation}
Since $r_G$ is continuous and strictly increasing in $[0,\delta]$ then $r_G^{-1}$ is continuous (see for instance \cite{garling2014course}) and strictly increasing in $[0,r_G(\delta)]$. Thus we have that $\widetilde{S}_W(t)$ is a continuous function in $[0,r_G(\delta)]$. Moreover, since $r_G^{-1}(0)=0$, then $\widetilde{S}_W(0)=S_W(0)>0$ and, by definition of $\delta$, $\widetilde{S}_W(t)>\alpha>0$ for any $t \in [0,r_G(\delta)]$. Let us define
\begin{align*}
S_{min}=\min_{[0,r_G(\delta)]}\widetilde{S}_W(t) && 
S_{max}=\max_{[0,r_G(\delta)]}\widetilde{S}_W(t)
\end{align*}
and
\begin{align*}
T^{min}_W=\inf\{t>0: \ W(t)>S_{min}\} && 
T^{max}_W=\inf\{t>0: \ W(t)>S_{max}\}
\end{align*}
respectively with distribution functions $F_{min}(t)$ and $F_{max}(t)$. By definition of $S_{min}$ and $S_{max}$ we have 
\begin{equation*}
T^{min}_W \wedge r_G(\delta) \le T_W \wedge r_G(\delta) \le T^{max}_W \wedge r_G(\delta)
\end{equation*}
and thus, defining $\widetilde{F}_{min}(t)$, $\widetilde{F}_{W}(t)$, $\widetilde{F}_{max}(t)$ the distribution functions respectively of $T^{min}_W \wedge r_G(\delta)$, $T_W \wedge r_G(\delta)$ and $T^{max}_W \wedge r_G(\delta)$ we have
\begin{equation*}
\widetilde{F}_{max}(t)\le \widetilde{F}_{W}(t)\le \widetilde{F}_{min}(t).
\end{equation*}
For $t\le r_G(\delta)$ we have
\begin{equation*}
\widetilde{F}_{W}(t)=\bP(T_W \wedge r_G(\delta)\le t)=\bP(T_W \le t)=F_W(t)
\end{equation*}
and in a similar way we have $\widetilde{F}_{max}(t)=F_{max}(t)$ and $\widetilde{F}_{min}(t)=F_{min}(t)$. Thus we obtain for any $t \in [0,r_G(\delta)]$
\begin{equation*}
F_{max}(t)\le F_{W}(t)\le F_{min}(t).
\end{equation*}
For this reason we have for any $t \in [0,\delta]$
\begin{equation*}
F_{max}(r_G(t))\le F_{W}(r_G(t))\le F_{min}(r_G(t))
\end{equation*}
and then, by using Eq. \eqref{eq:FGeqFW}
\begin{equation*}
F_{max}(r_G(t))\le F_{G}(t)\le F_{min}(r_G(t)).
\end{equation*}
But since $\widetilde{S}_W(t)>\alpha>0$ for any $t \in [0,r_G(\delta)]$, $S_{max}\ge S_{min}>\alpha>0$ and then we have
\begin{align*}
F_{max}(r_G(t))=\frac{S_{max}}{\sqrt{2\pi}}\int_0^{r_G(t)}\frac{e^{-\frac{S^2_{max}}{2s}}}{s^{\frac{3}{2}}}ds && F_{min}(r_G(t))=\frac{S_{min}}{\sqrt{2\pi}}\int_0^{r_G(t)}\frac{e^{-\frac{S^2_{min}}{2s}}}{s^{\frac{3}{2}}}ds.
\end{align*}
Finally, posing:
\begin{align*}
C_1=\frac{S_{max}}{\sqrt{2\pi}} && D_1=\frac{S_{max}}{2}\\
C_2=\frac{S_{min}}{\sqrt{2\pi}} && D_2=\frac{S_{min}}{2}
\end{align*}
we obtain Eq. \eqref{eq:confr0}.
\subsection{Proof of Lemma \ref{lem:teclem}}
Let us remark that by definition $F$ is a differentiable function with derivative
\begin{equation*}
f(t)=\dot{\rho}(t)(\rho(t))^{-\frac{3}{2}}e^{-\frac{C}{\rho(t)}}.
\end{equation*}
Let us define for some constant $\widetilde{C}$
\begin{equation*}
g(t)=\widetilde{C}t^{-\frac{3}{2}}e^{-\frac{C}{l_1t}}.
\end{equation*}
We want to fine a constant $\widetilde{C}$ such that:
\begin{equation*}
\lim_{t \to 0^+}\frac{f(t)}{g(t)}=1
\end{equation*}
that is to say:
\begin{equation}\label{eq:lemconfr1}
\lim_{t \to 0^+}\frac{1}{\widetilde{C}}\dot{\rho}(t)\left(\frac{\rho(t)}{t}\right)^{-\frac{3}{2}}e^{C\left(\frac{1}{l_1t}-\frac{1}{\rho(t)}\right)}=1.
\end{equation}
To do this, let us first observe that by hypotheses R1 and R2:
\begin{equation}\label{eq:pass1zero}
l_1=\lim_{t \to 0^+}\frac{\rho(t)}{t}=\lim_{t \to 0^+}\dot{\rho}(t).
\end{equation}
Moreover we have:
\begin{equation*}
\frac{1}{l_1t}-\frac{1}{\rho(t)}=\frac{\rho(t)-l_1t}{t_1t\rho(t)}=\frac{\rho(t)-l_1t}{t^2}\frac{t}{\rho(t)}\frac{1}{l_1}
\end{equation*}
and then by hypotheses R2 and R3 we have
\begin{equation}\label{eq:pass2zero}
\lim_{t \to 0^+}\frac{1}{l_1t}-\frac{1}{\rho(t)}=\frac{l_2}{l_1^2}.
\end{equation}
Using Eqs. \eqref{eq:pass1zero} and \eqref{eq:pass2zero} in Eq. \eqref{eq:lemconfr1} we obtain
\begin{equation*}
\lim_{t \to 0^+}\frac{f(t)}{g(t)}=\frac{l_1^{-\frac{1}{2}}e^{\frac{Cl_2}{l_1^2}}}{\widetilde{C}}
\end{equation*}
and thus we have the condition
\begin{equation*}
\widetilde{C}=l_1^{-\frac{1}{2}}e^{\frac{Cl_2}{l_1^2}}>0.
\end{equation*}
Now let us define for some constants $K_1,K_2$
\begin{equation*}
H(t)=K_1t^{\frac{1}{2}}e^{-\frac{K_2}{t}}
\end{equation*}
with derivative
\begin{equation*}
h(t)=K_1e^{-\frac{K_2}{t}}\left(\frac{t^{-\frac{1}{2}}}{2}+K_2t^{-\frac{3}{2}}\right).
\end{equation*}
Let us first pose $K_2=\frac{C}{l_1}>0$ and observe that with such position we can write
\begin{equation*}
g(t)=\widetilde{C}t^{-\frac{3}{2}}e^{-\frac{K_2}{t}}.
\end{equation*}
We want to find $K_1$ such that:
\begin{equation*}
\lim_{t \to 0^+}\frac{g(t)}{h(t)}=1.
\end{equation*}
In this case we have
\begin{equation*}
1=\lim_{t \to 0^+}\frac{g(t)}{h(t)}=\lim_{t \to 0^+}\frac{\widetilde{C}}{K_1\left(\frac{t}{2}+K_2\right)}=\frac{\widetilde{C}}{K_1K_2}
\end{equation*}
and then we obtain the condition
\begin{equation*}
K_1=\frac{\widetilde{C}}{K_2}>0.
\end{equation*}
Finally let us observe that
\begin{equation*}
\lim_{t \to 0^+}F(t)=0=\lim_{t \to 0^+}H(t)
\end{equation*}
and then by using l'Hopital's rule we have
\begin{equation*}
\lim_{t \to 0^+}\frac{F(t)}{H(t)}=\lim_{t \to 0^+}\frac{f(t)}{h(t)}=\lim_{t \to 0^+}\frac{f(t)}{g(t)}\frac{g(t)}{h(t)}=1.
\end{equation*}}

\bibliographystyle{plain}

\end{document}